\documentclass[11pt, letterpaper]{amsart}
\pdfoutput=1
\usepackage{amssymb,latexsym, xcolor
}
\usepackage{amsthm,amssymb}
\usepackage[dvips, hcentering,
includehead,width=14.2cm,
top=0.5cm,
height=21.6cm,
paperheight=23.8cm,paperwidth=17cm
]{geometry}

\usepackage{hyperref}
\usepackage{enumitem}

\usepackage{tikz-cd}
\usetikzlibrary{decorations.markings} 

\usepackage{mathtools}

\usepackage{ccaption}
\changecaptionwidth
\captionwidth{5.6in}


\usepackage{bm}

\setlength{\abovecaptionskip}{3pt} 
\setlength{\intextsep}{3.0pt plus 1.0pt minus 2.0pt} 

\setitemize{leftmargin=.2in}
\setenumerate{leftmargin=.3in}

\newcommand{\cS}[1]{{\noindent\textsf{\color{purple}$\blacksquare$~#1~$\blacksquare$}}}
\newcommand{\hide}[1]{}

\newtheorem{theorem}{Theorem}[section]
\newtheorem{proposition}[theorem]{Proposition}

\newtheorem{corollary}[theorem]{Corollary}
\newtheorem{lemma}[theorem]{Lemma}

\theoremstyle{definition}

\newtheorem{remark}[theorem]{Remark}

\newtheorem{example}[theorem]{Example}
\newtheorem{definition}[theorem]{Definition}
\newtheorem{problem}[theorem]{Problem}

\numberwithin{equation}{section}

\def\ZZ{{\mathbb Z}}

\def\wind{{\operatorname{wind}}}

\def\GL{{\operatorname{GL}}}
\def\In{{\operatorname{In}}}
\def\Out{{\operatorname{Out}}}

\newcommand\sgn{\operatorname{sgn}}

\def\tr{{\operatorname{tr}}}

\newcommand{\boxQ}{\overset{\hspace*{0.4pt}\scalebox{0.4}{$\bm{\Box}$}}{Q}} 

\newcommand{\Qinit}{Q_0}
\newcommand{\Qtmut}{Q_t}
\newcommand{\Lgraph}{L}

\medmuskip=3mu plus 1mu minus 3mu
\thickmuskip=4mu plus 2mu minus 3mu

\title{Mutation-acyclic quivers are totally proper}

\begin{document}

\author{Scott Neville}
\address{\hspace{-.3in} Department of Mathematics, University of Michigan,
Ann Arbor, MI 48109, USA}
\email{nevilles@umich.edu}

\date{\today}

\thanks{Partially supported by NSF grants DMS-1840234, DMS-2054231, DMS-2348501.
}

\subjclass{
Primary
13F60, 
Secondary
05E16, 
15B36. 
}

\keywords{Quiver, mutation, cyclic ordering, Alexander polynomial.}

\begin{abstract}
Totally proper quivers, introduced by S.~Fomin and the author \cite{COQ}, have many useful properties including powerful mutation invariants.
We show that every mutation-acyclic quiver (i.e., a quiver that is mutation equivalent to an acyclic one) is totally proper.
This yields new necessary conditions for a quiver to be mutation-acyclic.
In particular, we show that a generalization of the Markov invariant for $3$-vertex quivers applies to all mutation-acyclic quivers.
Only finitely many acyclic quivers share the same Markov invariant.
%
%
%
\end{abstract}

\maketitle



\thispagestyle{empty}

\section{Introduction}
\emph{Quivers} are finite directed graphs without oriented cycles of length $1$ or $2$. 
\emph{Mutations} are operations that transform a quiver, based on a choice of a vertex.
These notions are foundational in the study of cluster algebras~\cite{CA1}.
A \emph{mutation invariant} is a characteristic of a quiver that is preserved under mutations.
Mutation invariants are helpful for deciding whether two quivers are mutation equivalent or not,
i.e., whether there is a sequence of mutations that transforms one quiver into the other.
One example is the \emph{Markov invariant} of a $3$-vertex quiver, which essentially characterizes the $3$-vertex quivers which are \emph{mutation-acyclic} (i.e., those quivers which are  mutation equivalent to an acyclic quiver) \cite{BBH}.
See \cite{casals-binary, ervinPrefork, CAtextbook1-3, Seven3x3, Seven-congruence} for additional examples of known mutation invariants. 

A \emph{cyclically ordered quiver} (COQ) is a pair $(Q, \sigma)$ where $Q$ is a quiver and~$\sigma$ a cyclic ordering of its vertices.
Cyclically ordered quivers were introduced in~\cite{COQ} to develop new powerful mutation invariants. 
A~mutation in a COQ $(Q, \sigma)$ transforms~$Q$ by the usual mutation rule,
while simultaneously changing the cyclic ordering~$\sigma$ in a prescribed way.
We note that mutations of COQs are only allowed at the vertices that satisfy a certain \emph{properness} condition. 
It is this condition that ultimately enables the introduction of new mutation invariants. 


In this paper, we prove that for one important class of quivers, the properness requirement can be lifted, 
so that mutations at all vertices are allowed and all invariants developed in~\cite{COQ} become true mutation invariants.
Specifically, we show that in any mutation-acyclic quiver~$Q$, 
every vertex~$v$ is proper for a particular canonical cyclic ordering~$\sigma$ on~$Q$. 
After a mutation at~$v$, we obtain a new COQ $(Q',\sigma')$, with canonical cyclic ordering~$\sigma'$,
that again has the same property: all the vertices are proper, so we can mutate at any one of them, 
and the process continues. 

This theorem yields new mutation invariants of mutation-acyclic quivers, 
as well as new tools for proving that various quivers are not mutation-acyclic. 
One such quiver is shown in Figure~\ref{fig:Extended8}.
We give a short proof that this quiver is not mutation-acyclic in Example~\ref{eg:extended not acyclic}.


\begin{figure}[ht]
\begin{center}
\includegraphics[alt={A graph with ten vertices and 18 arrows, arranged into an equalateral triangular grid. There are four vertices on each side, with one interior vertex, thus there are nine equalateral triangular faces (each supported by three vertices) which cover the whole. Each face has the arrows oriented cyclically, directed counter-clockwise.}]{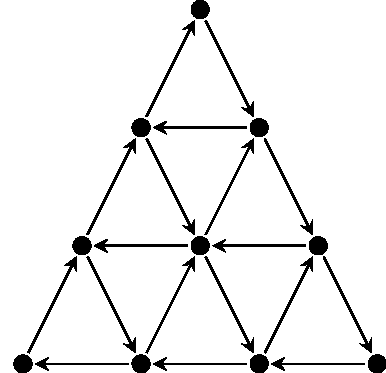}
\vspace{3pt}
\end{center}
\caption{An (unlabeled) triangular grid quiver with $4$ vertices on each side. 
Readers may recognize it as the default quiver in B.~Keller's mutation applet \cite{KellerApp}.}
\vspace{-15pt}

\label{fig:Extended8}
\end{figure}

As mentioned above, mutation at a vertex~$v$ in a COQ $(Q,\sigma)$
is only allowed if $v$ satisfies a combinatorial condition of ``properness.'' 
Informally, $v$~is proper if every 2-arrow oriented path through~$v$
travels ``clockwise,'' i.e., in the direction of the cyclic ordering~$\sigma$. 
A COQ is \emph{proper} if every vertex~$v$ in it is proper 
(possibly after applying a sequence of transpositions called \emph{wiggles}). 
Thus, we can mutate at any vertex in a proper COQ $(Q,\sigma)$ 
and get a new COQ $(Q',\sigma')$---but $(Q',\sigma')$ will not necessarily be proper.
If it is the case that applying any sequence of mutations to $(Q,\sigma)$ results in a proper COQ, 
then we say that $(Q,\sigma)$ is \emph{totally proper}.

We have shown in  \cite[Theorem~14.3]{COQ} that 
a quiver $Q$ can be upgraded to a totally proper COQ $(Q,\sigma)$ 
in at most one way (up to wiggles).
Moreover, the candidate cyclic ordering $\sigma=\sigma_Q$ can be constructed efficiently \cite[Algorithm~14.12]{COQ}.
We say that a quiver $Q$ is totally proper if the COQ $(Q, \sigma_Q)$ is totally proper.

As shown in \cite[Corollary 14.2]{COQ}, 
totally proper quivers have a powerful mutation invariant, which we recall next.
This invariant is constructed as follows. 

\smallskip

\noindent
\underline{Input}: a totally proper quiver $Q$. \\
\underline{Step~1}. Construct the canonical cyclic ordering $\sigma=\sigma_Q$. \\
\underline{Step~2}. ``Tear'' $\sigma$ into a linear ordering $<\,$. \\
\underline{Step~3}. Construct the skew-symmetric exchange matrix~$B=B_Q$, with rows and columns ordered according to~$<$. \\
\underline{Step~4}. Construct the \emph{unipotent companion}~$U$, the unique 
unipotent upper-triangular matrix such that $B=U^T - U$. \\
\underline{Output}: the \emph{integral congruence class} of $U$, i.e., 
the set $\{G U G^T \mid G \in \GL_n(\ZZ)\}$.

\smallskip

We have shown in \cite{COQ} that wiggles, cyclic shifts of the linear ordering, 
and (proper) mutations of COQs preserve the integral congruence class of the unipotent companion. \linebreak[3]
While the resulting invariant of proper mutations is very powerful,
it is not easy to use in practice, since 
the problem of deciding whether two upper-triangular matrices 
are congruent over $\GL_n(\ZZ)$ seems to be difficult.  
In~\cite{COQ}, we bypassed this difficulty as follows. 
It is well known (and easy to see) that the integral congruence class of a matrix $U\in \GL_n(\ZZ)$
uniquely determines the conjugacy class of its \emph{cosquare}~$U^{-T} U$.
It follows that whenever two COQs are related by proper mutations,
the cosquares of their respective unipotent companions must be conjugate in~$\GL_n(\ZZ)$. 
This conjugacy condition can be verified efficiently,
though the algorithm is nontrivial \cite{BHJ-opensource, EHO-magma}.

The cosquare $U^{-T} U$ can be used to construct other invariants of proper mutations, 
such as the monic characteristic polynomial of $U^{-T} U$, 
which we call the \emph{Alexander polynomial} of the COQ~$Q$.
(While much simpler to compute, the Alexander polynomial is less powerful
than the conjugacy class of $U^{-T} U$:
integer matrices may have the same characteristic polynomial while 
not being conjugate in~$\GL_n(\ZZ)$.) 

\pagebreak[2]

The main result of this paper is the following theorem \cite[Conjecture~15.9]{COQ}. 
\begin{theorem}
\label{thm:acyclic totally proper}
Every mutation-acyclic quiver is totally proper. 
\end{theorem}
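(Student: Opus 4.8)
The plan is to reduce the statement to a claim about acyclic quivers, and then to prove that claim by propagating a normal form along mutations. First, observe that being totally proper is a property of an entire mutation orbit of COQs: if a COQ $(Q_0,\sigma_0)$ is totally proper then so is every COQ obtained from it by mutations, since any mutation sequence out of such a COQ extends a mutation sequence out of $(Q_0,\sigma_0)$, and every vertex of every COQ in a totally proper orbit is proper (after wiggles), so all these mutations are legal. Hence it suffices to show: for every acyclic quiver $Q_0$, equipping it with the cyclic ordering $\sigma_0$ induced by a topological linear ordering of its vertices produces a \emph{totally proper} COQ $(Q_0,\sigma_0)$. Granting this, any $Q$ mutation equivalent to $Q_0$ admits a cyclic ordering making it totally proper, which must be $\sigma_Q$ by the uniqueness in \cite[Theorem~11.3]{COQ}, so $Q$ is totally proper. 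Note that $(Q_0,\sigma_0)$ is at least proper for free: a topological ordering has no counterclockwise $2$-arrow paths.

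The substance is therefore to show that, for every finite mutation sequence $\mu$, the COQ $\mu(Q_0,\sigma_0)$ is proper; I would prove this by induction on the length of $\mu$. Properness alone is too weak to carry through the induction, so I would strengthen the hypothesis: track the unipotent companion $U$ of the current COQ together with an explicit $\GL_n(\ZZ)$-congruence $U = G\,U_0\,G^{T}$, where $U_0$ is the companion of $Q_0$ (equivalently the Euler matrix of the corresponding hereditary algebra) and the rows of $G$ (essentially the $c$-vectors of the mutated seed relative to $Q_0$) are sign-coherent integer vectors, each $\pm$ a real root of the root system of $Q_0$, arranged so that their linear order is compatible with the torn ordering underlying $\sigma$. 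Call such a COQ \emph{of acyclic type}. The inductive step is the assertion that a mutation at any (automatically proper) vertex $v$ of an acyclic-type COQ produces an acyclic-type COQ; and the normal form should be set up precisely so that being of acyclic type forces properness. Verifying the inductive step means meshing three transformations: the quiver mutation rule at $v$, the prescribed transformation $\sigma\mapsto\sigma'$ of the cyclic ordering (modulo allowed wiggles), and the $c$-vector/companion mutation. The positivity engine is sign-coherence together with the fact that each $c$-vector here is $\pm$ a real root of $Q_0$; this pins down where the new arrows created at $v$ lie relative to $\sigma'$, forcing every $2$-arrow path through the reordered vertices to run clockwise.

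The main obstacle, and where I expect the real work to be, is exactly that meshing: proving that the mutation rule for cyclic orderings is \emph{precisely} compatible with the root-theoretic bookkeeping, so that any $2$-path $u\to v\to w$ that is created or reoriented by the mutation corresponds to a triple of $c$-vectors whose relative cyclic positions are controlled — via the induction hypothesis — by the Euler form of $Q_0$ on three consecutive $c$-vectors, an inequality that should follow from their being almost-positive real roots of a hereditary algebra. In particular one must rule out the single ``bad'' configuration in which a $2$-path wraps the wrong way around $\sigma'$. A subsidiary technical nuisance is to handle wiggle equivalence uniformly, upgrading ``proper after wiggles'' to a canonical representative compatible with the root order at every stage, so that the normal form is genuinely preserved rather than merely preserved up to an unspecified wiggle. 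Once the theorem is in hand, the advertised consequences are immediate: the $\GL_n(\ZZ)$-congruence class of the unipotent companion, the conjugacy class of its cosquare, and the Alexander polynomial all become honest mutation invariants on the mutation-acyclic class, and since the Alexander polynomial of an acyclic quiver is (after the substitution $\lambda\mapsto-\lambda$) a Coxeter polynomial, any quiver — such as the triangular grid of Figure~\ref{fig:Extended8} — whose Alexander polynomial is not of this form cannot be mutation-acyclic.
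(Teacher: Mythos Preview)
Your high-level architecture matches the paper's: reduce to an acyclic initial quiver, propagate a congruence $U_t = G\,U_{t_0}\,G^{T}$ along mutations with $G$ built from $c$-vectors, and show at each stage that the resulting COQ is proper. In the paper this congruence is packaged as $U_t=(A_t-B_t)/2$ with $A_t=C_t^{T}A_{t_0}C_t$ Seven's admissible quasi-Cartan companion; since $B_t$ and $A_t$ transform by the \emph{same} matrix $M_{Q}(v_i,\varepsilon)$ (with $\varepsilon$ dictated by the red/green colour of $v_i$), one gets exactly $U_t=C_t^{T}U_{t_0}C_t$, i.e.\ your $G=C_t^{T}$.

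The genuine gap in your plan is the step you yourself flag: deducing properness from the normal form. You gesture at an ``Euler-form inequality on three consecutive $c$-vectors, using that they are $\pm$ real roots,'' but no such inequality is stated or proved, and it is not clear it would suffice. The paper's route here is quite different and more combinatorial. It reads off from Seven's Theorem~\ref{thm:seven A properties} that the sign pattern of $A_t$ records exactly which arrows go from a red vertex to a green one, and then proves a new structural result (Theorem~\ref{thm:no vortex}): in any mutation-acyclic quiver, the full subquiver on $\In(v)$ (resp.\ $\Out(v)$) is acyclic. This, applied to the bundled quiver $\boxQ_t$ with its single frozen vertex, forces the red subquiver and the green subquiver of $Q_t$ to be separately acyclic. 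With that in hand, the paper builds an auxiliary acyclic digraph $L_v(Q_t)$ for each vertex $v$ and takes $\sigma_v$ to be any linear extension; properness at $v$ and upper-triangularity of $U_t$ then fall out by a short case analysis on colours. None of this machinery --- Seven's sign theorem, the no-vortex theorem, or the $L_v$ construction --- appears in your sketch, and without something playing their role the induction does not close.

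Your alternative, arguing via real roots and the Euler form of the hereditary algebra, is plausible and indeed the paper notes that Hugh Thomas found an independent categorification proof along such lines; but as written your proposal is a plan rather than a proof, with the decisive lemma left unformulated.
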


\begin{remark}
\label{rem:the order}
It suffices to show that every acyclic quiver is totally proper.
Further\-more, as noted in {\cite[Observation~11.6]{COQ}}, any acyclic quiver has a proper cyclic ordering 
obtained by ``closing up'' any linear ordering compatible with the orientations of the quivers' arrows.
(All such cyclic orderings are related by wiggles.)
Our proof of Theorem~\ref{thm:acyclic totally proper} shows that these cyclic orderings are totally proper.
\end{remark}

In the course of proving Theorem~\ref{thm:acyclic totally proper},
we establish a few results that may be of independent interest.
In particular, Theorem~\ref{thm:no vortex} gives a simple combinatorial constraint on the orientations of arrows in mutation-acyclic quivers.
Both Theorem~\ref{thm:no vortex} and the ensuing Lemma~\ref{lem:Q tilde acyclic} assert that certain subquivers of a mutation-acyclic quiver must be acyclic. 
Should someone wish to compute with COQs and unipotent companions, Lemmas~\ref{lem:mutates like B v1gen} and \ref{lem:mutates like B v2gen} generalize and remove conditions from {\cite[Lemma 7.2 and Remark 7.3]{COQ}}.
This can speed up and simplify the computations. 

An alternative proof of Theorem~\ref{thm:acyclic totally proper} was independently discovered by Hugh Thomas, using categorification \cite{HughPC}.

One consequence of Theorem~\ref{thm:acyclic totally proper} is the existence of a \emph{Markov invariant} for all mutation acyclic quivers. 
We establish some basic inequalities for its coefficients and give a simple combinatorial formulation in the acyclic case (Proposition~\ref{prop:markov positive}), including that the Markov invariant is positive (respectively, at least $n-1$) for any (respectively, any connected) mutation-acyclic quiver (Corollary~\ref{cor:acyclic positive}). 
As in the $3$-vertex case, there are only finitely many acyclic quivers with a given Markov invariant. 
Thus there are only finitely many acyclic quivers with a given Alexander polynomial (Corollary~\ref{cor:markov finite}). 

Amanda Schwartz~\cite{SchwartzTree} studied the Alexander (and HOMFLY) polynomials of the link associated to certain plabic graphs, which in turn correspond to \emph{tree} quivers (those whose underlying undirected graph is a tree).
Our constructions agree on the Alexander polynomial.
She gave a combinatorial description of all of the coefficients of the Alexander polynomial in this case~\cite[Theorem 4.2]{SchwartzTree}.

A \emph{quasi-Cartan companion} is a symmetric matrix associated to a quiver, in analogy to the Cartan companion of a dynkin diagram.
The existence of a \emph{quasi-Cartan companion} with various properties has been used to characterize quivers of finite type \cite{BGZsymmetrizable, SevenSemiPositive}. 
Our proof of Theorem~\ref{thm:acyclic totally proper} uses the results of A.~Seven \cite{SevenAMatrices} to identify the correct cyclic ordering to associate with a given mutation-acyclic quiver, and to guarantee that certain subquivers of a mutation-acyclic quiver are acyclic (Theorem~\ref{thm:no vortex}).
Some of our technical lemmas also extend to quasi-Cartan companions, see Corollary~\ref{cor:A mutates like B}.

\subsection*{Overview of the paper}
Section~\ref{sec:quivs} establishes notation and reviews some material that does not involve COQs.
Much of this section is devoted to restating the results of A.~Seven \cite{SevenAMatrices}. 
Section~\ref{sec:coqs} is a condensed summary of the necessary results and notation from \cite{COQ}. 
It contains new examples, and Proposition~\ref{prop:markov positive} gives an alternative description of the Markov invariant for acyclic quivers.

In Section~\ref{sec:tech} we establish a handful of technical lemmas.
Lemmas~\ref{lem:mutates like B v1gen} and \ref{lem:mutates like B v2gen} make it easier to compute the unipotent companion of a COQ after a proper mutation.
Theorem~\ref{thm:no vortex} generalizes \cite[Corollary~13.3]{COQ} using the results of A.~Seven \cite{SevenAMatrices}.
Section~\ref{sec:thm} contains the proof of Theorem~\ref{thm:acyclic totally proper}.
We begin by defining $U=(A-B)/2$, where $A$ comes from Seven's construction and $B$ is the usual exchange matrix. 
The fact that the integral congruence class of $U$ is a mutation invariant follows quickly from Proposition~\ref{prop:A matrix mutation acyclic}.
The bulk of the effort is then spent showing that $U$ is indeed a unipotent companion. 

In Section~\ref{sec:cor}, we give some applications and corollaries of Theorem~\ref{thm:acyclic totally proper}. 
This includes a description of invariants for some particular families of quivers, inequalities satisfied by the coefficients of the Alexander polynomial for mutation-acyclic quivers (Corollaries~\ref{cor:acyclic positive},~\ref{cor:markov finite}, and~\ref{cor:acyclic det inequality}), and several examples of quivers that are shown to be not mutation-acyclic. 
This section also contains Examples~\ref{eg:not unimodal} and~\ref{eg:bipartite acyclic}, which respectively show that not all Alexander polynomials of acyclic quivers are unimodal, but that many are.

\subsection*{Acknowledgements}
I would like to thank my advisor, Sergey Fomin, for his guidance, comments, and suggestions; 
Grayson Moore, for reading and commenting on an early draft; 
and Danielle Ensign for her software assistance.
I am also grateful to Roger Casals, Tucker Ervin, Hugh Thomas, and Sara Billey for stimulating discussions.

I have used \textsc{Magma} and \texttt{Sage} for various computations.
This research was supported in part through computational resources and services provided by Advanced Research Computing at the University of Michigan, Ann Arbor. 

\newpage

\section{Quivers, mutation, and quasi-Cartan companions}
\label{sec:quivs}

We begin by reviewing notation, definitions and results from the literature which do not involve cyclically ordered quivers.
This includes a summary of much from \cite{SevenAMatrices}. 


\begin{definition}
A \emph{quiver} is a finite directed graph with parallel edges allowed, but no directed $1$ or $2$-cycles.
Directed edges in a quiver are called \emph{arrows}.
Each vertex is marked as \emph{mutable} or \emph{frozen}. 
Unless otherwise indicated, all vertices are mutable.

We use the notation $ u \rightarrow v$ to assert that there is at least one arrow from $u$ to~$v$.
We use the notation $u \stackrel{x}{\rightarrow} v$ to denote that there are $x$ arrows from vertex $u$ to vertex~$v$.
Let $\In(v) = \{u | u \rightarrow v\}$ and $\Out(v) = \{u | u \leftarrow v\}$ respectively denote the inset and outset of $v$ in a quiver.
\end{definition}

\begin{remark}
By default, our quivers have labeled vertices. 
Thus, we distinguish between isomorphic quivers on the same set of vertices that differ from each other by a permutation of this set. 
\end{remark}

\begin{definition}
To \emph{mutate} a quiver $Q$ at a mutable vertex $v$, do the following:
\begin{enumerate}
\item for each oriented path $u \rightarrow v \rightarrow w$, draw a new arrow $u \rightarrow w$ (thus, if there are $x$ arrows from $u \rightarrow v$ and $y$ arrows $v \rightarrow w$, we add $xy$ arrows $u \rightarrow w$);
\item reverse all arrows adjacent to $v$;
\item remove oriented $2$-cycles (one cycle at a time) until we again have a quiver. 
\end{enumerate}
We denote the resulting quiver by $\mu_v(Q)$.
Mutation is an involution, $\mu_v ( \mu_v(Q)) = Q$.
Mutation does not change which vertices are mutable or frozen.
\end{definition}

\begin{definition}
Two quivers are \emph{mutation equivalent} if they are related by a sequence of mutations at mutable vertices.
The \emph{mutation class} $[Q]$ is the set of quivers mutation equivalent to $Q$.
\end{definition}

\begin{definition}
A quiver is \emph{acyclic} if it does not contain any directed cycles.
A quiver is \emph{mutation-acyclic} if it is mutation equivalent to an acyclic quiver.
\end{definition}

\begin{example}
Only one of the two quivers in the Figure~\ref{fig:acyclic and not} is mutation-acyclic.
Can you guess which is which?  
\end{example}

\begin{figure}[ht]
{
\newcommand\cyclicWeights[5]{
	\filldraw[black] (#4,#5)++(150:0.8cm) circle (2pt) node[above left=-1pt] {$v_1$} coordinate (a);
	\filldraw[black] (#4,#5)++(30:0.8cm) circle (2pt) node[above right=-1pt] {$v_2$} coordinate (b);
	\filldraw[black] (#4,#5)++(-90:0.8cm) circle (2pt) node[below right=-1pt] {$v_3$} coordinate (c);
	\draw[black, -{stealth}, shorten >=3pt, shorten <= 3pt] (a) -- (b) node [midway, above] {#1};
	\draw[black, -{stealth}, shorten >=3pt, shorten <= 3pt] (b) -- (c) node [midway, right] {#2};
	\draw[black, -{stealth}, shorten >=3pt, shorten <= 3pt] (c) -- (a) node [midway, left] {#3};
}

\includegraphics[alt={Two cyclically oriented 3-vertex quivers, both with vertices labeled v1, v2 and v3. The cyclically oriented quiver on the left has multiplicities (weights) 5, 12, and 57 (between vertices 1 and 2, 2 and 3, and 3 and 1 respectively). The cyclically oriented quiver on the right has multiplicities 5, 12, and 58.}]{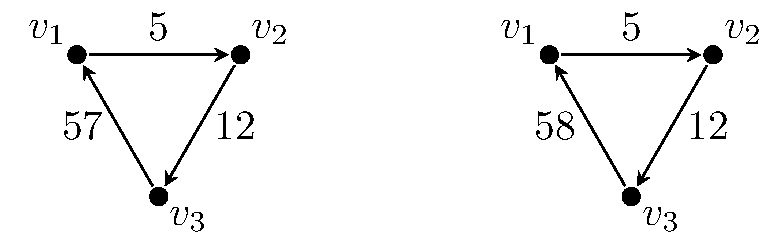}
} 

\caption{Two quivers on $3$-vertices. The quiver on the left \emph{is not} mutation-acyclic, while the quiver on the right \emph{is} mutation-acyclic (by mutating at $v_2$ and then~$v_1$).}
\label{fig:acyclic and not}
\end{figure}

\begin{definition}
\label{def:B matrix}
For a quiver $Q$, the \emph{$B$-matrix} $B_Q = (b_{uv})$ (also known as the \emph{exchange matrix}) 
is the skew-symmetric adjacency matrix of $Q$. 
By convention we have $b_{uv} > 0$ when there are $b_{uv}$ arrows $u \rightarrow v$, 
and $b_{uv} < 0$ when there are $-b_{uv}$  arrows $u \leftarrow v$.
\end{definition}

\begin{definition}
\label{def:subquiv}
A \emph{(full) subquiver} is an induced subgraph of a quiver.
We call the remaining vertices of a subquiver its \emph{support}.
The \emph{mutable part} 
of a quiver $Q$ is the subquiver supported by its mutable vertices.
\end{definition}

\begin{definition}
The \emph{principal extension} (also known as \emph{framing}) 
of a quiver $Q$ is a quiver $\widehat Q$ formed by adding a new frozen vertex $v'$ and a single arrow $v' \stackrel{}{\rightarrow} v$ for each vertex~$v$.
Thus $Q$ is the mutable part of $\widehat Q$ and $\widehat Q$ has twice as many vertices as~$Q$.
\end{definition}

\begin{figure}[ht]
{
\newcommand\extht{-1.5cm}
\newcommand\boxSize{2.5pt}
\includegraphics[alt={The principle framing of an acyclic four vertex quiver, with vertices v1, v2, v3, and v4. There is one arrow from v1 to v2, one arrow from v1 to v3, three arrows from v1 to v4, two arrows from v2 to v4, and one arrow from v3 to v4. The four frozen vertices are denoted v1', v2', v3', v4', and each come with a single arrow from v i' to v i.}]{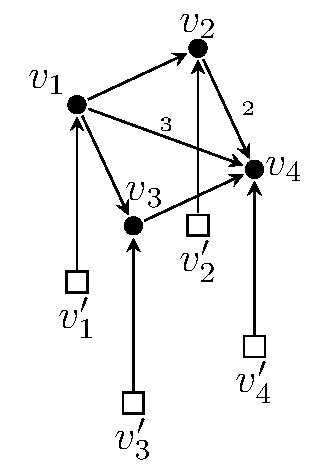}
} 

\caption{The black circle vertices $v_i$ support an acyclic quiver $Q$. 
With the additional square (frozen) vertices $v_i'$, we get the principal framing $\widehat Q$.}
\label{fig:principal frame}
\end{figure}

We will use the following notation for mutation classes and associated matrices and data. 
(Cf.\ also Definitions~\ref{def:A t} and~\ref{def:U t}.)

\begin{definition}
\label{def:indexing}
Let $\mathbb{T}_n$ be an $n$-regular tree whose edges are labeled by the integers $1, \ldots, n$, so that each edge label appears next to each vertex.
We write $t \stackrel{i}{\text{---}} t'$ to indicate that an edge labeled $i$ joins the vertices $t$ and $t'$ in $\mathbb{T}_n$.
The tree $\mathbb{T}_n$ always comes with a distinguished vertex~$t_0$. 

Fix a quiver $Q_0$ with $n$ mutable linearly ordered vertices $v_1 <  \cdots < v_n$ (and no frozen vertices).
We index the mutation class~$[\widehat Q_0]$ by simultaneously assigning a quiver $\tilde Q_t \in [\widehat Q_0]$ 
to each vertex $t$ in $\mathbb{T}_n$, so that $\tilde Q_{t_0}=\widehat Q_0$
and $\tilde Q_t = \mu_{v_i}(\tilde Q_{t'})$ whenever $t \stackrel{i}{\text{---}} t'$.
We call $Q_0$ the \emph{initial quiver}. 
For each vertex $t \in \mathbb T_n$, let~$Q_t$ be the mutable part of~$\tilde Q_t$.
Let $B_t = (b_{v_i v_j; t})$ be the $n \times n$ exchange matrix of~$Q_t$.
Similarly, let $C_t = (c_{v_i' v_j;t})$ be $n \times n$ bipartite adjacency matrix between frozen vertices and mutable vertices. 
Thus
$$b_{v_iv_j;t} = \#\{ \text{arrows } v_i \rightarrow v_j \text{ in } Q_t\} - \#\{ \text{arrows } v_i \leftarrow v_j  \text{ in } Q_t\},$$
$$c_{v_i'v_j;t} = \#\{ \text{arrows } v_i'\rightarrow v_j  \text{ in } \tilde Q_t\} - \#\{ \text{arrows } v_i' \leftarrow v_j  \text{ in } \tilde Q_t\}.$$
The matrix $C_t$ is called the \emph{$C$-matrix}, and its columns $\bold c_{v_i;t}$ are called \emph{$\bold c$-vectors}~\cite{Nakanishi-Zelevinsky}.
Note that $B_t$ is skew-symmetric, while $C_t$ need not have any symmetries.

Whenever we use a quiver or matrix indexed by $t$ we will specify an initial quiver, or relevant conditions on that choice.
We will often take an acyclic initial quiver $Q_0$. 
\end{definition}

\begin{example}
Let $Q_0$ be the acyclic quiver with vertices $v_1,v_2,v_3,v_4$, and arrows $v_1 \stackrel{}{\rightarrow} v_2 \stackrel{2}{\rightarrow} v_4$, $v_1\stackrel{}{\rightarrow} v_3 \stackrel{}{\rightarrow} v_4$, and $v_1 \stackrel{3}{\rightarrow} v_4$ (see Figure~\ref{fig:principal frame} for $\widehat Q_0$).
With initial quiver~$Q_0$ (and linear order $v_1 < v_2 < v_3 <v_4$), we have
$$ B_{t_0} = \begin{pmatrix} 0 & 1 & 1 & 3 \\ -1 & 0 & 0 & 2 \\ -1 & 0 & 0 & 1 \\ -3 & - 2 & - 1 & 0\end{pmatrix}, \quad \quad C_{t_0} = \begin{pmatrix} 1 & 0 & 0 & 0 \\ 0 & 1 & 0 & 0 \\ 0 & 0 & 1 & 0 \\ 0 & 0 & 0 & 1\end{pmatrix}.$$
Indeed, by construction $C_{t_0}$ is the identity matrix regardless of the initial quiver.

If $t \stackrel{2}{\text{---}} t_0$ then
$$ B_t = \begin{pmatrix} 0 & -1 & 1 & 5 \\ 1 & 0 & 0 & -2 \\ -1 & 0 & 0 & 1 \\ -5 & 2 & - 1 & 0\end{pmatrix}, \quad \quad C_t = \begin{pmatrix} 1 & 0 & 0 & 0 \\ 0 & -1 & 0 & 2 \\ 0 & 0 & 1 & 0 \\ 0 & 0 & 0 & 1\end{pmatrix}.$$
\end{example}

\begin{remark}
The map $t \mapsto (C_t, B_t)$ is called a (principal, tropical) \emph{$Y$-seed pattern} in \cite{SevenAMatrices}, 
as it is essentially equivalent to an assignment of tropical coefficient variables (usually denoted~$\bold y$) 
and $B$-matrix to each vertex $t\in \mathbb T_n$, cf.\ \cite[Proposition~3.6.5]{CAtextbook1-3}.  
\end{remark}

\begin{definition}
\label{def:redgreen}
A (mutable) vertex $v$ in a quiver $Q_t$ (or $\tilde Q_t$) is green (resp., red) if every entry of $\bold c_{v;t}$ is nonnegative (resp., nonpositive).
\end{definition}

\begin{example}
Every vertex in $Q_{t_0}$ is green.
Mutation at vertex $v$ toggles the color of $v$. 
(Other vertices may change color as well!)
\end{example}

\begin{theorem}[{\cite{QPs2signcoherence}}]
\label{thm:sign coherence}
Every vertex in $Q_t$ is either green or red (never both), regardless of the choice of initial quiver.
\end{theorem}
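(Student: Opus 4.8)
The assertion is precisely the \emph{sign-coherence} of $\bold c$-vectors: for every $t\in\mathbb T_n$ and every $i$, either all entries of $\bold c_{v_i;t}\in\ZZ^n$ are $\ge 0$ (so $v_i$ is green) or all are $\le 0$ (so $v_i$ is red), and $\bold c_{v_i;t}\neq 0$ (which is what ``never both'' forbids). Since $\bold c_{v_i;t_0}$ is a standard basis vector and $C_t$ evolves by the tropical $Y$-seed recursion, this is equivalent, via the Nakanishi--Zelevinsky duality between $C$- and $G$-matrices, to sign-coherence of $\bold g$-vectors; I would prove it in the $\bold c$-vector form, using representations of quivers with potentials in the style of \cite{QPs2signcoherence}.

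First, by the non-degenerate potential existence theorem of Derksen--Weyman--Zelevinsky, fix a \emph{non-degenerate} potential $W$ on $Q_0$, so that every quiver-with-potential in the mutation class is well defined (no $2$-cycles appear) and QP-mutation $\mu_v$ is an involution up to right-equivalence. One then introduces \emph{decorated representations} $\mathcal M=(M,V)$ of the Jacobian algebra ($M$ a finite-dimensional module, $V$ a decoration space) together with a mutation operation on them lifting $\mu_v$ and squaring to the identity. For $t\in\mathbb T_n$, express a path $t_0\rightsquigarrow t$ as a word in the $\mu_{v_j}$ and apply the corresponding decorated-representation mutations to the negative simple $S_i^-=(0,S_i)$, obtaining a decorated representation $\mathcal M_{i;t}$. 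Comparing the tropical $Y$-seed recursion for $C_t$ with the DWZ mutation rule shows that $\bold c_{v_i;t}$ equals, up to an overall sign recorded by the color of $v_i$, the weight vector $\underline{\dim}\, M-\underline{\dim}\, V$ of $\mathcal M_{i;t}=(M,V)$.

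It then remains to establish the dichotomy: each $\mathcal M_{i;t}$ is either \emph{positive}, meaning $V=0$ (and then its weight vector is $\underline{\dim}\, M\ge 0$), or \emph{negative}, meaning $M=0$ (and then its weight vector is $-\underline{\dim}\, V\le 0$). This is the crux. The $E$-invariant of a decorated representation is preserved by mutation, and the negative simple $S_i^-$ is rigid (has $E$-invariant $0$), hence so is $\mathcal M_{i;t}$. Since the $E$-invariant is bounded below by the dimension of the ``overlap'' between the module part and the decoration, rigidity forces that overlap to vanish; tracking indecomposability along the mutations (a reachable decorated representation is indecomposable, or a negative simple) then yields the dichotomy, and with it sign-coherence. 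The closing phrase ``regardless of the choice of initial quiver'' requires nothing extra: the statement is about a fixed quiver $Q_t$, and one may simply rerun the argument with $Q_t$ itself as initial quiver.

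The main obstacle is the entire middle layer --- building decorated representations, proving that QP-mutation lifts to a well-defined involution on them, and developing the $E$-invariant (its mutation-invariance and the lower bound used above); this is the substance of \cite{QPs2signcoherence} and is genuinely technical. Two routes avoid quivers with potentials altogether: the scattering-diagram method of Gross--Hacking--Keel--Kontsevich, in which $\bold c$-vectors appear as the primitive wall normals bounding the cluster chambers and sign-coherence follows because every wall produced by the Kontsevich--Soibelman algorithm is supported on $m^{\perp}$ with $m$ a nonnegative integer combination of the $e_i$; and Nagao's Donaldson--Thomas argument (or Plamondon's cluster-category argument in the Jacobi-finite case, extended in general by an approximation argument). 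All three carry a comparably deep technical core.
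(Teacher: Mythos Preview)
The paper does not supply its own proof of this theorem: it is stated with attribution to \cite{QPs2signcoherence} and then used as a black box throughout. Your outline is a faithful high-level summary of the Derksen--Weyman--Zelevinsky argument in that reference (non-degenerate potentials, decorated representations and their mutations, mutation-invariance of the $E$-invariant forcing the positive/negative dichotomy), and you correctly flag the alternative routes via scattering diagrams and Donaldson--Thomas/cluster-category methods. There is therefore nothing in the paper to compare against beyond the citation itself.
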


We next re-interpret Theorem~\ref{thm:sign coherence} using the following definition. 

\begin{definition}
\label{def:boxQ}
For  $t$ in $\mathbb T_n$, let $\boxQ_t$ denote the quiver obtained from $\tilde Q_t$ as follows: 
\begin{itemize}[leftmargin=.2in]
\item 
remove all frozen vertices and the arrows incident to them; 
\item
add a single new frozen vertex $\mathring v$; 
\item 
for each green mutable vertex~$v_j$, 
add $\sum_{i} c_{v_i' v_j; t}$ arrows  $\mathring{v} \rightarrow v_j$; 
\item
for each red mutable vertex~$v_j$, add $-\sum_{i} c_{v_i' v_j; t}$ arrows  $\mathring{v} \leftarrow v_j$.
\end{itemize}
\end{definition}

Note that, by Theorem~\ref{thm:sign coherence}, all arrows connecting frozen vertices to a given mutable vertex 
are oriented in the same direction. 
It also follows from Theorem~\ref{thm:sign coherence} that the ``bundling'' operation $\tilde Q_t \mapsto \boxQ_t$   
commutes with mutation: 

\begin{corollary}
\label{cor:Cmats and gluing}
If $t \stackrel{i}{\text{---}} t'$, or equivalently $\mu_{v_i}(\tilde Q_t)=\tilde Q_{t'}$, then 
 $\mu_{v_i}(\boxQ_t) = \boxQ_{t'}$.
\end{corollary}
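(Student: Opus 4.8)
The plan is to reduce the statement to two things we already have in hand: Theorem~\ref{thm:sign coherence} (sign-coherence of $\mathbf c$-vectors, so that every mutable vertex is unambiguously green or red in each $Q_t$), and the standard matrix mutation rule for the pair $(C_t, B_t)$. Since $\tilde Q_t$ and $\tilde Q_{t'}$ already satisfy $\mu_{v_i}(\tilde Q_t) = \tilde Q_{t'}$ by the indexing convention of Definition~\ref{def:indexing}, the only content is to check that the ``bundling'' prescription of Definition~\ref{def:boxQ}, when applied before and after mutating at $v_i$, yields quivers related by $\mu_{v_i}$. In other words, writing $\boxQ_t$ as: the mutable part $Q_t$, together with a single frozen vertex $\mathring v$ joined to each mutable $v_j$ by $\varepsilon_j := \sum_i c_{v_i' v_j; t}$ arrows (oriented $\mathring v \to v_j$ if $v_j$ is green, i.e.\ $\varepsilon_j \ge 0$, and $\mathring v \leftarrow v_j$ if red, i.e.\ $\varepsilon_j \le 0$), I must show $\mu_{v_i}(\boxQ_t)$ has the analogous description built from the column sums of $C_{t'}$.

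First I would record that in $\boxQ_t$ the signed number of arrows from $\mathring v$ to $v_j$ is exactly $\varepsilon_j = \sum_i c_{v_i' v_j; t}$; this is precisely where Theorem~\ref{thm:sign coherence} is used, since it guarantees all of $c_{v_1' v_j;t}, \dots, c_{v_n' v_j;t}$ have the same sign, so the sum $\sum_i c_{v_i' v_j;t}$ has absolute value equal to the total number of frozen-to-$v_j$ arrows and sign equal to their common orientation — i.e.\ bundling all $n$ frozen vertices into one and adding up multiplicities is consistent. So $\boxQ_t$ is literally the quiver on vertices $\{v_1,\dots,v_n,\mathring v\}$ whose exchange-type matrix has $B_t$ in the mutable block and the row vector $(\varepsilon_1, \dots, \varepsilon_n)$ recording $\mathring v$'s connections. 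Then I would apply the quiver mutation rule at $v_i$ to this quiver directly. Mutation at $v_i$ affects: (a) the mutable block, turning $B_t$ into $B_{t'}$ — this is just that $Q_{t'} = \mu_{v_i}(Q_t)$; and (b) the $\mathring v$-row, via the usual formula: the entry for $v_i$ negates (from $\varepsilon_i$ to $-\varepsilon_i$), and for $j \ne i$ the new entry is $\varepsilon_j + \tfrac12(|\varepsilon_i| b_{v_i v_j;t} + \varepsilon_i |b_{v_i v_j;t}|)$, the contribution of length-2 paths through $v_i$ together with sign bookkeeping / 2-cycle cancellation. The task is to match this with $\varepsilon_j' := \sum_i c_{v_i' v_j; t'}$.

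The comparison in step (b) is exactly the content of the $C$-matrix mutation rule. Mutation of the $Y$-seed pattern $t \mapsto (C_t, B_t)$ at $v_i$ transforms each entry $c_{v_k' v_j; t}$ by the same formula $c_{v_k' v_j;t'} = -c_{v_k' v_j;t}$ when $j = i$, and $c_{v_k' v_j;t'} = c_{v_k' v_j;t} + \tfrac12(|c_{v_k' v_i;t}| b_{v_i v_j;t} + c_{v_k' v_i;t} |b_{v_i v_j;t}|)$ when $j \ne i$ (this is the standard rule, cf.\ the $Y$-seed formulation in \cite{SevenAMatrices} and \cite[Prop.~3.6.5]{CAtextbook1-3}). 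Summing over $k$, using sign-coherence to pull $\sum_k |c_{v_k' v_i;t}| = |\sum_k c_{v_k' v_i;t}| = |\varepsilon_i|$ and $\sum_k c_{v_k' v_i;t} = \varepsilon_i$ out of the formula, gives exactly $\varepsilon_j' = \varepsilon_j + \tfrac12(|\varepsilon_i| b_{v_i v_j;t} + \varepsilon_i |b_{v_i v_j;t}|)$, which is the $\mathring v$-row entry produced by mutating $\boxQ_t$ at $v_i$. Hence $\mu_{v_i}(\boxQ_t)$ and $\boxQ_{t'}$ have the same exchange-type matrix and therefore coincide. The main (and essentially only) obstacle is the interchange of the sum over frozen vertices with the absolute values in the mutation rule; this is precisely what sign-coherence is for, so there is no real difficulty — but it is the one place the argument genuinely uses Theorem~\ref{thm:sign coherence} rather than formal manipulation. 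One should also note there is no orientation ambiguity in $\mu_{v_i}(\boxQ_t)$: $\mathring v$ is frozen and $Q_{t'}$ contains no 2-cycles, so the quiver mutation is well-defined and the result is genuinely a quiver.
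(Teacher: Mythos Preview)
Your proof is correct and follows exactly the approach the paper indicates: the paper simply asserts that the corollary follows from Theorem~\ref{thm:sign coherence} without spelling out the computation, and you have correctly filled in the details by summing the $C$-matrix mutation rule over the frozen index and invoking sign-coherence to interchange the sum with the absolute value. This is precisely the intended argument, with no gaps.
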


\begin{remark} 
More generally, $C$-matrices can be used to compute the number of arrows between frozen and mutable vertices 
for  ``triangular extensions'' other than the principal framing,  
such as those that involve adding a frozen vertex $\mathring{v}$ and 
any number of arrows $\mathring{v} \rightarrow v$, cf.\ \cite[Theorem~3.2]{CaoLiCmats}.
\end{remark}


\begin{definition}
\label{def:underlying unoriented graph}
For a quiver $Q$ we define the underlying unoriented \emph{simple} graph~$K_Q$.
The graph $K_Q$ has the same vertex set as $Q$, and an edge $u - v$ whenever there are any arrows $u \rightarrow v$ or $v \rightarrow u$ in $Q$.
There are no parallel edges.

By a cycle in~$K_Q$, we mean a sequence of vertices in $K_Q$ where each consecutive pair is joined by an edge $\mathcal O = (w_0 - w_1 - \cdots - w_\ell=w_0)$, considered up to cyclic shifts.
In particular, cycles in~$K_Q$ have a direction of traversal. 
We will distinguish between two cycles with the same vertices traversed in the opposite order.
Cycles thus correspond to an element of the first homology group~$H_1(K_Q, \ZZ)$.
\end{definition}
\pagebreak[2]

\begin{definition}[{cf.\ \cite[Definition 2.10]{COQ}}]
\label{def:undirectedCycle}
A cycle $\mathcal O = (w_0 - w_1 - \cdots - w_\ell=w_0)$ in $K_Q$ is \emph{chordless} if there are no edges between $w_i$ and $w_{j}$ for $i \neq j \pm 1$.
If the arrows between $w_i$ and $w_{i+1}$ (in $Q$) are directed with the indexing of the cycle, $w_i \rightarrow w_{i+1}$, we say the cycle is \emph{forward-oriented}.
If instead the arrows are directed against the indexing, $w_i \leftarrow w_{i+1}$, then we say the cycle is \emph{backward-oriented}.
A cycle is \emph{oriented} if it is either forward-oriented or backward-oriented.

\hide{
While the cycle $\mathcal O$ lives in the unoriented simple graph $K_Q$, each edge in the cycle corresponds to one or more arrows in $Q$.
When the orientation of some (or all) of the arrows is known, we may replace the dashes ``$-$'' with ``$\rightarrow$'' or ``$\leftarrow$'' accordingly.
Thus if $\mathcal O$ is forward-oriented, then it could be denoted $(w_0 \rightarrow w_1 \rightarrow \cdots \rightarrow w_\ell = w_0)$.
}
\end{definition}

\begin{remark}
\label{rem:cycle terminology}
What we call a ``chordless cycle in $K_Q$'' is called a ``cycle'' in \cite{SevenAMatrices}. 
Thus, if a chordless cycle in $K_Q$ is oriented, then it is called an ``oriented cycle.'' 
Similarly, cycles which are not oriented are called "nonoriented." 

In \cite{SevenAMatrices}, cycles do not have an order of traversal, and so the forward/backward distinction does not arise.
The order of traversal of cycles plays an important role in~\cite{COQ} (cf.\ Definition~\ref{def:winding formula}).
\end{remark}

\begin{figure}[ht]
{
\newcommand\extht{1.5cm}
\newcommand\boxSize{2.5pt}
\includegraphics[alt={Two quivers on the vertices v1, v2, ..., v6. The quiver on the left is called Q, while the quiver on the right is called Q'. The quiver Q is an oriented cycles, with 3 arrows from v1 to v2 and one arrow from v2 to v3, v3 to v4, v4 to v5, v5 to v6, and from v6 to v1. There are no other arrows in Q. The quiver Q' instead has 3 arrows from v1 to v2, one arrow from v3 to v2, v3 to v6, v3 to v4, v4 to v5, v5 to v6, and v6 to v1. Thus Q' has an undirected cycle v1, v2, v3, v4, v5, v6, v1, but it has a chord from v3 to v6.}]{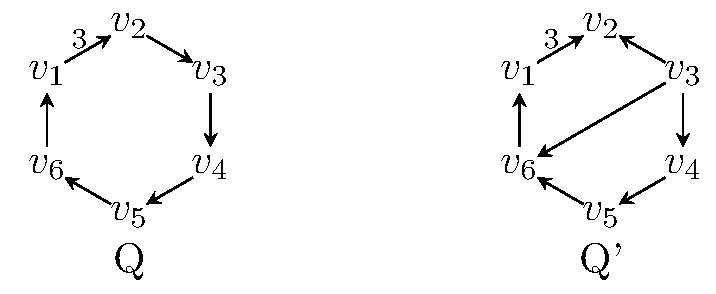}
} 

\caption{Two quivers $Q$ and $Q'$. The cycle $(v_1 - v_2 - v_3 - v_4 - v_5 - v_6 - v_1)$ in $K_Q$ is chordless and forward-oriented, while in $K_{Q'}$ it is not chordless.
The chordless cycles $(v_1 - v_2 - v_3 - v_6 - v_1)$ and $(v_3 - v_4 - v_5 - v_6 - v_3)$ in $K_{Q'}$ are not oriented.}
\label{fig:chordless cycles}
\end{figure}

\begin{definition}[{\cite{BGZsymmetrizable}}]
\label{def:quasi cartan}
Fix a quiver $Q$ with $B$-matrix $B_Q = (b_{vw})$. 
A \emph{quasi-Cartan companion} of~$Q$ is a symmetric matrix $A = (a_{vw})$ such that $a_{vv}=2$ and $a_{vw} = \pm b_{vw}$ otherwise. 
\end{definition}

A quasi-Cartan companion of a given matrix~$B_Q$ is determined by a choice of $\binom n 2$ signs.
The way these signs are chosen along chordless cycles will play a special role: 

\begin{definition}[{\cite{SevenAMatrices}}]
\label{def:admissible}
A quasi-Cartan companion $A=(a_{vw})$ of a quiver $Q$ is \emph{admissible} if for every chordless cycle $\mathcal O = (w_0 - \cdots - w_\ell=w_0)$ in~$K_Q$ the number $\# \{ i | a_{w_i w_{i+1}} > 0 \text{ and } i < \ell \}$ is odd when $\mathcal O$ is oriented, and is even otherwise. 
\end{definition}


\begin{definition}[{Cf.\ Definition~\ref{def:indexing}}]
\label{def:A t}
Fix an acyclic initial quiver $Q_0$, with exchange matrix $B_{Q_0} = (b_{vw})$.
We define the \emph{initial quasi-Cartan companion} $A_{t_0} = (a_{vw})$ by $a_{vv}=2$ and $a_{vw} = - |b_{vw}|$ for $v\neq w$.
For each $t \in \mathbb T_n$ we define the matrix $A_t = C_t^T A_{t_0} C_t$.
\end{definition}

The following theorems of A.~Seven will play an important role in our proof of Theorem~\ref{thm:acyclic totally proper}.

\begin{theorem}[{\cite[Theorem 1.3]{SevenAMatrices}}]
\label{thm:SevenAdmissibleCompanions}
Suppose the initial quiver $Q_0$ is acyclic. 
Then for every $t \in \mathbb T_n$, the matrix $A_t = C_t^T A_{t_0} C_t$ (see Definition~\ref{def:A t}) is an admissible quasi-Cartan companion of $Q_t$. 
\end{theorem}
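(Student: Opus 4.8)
The plan is to induct on the distance from $t_0$ to $t$ in $\mathbb T_n$, following the philosophy of mutating quasi-Cartan companions from \cite{BGZsymmetrizable}. The engine is the mutation rule for $C$-matrices. For a skew-symmetric integer matrix $B=(b_{v_iv_j})$, a vertex $k$, and a sign $\varepsilon\in\{+1,-1\}$, let $E_k(B,\varepsilon)$ be the integer matrix agreeing with the identity outside its $k$-th row, whose $k$-th row has entry $-1$ in position $k$ and entry $\max(\varepsilon\,b_{v_kv_j},0)$ in position $j\neq k$. Then, whenever $t\stackrel{k}{\text{---}}t'$ and $\varepsilon$ is the common sign of the entries of $\mathbf c_{v_k;t}$ (well defined by Theorem~\ref{thm:sign coherence}), one has the identities $C_{t'}=C_tE$ and $B_{t'}=E^TB_tE$, where $E=E_k(B_t,\varepsilon)$ (the second identity in fact holds for either sign). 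Hence $A_{t'}=C_{t'}^TA_{t_0}C_{t'}=E^TA_tE$, which reduces the theorem to an edgewise statement. The base case $t=t_0$ is immediate: $A_{t_0}$ is symmetric with diagonal entries $2$ and off-diagonal entries $-|b_{vw}|$, so it is a quasi-Cartan companion of $Q_0$; and because $Q_0$ is acyclic, $K_{Q_0}$ has no oriented chordless cycle, so Definition~\ref{def:admissible} is satisfied with the relevant count equal to $0$, which is even.

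For the inductive step I would isolate the local statement: if $A=(a_{ij})$ is an admissible quasi-Cartan companion of a quiver $Q$ with exchange matrix $B_Q=(b_{ij})$, and $E=E_k(B_Q,\varepsilon)$ for the sign $\varepsilon$ prescribed by the $\mathbf c$-vectors, then $E^TAE$ is an admissible quasi-Cartan companion of $\mu_k(Q)$, with exchange matrix $B'=E^TB_QE=(b'_{ij})$. The columns of $E$ are $-\mathbf e_k$ in position $k$ and $\mathbf e_i+\max(\varepsilon\,b_{ki},0)\,\mathbf e_k$ otherwise, so expanding $(E^TAE)_{ij}$ is elementary; the content is matching the result to $\pm b'_{ij}$. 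I would run a case analysis on whether $i$ and $j$ are adjacent to $k$ in $Q$, and adjacent to each other: when $\{i,j,k\}$ spans a triangle of $K_Q$, the sign of the relevant entry of $A$ is pinned down by admissibility of $A$ on that chordless $3$-cycle (together with the known directions of the three arrows), and the remaining cases are shorter. Preservation of admissibility is then obtained by comparing chordless cycles of $K_{\mu_k(Q)}$ with those of $K_Q$ and tracking, through the formula $E^TAE$, how both the oriented/nonoriented status of a cycle and the parity of its positive-entry count change after mutation at $k$.

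The hard part will be that this local statement is \emph{false} for an arbitrary sign $\varepsilon$. Already the diagonal forces a constraint: for $i\neq k$ one computes $(E^TAE)_{ii}=2+2c_ia_{ik}+2c_i^2$ with $c_i=\max(\varepsilon\,b_{ki},0)\ge 0$, which equals $2$ only when $c_i=0$ or $a_{ik}=-c_i$; the same requirement resurfaces when matching the entries $(E^TAE)_{kj}$ and $(E^TAE)_{ij}$ for two non-adjacent neighbours $i,j$ of $k$. In every case the requirement amounts to a \emph{compatibility} between the signs of $A$ near $v_k$ and the colour of $v_k$ (Definition~\ref{def:redgreen}): when $v_k$ is green, $a_{kj}<0$ for every $j$ with $v_k\to v_j$, and dually when $v_k$ is red. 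This compatibility is not implied by admissibility, so it must be folded into the induction --- one proves simultaneously that every $A_t$ is an admissible quasi-Cartan companion of $Q_t$ \emph{and} is compatible in this sense with the green/red colouring of $Q_t$ --- and one must show it propagates under mutation, using Theorem~\ref{thm:sign coherence} and the rule governing how colours change. Establishing that both properties survive every mutation, in particular for chordless cycles of $Q_{t'}$ that avoid $v_k$ but use an edge created by the mutation (so that relating them back to $Q_t$ may introduce a chord at $v_k$ and require passing to a shorter cycle), is where I expect essentially all of the effort to go.

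An alternative I would keep in reserve bypasses most of this bookkeeping: since $Q_0$ is acyclic, the $\mathbf c$-vectors $\mathbf c_{v_i;t}$ are real roots of the Kac--Moody root system with symmetric Cartan matrix $A_{t_0}$, so $a_{v_iv_i;t}=(\mathbf c_{v_i;t},\mathbf c_{v_i;t})_{A_{t_0}}=2$ automatically; then $a_{v_iv_j;t}=\pm b_{v_iv_j;t}$ and the admissibility parity condition follow from the classification of the rank-at-most-$2$ sub-root-systems spanned by pairs of $\mathbf c$-vectors, together with a monodromy argument around each chordless cycle. This trades the combinatorial propagation of compatibility for root-theoretic input about $\mathbf c$-vectors; I would attempt the first route first, as it stays entirely within the apparatus of Definitions~\ref{def:indexing} and~\ref{def:A t}.
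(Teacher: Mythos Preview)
The paper does not prove this theorem; it is quoted from Seven~\cite{SevenAMatrices} without argument. Your sketch is, in outline, a reconstruction of Seven's own proof: he too inducts on the distance from $t_0$, uses the identity $A_{t'}=E^TA_tE$ coming from $C_{t'}=C_tE$, and discovers that admissibility alone does not propagate. The strengthened hypothesis you arrive at---that the sign of $a_{kj;t}$ is tied to the colour of $v_k$---is precisely the first bullet of Theorem~\ref{thm:seven A properties}, and Seven proves Theorems~\ref{thm:SevenAdmissibleCompanions} and~\ref{thm:seven A properties} simultaneously for exactly the reason you found in the diagonal computation $(E^TAE)_{ii}=2+2c_ia_{ik}+2c_i^2$.

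What you flag as ``where essentially all of the effort goes'' is accurate and is where Seven spends most of his paper: the case analysis on chordless cycles of $Q_{t'}$ (through $v_k$; avoiding $v_k$ but using a newly created edge; untouched), together with the check that colour compatibility itself survives the mutation. Your warning that a chordless cycle of $Q_{t'}$ may pull back to a cycle with a chord at $v_k$ in $Q_t$, forcing a reduction to shorter cycles, is exactly the subtlety that has to be handled. So the plan is sound, but at the level of detail you have written it remains a plan rather than a proof; the case analysis you defer is genuinely the substance of the theorem. Your alternative root-theoretic route (realising $\mathbf c$-vectors as real roots for the Cartan matrix $A_{t_0}$) is also legitimate and gives a clean reason for the diagonal entries of $A_t$ to equal~$2$, but the parity condition around chordless cycles still requires an argument of comparable length, so it does not obviously shorten the proof.
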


We will also need the following result. 
\pagebreak[3]

\begin{theorem}[{\cite[Theorems 1.3, 1.4]{SevenAMatrices}}]
\label{thm:seven A properties}
Let $Q_0$ be acyclic, and fix a vertex $t \in \mathbb T_n$.
Let $A_t= (a_{uv;t})$. 
\begin{itemize}
\item For an arrow $u \rightarrow v$ in $Q_t$, we have $a_{uv;t} > 0$ if and only if $u$ is red and $v$ is green in $Q_t$.
\item Every oriented path of mutable vertices $w_1 \rightarrow \cdots \rightarrow w_m$ in $Q_t$ has at most one positive entry in $A_t$ (thus $a_{w_i w_{i+1};t} > 0$ for at most one $i < m$).
\end{itemize}
\end{theorem}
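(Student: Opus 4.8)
The plan is to establish both statements, following A.~Seven \cite{SevenAMatrices}, by induction on the distance from $t_0$ in $\mathbb T_n$, using the definition $A_t=C_t^TA_{t_0}C_t$ together with sign-coherence of $\mathbf c$-vectors (Theorem~\ref{thm:sign coherence}) and admissibility of $A_t$ (Theorem~\ref{thm:SevenAdmissibleCompanions}). I would write $\mathbf c_{v;t}$ for the column of $C_t$ indexed by a mutable vertex $v$, so that $a_{uv;t}=\mathbf c_{u;t}^{T} A_{t_0}\,\mathbf c_{v;t}$, and recall that $v$ is green (resp.\ red) in $Q_t$ exactly when $\mathbf c_{v;t}$ is entrywise $\ge 0$ (resp.\ $\le 0$). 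At $t=t_0$ every vertex is green, every off-diagonal entry of $A_{t_0}$ equals $-|b_{uv}|\le 0$, and for every oriented path $w_1\to\cdots\to w_m$ one has $a_{w_iw_{i+1};t_0}<0$ for all $i$; this settles both base cases (for the first, the right-hand side is vacuously false, there being no red vertices).

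For the inductive step I would fix $t\stackrel{k}{\text{---}}t'$, set $\epsilon=+1$ if $v_k$ is green in $Q_t$ and $\epsilon=-1$ otherwise, and write $[x]_+=\max(x,0)$. Sign-coherence turns the standard mutation rule for $C$-matrices into $\mathbf c_{v_k;t'}=-\mathbf c_{v_k;t}$ and $\mathbf c_{v;t'}=\mathbf c_{v;t}+[\epsilon b_{kv;t}]_+\,\mathbf c_{v_k;t}$ for $v\ne v_k$; feeding this into $A_t=C_t^TA_{t_0}C_t$ and using $a_{kk;t}=2$ (Theorem~\ref{thm:SevenAdmissibleCompanions}) gives
\[
a_{uv;t'}=a_{uv;t}+[\epsilon b_{kv;t}]_+\,a_{uk;t}+[\epsilon b_{ku;t}]_+\,a_{kv;t}+2\,[\epsilon b_{ku;t}]_+[\epsilon b_{kv;t}]_+\qquad(u,v\ne v_k),
\]
together with $a_{uk;t'}=-a_{uk;t}-2[\epsilon b_{ku;t}]_+$. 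I would then verify the claimed equivalence for each arrow $u\to v$ of $Q_{t'}$ by cases. If $v_k$ is an endpoint, the arrow is the reversal of an arrow of $Q_t$, and either the new color of $v_k$ already makes the right-hand side false while $a$ is forced negative by a magnitude computation, or that color ``agrees'' with $\epsilon$, in which case the other endpoint's $\mathbf c$-vector is unchanged and the claim follows from the inductive hypothesis and sign-coherence. If $v_k$ is not an endpoint, $u\to v$ either survives from $Q_t$ or is created from a $2$-path $u\to v_k\to v$; here one splits on the signs of $b_{uk;t}$ and $b_{kv;t}$ and on $\epsilon$, computes $b_{uv;t'}$ from the exchange-matrix mutation (to know whether an arrow is actually present), computes the colors of $u,v$ in $Q_{t'}$ from the $\mathbf c$-recurrence, and uses admissibility of $A_t$ along the chordless $3$-cycle on $\{u,v_k,v\}$, when it exists, to pin the sign of $a_{uv;t}$ from those of $a_{uk;t}$ and $a_{kv;t}$.

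For the second statement, the first one already forbids two \emph{consecutive} positive entries on an oriented path: if $a_{w_iw_{i+1};t}>0$ then $w_{i+1}$ is green, so $a_{w_{i+1}w_{i+2};t}>0$ would force $w_{i+1}$ red. Positive entries separated by a gap I would exclude by carrying the second statement through the same induction: since every arrow of $Q_{t'}$ that is not already in $Q_t$ factors through the single vertex $v_k$, a mutation can neither create a second positive entry on an oriented path that had one, nor splice two oriented paths each carrying a positive entry into a single oriented path of $Q_{t'}$. The main obstacle is the ``arrow created from a $2$-path through $v_k$'' case of the first-statement induction: there one must run the exchange-matrix, $\mathbf c$-vector and $A$-matrix recurrences in tandem and invoke admissibility, and several sign sub-cases each require separate verification. (An alternative that avoids this bookkeeping is provided by H.~Thomas's categorical methods \cite{HughPC}, under which $A_{t_0}$ is the symmetrized Euler form of $kQ_0$, the $\mathbf c$-vectors are $\pm$ the dimension vectors of exceptional $kQ_0$-modules, and both statements become assertions about $\operatorname{Hom}$ and $\operatorname{Ext}^1$ of such modules.)
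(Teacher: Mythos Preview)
The paper does not give its own proof of this theorem: it is quoted from Seven \cite{SevenAMatrices}, and the only argument supplied is the one-line Remark after the statement, explaining how to read off the first bullet from \cite[Theorem~1.3]{SevenAMatrices} by checking the four sign combinations of $\mathbf c_i$ and $\mathbf c_j$. Your proposal therefore attempts considerably more than the paper does, namely to reprove Seven's results from scratch. For the first bullet your plan is sound---the recurrences you write for $a_{uv;t'}$ and $a_{uk;t'}$ are correct, and the case split you describe (endpoint equal to $v_k$ or not; surviving arrow versus arrow created from a $2$-path; admissibility on the triangle $\{u,v_k,v\}$) is the right one---though, as you acknowledge, the verifications are not carried out.

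For the second bullet there is a genuine gap. You correctly note that \emph{consecutive} positive entries are excluded by the first bullet, but your treatment of non-consecutive ones is not an argument: the assertion that ``a mutation can neither create a second positive entry on an oriented path that had one, nor splice two oriented paths each carrying a positive entry into a single oriented path of $Q_{t'}$'' is exactly what must be proved, and it does not follow from the bare fact that new arrows of $Q_{t'}$ arise from $2$-paths through $v_k$. An oriented path in $Q_{t'}$ may mix unchanged arrows, reversed arrows incident to $v_k$, and arrows created or flipped by the $2$-path rule, while the signs of the relevant $a$-entries simultaneously change according to your own recurrence. In particular, nothing you have written rules out a color pattern $\text{red}\to\text{green}\to\cdots\to\text{red}\to\text{green}$ along an oriented path of $Q_{t'}$, which by the first bullet would yield two positive entries. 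Closing this requires either reproducing the analysis behind \cite[Theorem~1.4]{SevenAMatrices} or supplying an independent argument; the sentence you have is a restatement of the goal, not a proof.
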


An illustration of Theorem~\ref{thm:seven A properties} can be found in Example~\ref{eg:Check signs of A t}. 

\begin{remark}
To see how \cite[Theorem 1.3]{SevenAMatrices} implies the first 
statement of Theorem~\ref{thm:seven A properties}, assume that $b_{ji} < 0$ 
and check the four combinations of $\sgn(\bold c_i)$ and~$\sgn(\bold c_j)$. 
\end{remark}

\begin{remark}
For quivers mutation equivalent to acyclic quivers with sufficiently many arrows, 
a number of properties of the matrices $B_t$ and $C_t$ matrices  
have been recently established by T.~Ervin, see \cite[Proposition~5.2, Lemma~5.4]{ErvinRedSize}. 
These results may potentially be used to obtain enhancements of Theorem~\ref{thm:seven A properties}.
\end{remark}

\begin{definition}[{\cite[Theorem 2.8.3]{CAtextbook1-3}}]
\label{def:mutation mat}
We define a pair of square matrices $M_Q(v, \pm 1)$, associated to mutating a quiver $Q$ at a given vertex $v$.
Let ${B_Q=(b_{pq})}$ be the exchange matrix of $Q$ and choose $\varepsilon = \pm 1$. 
Let $J$ be the diagonal matrix with $J_{vv} = -1$ and $J_{qq}=1$ for $w \neq v$.
Let $E = (e_{pq})$ be the matrix with $v$th column $e_{qv} = \max(0,-\varepsilon b_{qv})$ and $0$ otherwise.
Then $M_Q(v, \varepsilon) = J + E$. 
\end{definition}

The matrix $M_Q$ can be used to mutate the $B$ and $C$-matrices (and sometimes the quasi-Cartan companion) associated with $Q$.
Recall that red and green vertices are determined by the signs of the $C$-matrix (Definition~\ref{def:redgreen}).

\begin{proposition}[{ {\cite[Theorem 2.8.3]{CAtextbook1-3}}, cf.\ {\cite[Lemma 3.1]{SevenAMatrices}}}]
\label{prop:A matrix mutation acyclic}
Fix an initial quiver~$Q_0$.
Choose a quiver $\tilde Q_t \in [\widehat Q_0]$, and let $Q = Q_t$ (the mutable part of $\tilde Q_t$).
Select a vertex~$v_i$ of $Q$, and let~$t \stackrel{i}{\text{---}} t'$.
Then:
\begin{itemize}
\item for either value of $\varepsilon=\pm1$, 
\begin{equation}
\label{eq:Bcong}
B_{t'} = M_Q(v_i, \varepsilon) B_t M_Q(v_i, \varepsilon)^T;
\end{equation}
\item the $C$-matrices satisfy $$ C_{t'} = \begin{cases} C_t M_Q(v_i, 1)^T & \text{if $v_i$ is green,} \\ C_t M_Q(v_i, -1)^T & \text{if $v_i$ is red;} \end{cases}$$ 
\item if the initial quiver $Q_0$ is acyclic then 
$$A_{t'} = \begin{cases} M_Q(v_i, 1) A_t M_Q(v_i, 1)^T & \text{if $v_i$ is green,} \\ M_Q(v_i, -1) A_t M_Q(v_i, -1)^T  & \text{if $v_i$ is red.}\end{cases}$$ 
\end{itemize}
\end{proposition}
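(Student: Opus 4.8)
The plan is to handle the three bullet points in turn, noting that the first two are essentially the classical matrix-mutation formulas (\cite[Theorem 2.8.3]{CAtextbook1-3}), while the third is a short formal consequence of the second together with Definition~\ref{def:A t}. So almost all of the work is bookkeeping: there is no deep obstacle, and the only point that genuinely requires care is matching the color of $v_i$ to the sign $\varepsilon$ in the $C$-matrix formula, which is exactly where sign-coherence enters.

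For the first bullet, I would verify \eqref{eq:Bcong} by comparing entries. Writing $M = M_Q(v_i, \varepsilon) = J + E$ and expanding $M B_t M^T$, the $J$-conjugation flips the sign of row and column $v_i$ (this is step (2) of the mutation rule, reversing all arrows at $v_i$), while the $E$-contributions produce precisely the products $\max(0,-\varepsilon b_{p v_i})\, b_{v_i q}$ that implement step (1), adding an arrow for each oriented path $v_p \to v_i \to v_q$; cancellation of oriented $2$-cycles (step (3)) is automatic because the result is again skew-symmetric. Independence of $\varepsilon$ is the identity $M_Q(v_i,1)B_t M_Q(v_i,1)^T = M_Q(v_i,-1)B_t M_Q(v_i,-1)^T$, valid because $B_t$ is skew-symmetric; this is the standard statement, which I would simply cite.

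For the second bullet, pass to the full exchange matrix $\widehat B_t$ of $\tilde Q_t$. Ordering mutable vertices before frozen ones, $\widehat B_t$ is a $2n \times 2n$ skew-symmetric matrix with $B_t$ in the mutable--mutable block, $C_t$ in the frozen--mutable block, $-C_t^T$ in the mutable--frozen block, and $0$ in the frozen--frozen block (this holds for $\widehat Q_0$ and is preserved by mutation at mutable vertices). Mutation of $\tilde Q_t$ at $v_i$ equals conjugation of $\widehat B_t$ by $\widehat M = \widehat J + \widehat E$, where the only nonzero column of $\widehat E$ is the $v_i$-column, with entries $\max(0,-\varepsilon\,(\widehat B_t)_{\bullet,v_i})$. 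The frozen entries of that column form the $\mathbf c$-vector $\mathbf c_{v_i;t}$, which by Theorem~\ref{thm:sign coherence} is entrywise $\ge 0$ if $v_i$ is green and entrywise $\le 0$ if $v_i$ is red. Choosing $\varepsilon = +1$ when $v_i$ is green and $\varepsilon = -1$ when $v_i$ is red therefore kills the frozen part of $\widehat E$, so $\widehat M = M_Q(v_i,\varepsilon) \oplus I_n$; applying the first bullet to the skew-symmetric matrix $\widehat B_t$ and reading off the frozen--mutable block of $\widehat M\,\widehat B_t\,\widehat M^T$ yields $C_{t'} = C_t\, M_Q(v_i,\varepsilon)^T$, which is the asserted formula. (Note this simple formula only holds for the color-adapted $\varepsilon$; for the other sign the block still equals $C_{t'}$, but not via this expression.)

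For the third bullet, since $Q_0$ is acyclic the matrices $A_{t_0}$ and $A_t = C_t^T A_{t_0} C_t$ are defined (Definition~\ref{def:A t}). Write $\varepsilon$ for the color-adapted sign and $M = M_Q(v_i,\varepsilon)$. Substituting the second bullet into the definition,
$$A_{t'} = C_{t'}^T A_{t_0} C_{t'} = (C_t M^T)^T A_{t_0} (C_t M^T) = M\, C_t^T A_{t_0} C_t\, M^T = M A_t M^T,$$
which is exactly the claimed formula (green gives $M_Q(v_i,1)$, red gives $M_Q(v_i,-1)$). The one thing I would double-check carefully throughout is the placement of transposes and the block conventions (whether $M$ or $M^T$ acts on a given side, and $C_t$ versus $-C_t^T$) against \cite[Theorem 2.8.3]{CAtextbook1-3} and \cite[Lemma 3.1]{SevenAMatrices}, so that the identities above assemble with the correct orientation.
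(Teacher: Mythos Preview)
Your proposal is correct and matches the paper's approach. The paper's proof is a single line---``The latter two formulas, involving $C_t$ and $A_t$, follow from their definitions and~\eqref{eq:Bcong}''---and you have simply filled in the details: applying~\eqref{eq:Bcong} to the extended exchange matrix $\widehat B_t$ (with sign-coherence selecting the color-adapted $\varepsilon$) to get the $C$-formula, and then substituting into $A_t = C_t^T A_{t_0} C_t$ to get the $A$-formula.
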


The latter two formulas, involving $C_t$ and $A_t$, follow from their definitions and~\eqref{eq:Bcong}.

\pagebreak[2]

\begin{example}
\label{eg:congruenceAB}
We demonstrate Proposition~\ref{prop:A matrix mutation acyclic}. 
Take our initial quiver~$Q_0$ to be the acyclic quiver with vertices and arrows $ v_1 \stackrel{2}{\rightarrow} v_3 \stackrel{}{\rightarrow} v_2$ and $v_1\stackrel{}{\rightarrow} v_2$. Let~$t \in \mathbb T_n$ satisfy 
$t_0 \stackrel{3}{\text{---}} t_1 \stackrel{1}{\text{---}} t_2 \stackrel{2}{\text{---}} t.$

Consider the matrices $B_t,$ and $C_t$, and their associated $A_t$ shown below.

$$B_t = \begin{pmatrix} 0 & 3 & -13 \\ -3 & 0 & 5 \\ 13 & -5 & 0 \end{pmatrix}, C_t = \begin{pmatrix} 8 & -3 & 0 \\ 3 & -1 & 0 \\ 3 & -1 & -1 \end{pmatrix},$$
$$A_t = \begin{pmatrix} 2 & -3 & 13 \\ -3 & 2 & -5 \\ 13 & -5 & 2 \end{pmatrix} = C_t^T \begin{pmatrix} 2 & -1 & -2 \\ -1 & 2 & -1 \\ -2 & -1 & 2 \end{pmatrix} C_t.$$
By inspecting the signs of (the columns of) $C_t$, we see that $v_1$ is green while~$v_2$ and~$v_3$ are red in~$Q_t$.

Let $t' \stackrel{1}{\text{---}}t$ (so $Q_{t'} = \mu_{v_1}(Q_t)$). 
We compute the mutation at $v_1$ and find
$$B_{t'} = 
\begin{pmatrix}
0 & -3 & 13 \\
3 & 0 & -34 \\
-13 & 34 & 0 
\end{pmatrix}.$$
The B-matrix $B_{t'}$ for $\mu_{v_1}(Q_t)$ can be computed from $B_t$ by performing either of two different congruences (corresponding to $M_{Q_t}(v_1, 1),$ and $M_{Q_t}(v_1, -1)$ respectively):
$$ 
\begin{pmatrix} 
-1 & 0 & 0 \\
3 & 1 & 0 \\
0 & 0 & 1
\end{pmatrix} B_t \begin{pmatrix}
-1 & 0 & 0 \\
3 & 1 & 0 \\
0 & 0 & 1
 \end{pmatrix}^T 
= \begin{pmatrix} 
-1 & 0 & 0 \\
0 & 1 & 0 \\
13 & 0 & 1
\end{pmatrix} B_t \begin{pmatrix} 
-1 & 0 & 0 \\
0 & 1 & 0 \\
13 & 0 & 1
\end{pmatrix}^T=B_{t'}.$$

However the same operations applied to $A_t$ do not both result in quasi-Cartan companions. 
In this case, because $v_1$ is green, congruence with $M_{Q_t}(v_1, 1)$ gives the quasi-Cartan companion $A_{t'}$:
$$ \begin{pmatrix} 
-1 & 0 & 0 \\
3 & 1 & 0 \\
0 & 0 & 1
 \end{pmatrix} \begin{pmatrix}  2 & -3 & 13 \\ -3 & 2 & -5 \\ 13 & -5 & 2 \end{pmatrix} 
 \begin{pmatrix} 
 -1 & 0 & 0 \\
3 & 1 & 0 \\
0 & 0 & 1 
\end{pmatrix}^T  =  
\begin{pmatrix} 
2 & -3 & -13 \\
-3 & 2 & 34 \\
-13 & 34 & 2 
\end{pmatrix}.$$ 
If instead we multiply with the matrix $M_{Q_t}(v_1, -1)$:
$$ \begin{pmatrix} 
-1 & 0 & 0 \\
0 & 1 & 0 \\
13 & 0 & 1
 \end{pmatrix} 
 \begin{pmatrix} 2 & -3 & 13 \\ -3 & 2 & -5 \\ 13 & -5 & 2
 \end{pmatrix} 
 \begin{pmatrix} 
 -1 & 0 & 0 \\
0 & 1 & 0 \\
13 & 0 & 1
 \end{pmatrix}^T= 
\begin{pmatrix} 
2 & 3 & -39 \\
3 & 2 & -44 \\
-39 & -44 & 678 
\end{pmatrix}.$$ 
Note in particular that one of the diagonal entries of the last matrix is not $2$.
\pagebreak[1]

If we instead consider $t'' \stackrel{3}{\text{---}}t$, 
then the associated $B$ and $C$ matrices are:
$$B_{t''} = 
\begin{pmatrix} 
0 & -62 & 13 \\
62 & 0 & -5 \\
-13 & 5 & 0
\end{pmatrix}, C_{t''} = 
\begin{pmatrix} 8 & -3 & 0 \\ 3 & -1 & 0 \\ 3 & -6 & 1 \end{pmatrix}.$$
In this case, only congruence with $M_{Q_t}(v_3, -1)$ gives a quasi-Cartan companion:
$$ \begin{pmatrix} 
1 & 0 & 0 \\
0 & 1 & 5 \\
0 & 0 & -1 
\end{pmatrix} A_t 
\begin{pmatrix} 
1 & 0 & 0 \\
0 & 1 & 5 \\
0 & 0 & -1 
\end{pmatrix}^T =  \begin{pmatrix} 
2 & 62 & -13 \\
62 & 2 & -5 \\
-13 & -5 & 2
\end{pmatrix}. $$
\end{example}

\begin{example}
\label{eg:Check signs of A t}
Taking $A_t, B_t$ and $Q_t$ as in Example~\ref{eg:congruenceAB}, we see the only positive off-diagonal entry of $A_t$ is $a_{v_1v_3;t} = 13$.
As predicted by Theorem~\ref{thm:seven A properties} we see $v_3 \rightarrow v_1$, $v_3$ is red, and $v_1$ is green in $Q_t$.
\end{example}

\newpage

\section{Cyclically ordered quivers}
\label{sec:coqs}

We recall many definitions, notations, and results from \cite{COQ}. 
We include some new examples, but do not repeat proofs.

\begin{definition}
\label{def:cyclic ordering}
A \emph{cyclic ordering} of a set $V$ is a linear ordering considered up to cyclic shifts---that is, taking the minimal element $v$ and forming a new linear ordering where $v > u$ for all $u \neq v$ (the order is otherwise unchanged). 
By repeated cyclic shifts, any element can be made maximal or minimal.
The cyclic ordering associated to a linear ordering $v_1 < \cdots < v_n$ will be denoted $(v_1, \ldots, v_n)$.
%
\end{definition}

\begin{definition}
\label{def:coq}
A \emph{cyclically ordered quiver} $(Q, \sigma)$ (abbreviated COQ) is a quiver~$Q$ equipped with a cyclic ordering of its (mutable) vertices $\sigma$. 
Likewise, a \emph{linearly ordered quiver} is a quiver equipped with a linear ordering of its (mutable) vertices.
We may omit the ordering when it is clear from context or not needed, and simply denote a COQ by $Q$.
There are $n$ linearly ordered quivers associated to each COQ, given by ``tearing'' the cyclic order between different consecutive vertices in the cyclic ordering.

We say an oriented path $u \rightarrow v \rightarrow w$ makes a \emph{right turn} at $v$ if $u < v < w$ in some linear order associated to $\sigma$.
\end{definition}

{ 
\newcommand\cyclicLabels[7]{%
	\filldraw[black] (#4,#5)++(135:0.8cm) circle (2pt) node[above left=-1pt] {$v_1$} coordinate (v_1);
	\filldraw[black] (#4,#5)++(45:0.8cm) circle (2pt) node[above right=-1pt] {$#1$} coordinate (#1);
	\filldraw[black] (#4,#5)++(-45:0.8cm) circle (2pt) node[below right=-1pt] {$#2$} coordinate (#2);
	\filldraw[black] (#4,#5)++(-135:0.8cm) circle (2pt) node[below left=-1pt] {$#3$} coordinate (#3);
	\draw[black, dashed, decoration={markings, mark=at position 0 with {\arrow{<}}}, postaction={decorate}] (#4,#5) circle (0.8cm);
	\draw[black, -{stealth}, shorten >=3pt, shorten <= 3pt] (v_1) -- (v_2);
	\draw[black, -{stealth}, shorten >=3pt, shorten <= 3pt] (v_2) -- (v_4) node [#6] {{\tiny $2$}};
	\draw[black, -{stealth}, shorten >=3pt, shorten <= 3pt] (v_3) -- (v_4);
	\draw[black, -{stealth}, shorten >=3pt, shorten <= 3pt] (v_1) -- (v_3);
	\draw[black, -{stealth}, shorten >=3pt, shorten <= 3pt] (v_1) -- (v_4) node [#7] {{\tiny $3$}};
	\draw (#4,#5-1) node[below] {{\small $(v_1, #1, #2, #3)$}};
}
\begin{figure}[ht]
\newcommand\hgap{2.3cm}
\includegraphics[alt={Six distinct cyclically ordered quivers with the same underlying quiver, which is described in the caption. Each is drawn with the vertices placed on a dashed circle, with that order written below the cyclically ordered quiver. From left to right the orders are (v1, v2, v3, v4), (v1, v3, v2, v4), (v1, v2, v4, v3), (v1, v3, v4, v2), (v1, v4, v3, v2), (v1, v4, v2, v3). }]{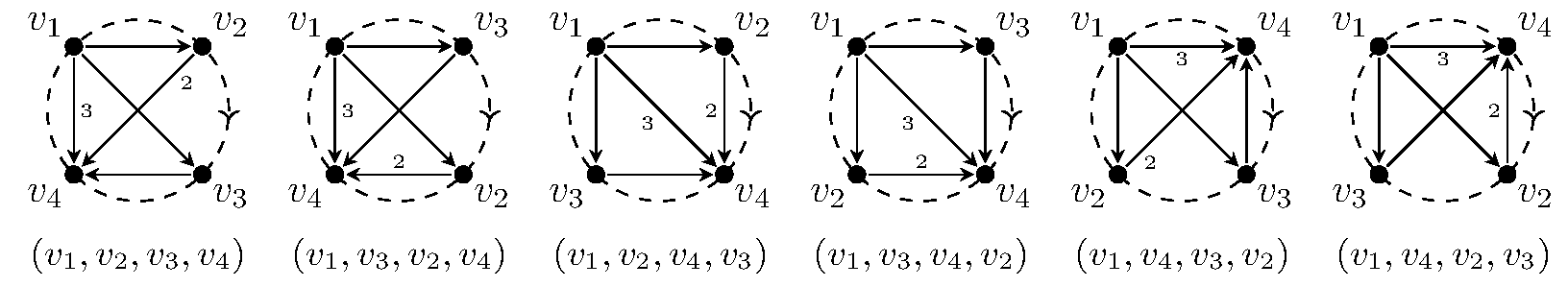}

\caption{The $4!/4 = 6$ distinct COQs on the $4$-vertex acyclic quiver with vertices and arrows $v_1 \stackrel{}{\rightarrow} v_2 \stackrel{2}{\rightarrow} v_4$, $v_1 \stackrel{}{\rightarrow} v_3 \stackrel {}{\rightarrow} v_4$, and $v_1 \stackrel 3 {\rightarrow} v_4$.}
\label{fig:cyclic orderings}
\end{figure}

\begin{definition}
\label{def:unipotent companion}
The \emph{unipotent companion} of a linearly ordered quiver $Q$ is the unipotent upper triangular matrix $U$ such that $ U^T - U = B_Q$, where the rows and columns of $B_Q$ are written in the linear ordering. 
The matrix $U$ can be constructed by negating $B_Q$, then setting the lower triangular part to $0$ and the diagonal entries to~$1$.
For a COQ~$Q$, the unipotent companion of any associated linearly ordered quiver is a unipotent companion of $Q$.
\end{definition}

\begin{example}
\label{eg:unipotents}
Consider the leftmost COQ $Q$ depicted in Figure~\ref{fig:cyclic orderings}.
This COQ has~$4$ associated linearly ordered quivers. We list their respective unipotent companions and linear orderings:
$$\begin{pmatrix} 1 & -1 &  -1 & -3 \\ 0 & 1 & 0 & -2 \\ 0 & 0 & 1 & -1\\ 0 & 0 & 0 & 1 \end{pmatrix} 
\quad \begin{pmatrix} 1 & 0 &  -2 & 1 \\ 0 & 1 & -1 & 1 \\ 0 & 0 & 1 & 3\\ 0 & 0 & 0 & 1 \end{pmatrix} 
\quad \begin{pmatrix} 1 & -1 &  1 & 0 \\ 0 & 1 & 3 & 2 \\ 0 & 0 & 1 & -1\\ 0 & 0 & 0 & 1 \end{pmatrix}
\quad \begin{pmatrix} 1 & 3 &  2 & 1 \\ 0 & 1 & -1 & -1 \\ 0 & 0 & 1 & 0\\ 0 & 0 & 0 & 1 \end{pmatrix} $$
$$ \hspace{0.15cm} v_1< v_2< v_3<v_4 \quad \quad v_2< v_3<v_4 < v_1 \hspace{0.6cm} v_3<v_4 < v_1< v_2 \quad  \quad v_4 < v_1< v_2< v_3$$
\hide{
\begin{tabular}{ c c c c }
$\begin{pmatrix} 1 & -1 &  -1 & -3 \\ 0 & 1 & 0 & -2 \\ 0 & 0 & 1 & -1\\ 0 & 0 & 0 & 1 \end{pmatrix}$ &
$\quad \begin{pmatrix} 1 & 0 &  -2 & 1 \\ 0 & 1 & -1 & 1 \\ 0 & 0 & 1 & 3\\ 0 & 0 & 0 & 1 \end{pmatrix}$ & 
$\quad \begin{pmatrix} 1 & -1 &  1 & 0 \\ 0 & 1 & 3 & 2 \\ 0 & 0 & 1 & -1\\ 0 & 0 & 0 & 1 \end{pmatrix}$ &
$\quad \begin{pmatrix} 1 & 3 &  2 & 1 \\ 0 & 1 & -1 & -1 \\ 0 & 0 & 1 & 0\\ 0 & 0 & 0 & 1 \end{pmatrix}$ \\
$a < b < c < d$ & $b < c < d < a$ & $c < d < a < b$  & $d < a < b < c$
\end{tabular}
}
\end{example}

\begin{proposition}[{\cite[Proposition 4.4]{COQ}}]
\label{prop:cyclic shift of U}
All unipotent companions of a COQ are integrally congruent.
\end{proposition}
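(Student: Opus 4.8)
The plan is to reduce the statement to a single elementary move and then exhibit an explicit integral congruence matrix. By Definition~\ref{def:unipotent companion}, the unipotent companions of a COQ $(Q,\sigma)$ are the unipotent companions of its $n$ associated linearly ordered quivers, and these linear orders are obtained from any one refinement $v_1 < \cdots < v_n$ of $\sigma$ by iterated cyclic shifts. Since integral congruence of matrices is an equivalence relation, it suffices to show that a single cyclic shift---say, moving the minimum $v_1$ to become the maximum, so that the order becomes $v_2 < \cdots < v_n < v_1$---replaces the unipotent companion by an integrally congruent matrix. Iterating then connects all $n$ companions.

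Next I would write everything in block form with respect to the splitting $\{v_1\}\sqcup\{v_2,\ldots,v_n\}$. Let $c\in\ZZ^{n-1}$ collect the entries $b_{v_1 v_j}$ for $j\geq 2$. With respect to $v_1 < \cdots < v_n$,
\[
B_Q=\begin{pmatrix}0 & c^{T}\\ -c & B'\end{pmatrix},\qquad U=\begin{pmatrix}1 & -c^{T}\\ 0 & V\end{pmatrix},
\]
where $V$ is the unipotent companion of the induced linearly ordered subquiver on $v_2 < \cdots < v_n$ (indeed $V$ is unipotent upper triangular with $V^{T}-V=B'$). The key observation is that this subquiver, together with its order, is left untouched by the cyclic shift. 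Let $\widetilde U$ denote the unipotent companion of $(Q, v_2 < \cdots < v_n < v_1)$, written back in the original indexing $v_1,\ldots,v_n$. Since $v_1$ is now the maximal vertex, upper-triangularity of $\widetilde U$ in the new order forces its first row to vanish off the diagonal; the relation $\widetilde U^{T}-\widetilde U=B_Q$ then pins down its first column; and its $\{v_2,\ldots,v_n\}$-block is again $V$. Hence
\[
\widetilde U=\begin{pmatrix}1 & 0\\ c & V\end{pmatrix}.
\]

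The final step is to check that the elementary matrix $G=\begin{pmatrix}1 & 0\\ c & I_{n-1}\end{pmatrix}\in\GL_n(\ZZ)$ (determinant $1$) satisfies $GUG^{T}=\widetilde U$, which is a two-line block multiplication:
\[
GUG^{T}=\begin{pmatrix}1 & -c^{T}\\ c & V-cc^{T}\end{pmatrix}\begin{pmatrix}1 & c^{T}\\ 0 & I_{n-1}\end{pmatrix}=\begin{pmatrix}1 & 0\\ c & V\end{pmatrix}=\widetilde U .
\]
Applying this to each of the $n-1$ successive cyclic shifts and composing the resulting $G$'s proves the proposition.

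I do not anticipate a genuine obstacle: the computation is routine, and the only point requiring care is confirming that the $(n-1)\times(n-1)$ block $V$ really is unchanged by the move---which is precisely where it matters that tearing the cyclic order is done between a \emph{consecutive} pair, rather than via an arbitrary reordering of the vertices. As a byproduct, the conjugacy class of the cosquare $U^{-T}U$ becomes a well-defined invariant of the COQ $(Q,\sigma)$ itself, not merely of a chosen tearing.
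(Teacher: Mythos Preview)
Your argument is correct: reducing to a single cyclic shift and exhibiting the explicit block matrix $G=\begin{pmatrix}1&0\\ c&I_{n-1}\end{pmatrix}$ is exactly the natural approach, and your block computation $GUG^{T}=\widetilde U$ checks out (with the understanding, which you note, that $\widetilde U$ is being written in the original vertex indexing rather than in its own linear order, so one composes with a permutation matrix---also in $\GL_n(\ZZ)$---to reach the companion as literally displayed in Example~\ref{eg:unipotents}). The present paper cites this result from~\cite{COQ} without reproducing a proof, so there is nothing here to compare against directly; your write-up matches the standard argument.
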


Recall that the integral congruence class of a unipotent companion $U$ is the set of matrices $\{ G U G^T | G \in \GL_n(\ZZ)\}$.

\begin{definition} 
\label{def:wiggle}
A \emph{wiggle} is a transformation of a COQ which fixes the quiver $Q$, and transposes two adjacent vertices in the cyclic ordering that are not adjacent in $Q$.
We say that two COQs are wiggle equivalent if they have the same quiver, and their cyclic orderings are related by a sequence of wiggles. 
The wiggle equivalence class of a COQ is the set of all wiggle equivalent COQs, see Figure~\ref{fig:wiggle classes}.
If $U$ is a unipotent companion of some COQ in a wiggle equivalence class, we say that $U$ is a unipotent companion of the class.
\end{definition}

\begin{figure}[ht]
\newcommand\hgap{2cm}
\newcommand\h{1.7cm}
\includegraphics[alt={The COQs from Figure~\ref{fig:cyclic orderings} arranged according to their wiggle equivalence classes. In the top left, we see the COQs with orderings (v1, v2, v3, v4) and (v1,v3,v2,v4), related by a wiggle at (v2 v3). Note that v2 and v3 have no arrows between them, but are adjacent in these cyclic orderings. In the top right, the COQ with cyclic ordering (v1, v2, v4, v3). Note that each adjacent pair of vertices on the cycle are also adjacent in the quiver, so there are no wiggles possible. In the bottom left we have the COQ with ordering (v1, v3, v4, v2). Again no wiggles are possible. Finally in the bottom right we have the COQs with cyclic orderings (v1, v4, v3, v2) and (v1, v4, v2, v3). These COQs are related by a wiggle at (v2 v3).}]{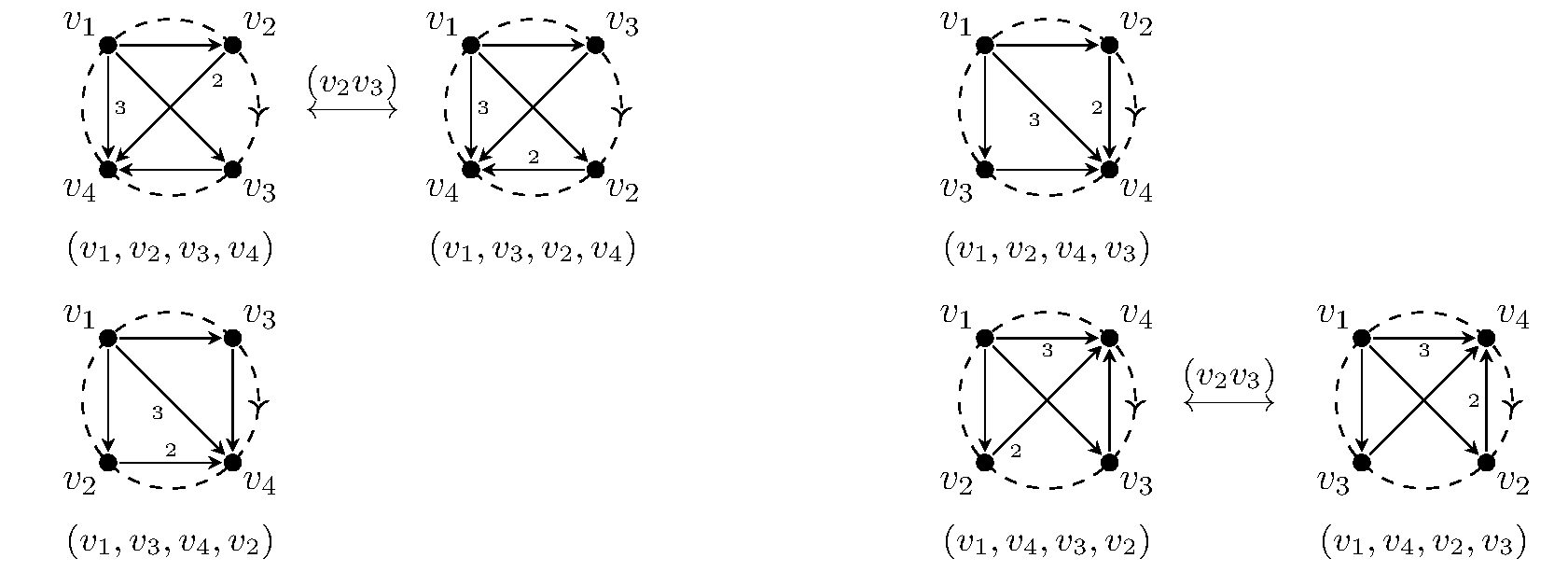}

\caption{The $4$ wiggle equivalence classes of the COQs in Figure~\ref{fig:cyclic orderings}.
}
\label{fig:wiggle classes}
\end{figure}
} 

\begin{proposition}[{\cite[Proposition 4.5]{COQ}}]
\label{prop:wiggle preserves U}
All unipotent companions of wiggle equivalent COQs are integrally congruent.
\end{proposition}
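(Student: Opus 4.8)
The plan is to reduce the statement to a single wiggle and then to one well-chosen pair of linear orderings, leaning on Proposition~\ref{prop:cyclic shift of U} together with the transitivity of integral congruence. Since wiggle-equivalent COQs are joined by a finite chain of single wiggles, and since integral congruence is an equivalence relation, it suffices to prove that performing one wiggle does not change the integral congruence class of the unipotent companion. Moreover, Proposition~\ref{prop:cyclic shift of U} tells us that all unipotent companions of a fixed COQ are already integrally congruent, so for each of the two COQs involved in the wiggle I am free to choose any linear ordering compatible with its cyclic ordering.

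Next I would set up notation. Write the cyclic ordering of the first COQ as $\sigma = (w_1, \dots, w_n)$, and suppose the wiggle transposes the adjacent pair $w_j, w_{j+1}$, which by the definition of a wiggle have no arrows between them in $Q$; the resulting cyclic ordering is $\sigma' = (w_1, \dots, w_{j-1}, w_{j+1}, w_j, w_{j+2}, \dots, w_n)$. For the first COQ I tear $\sigma$ to the linear order $w_1 < \dots < w_n$, and for the second I tear $\sigma'$ to the linear order $w_1 < \dots < w_{j-1} < w_{j+1} < w_j < w_{j+2} < \dots < w_n$, i.e.\ the same list with $w_j$ and $w_{j+1}$ swapped. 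Let $U_1$ and $U_2$ be the corresponding unipotent companions, let $B$ be the exchange matrix of the first linearly ordered quiver, and let $P$ be the permutation matrix realizing the transposition of the $j$-th and $(j+1)$-st coordinates, so that the exchange matrix of the second linearly ordered quiver is $P B P^T$.

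The heart of the argument is the claim that $U_2 = P U_1 P^T$. Passing from $U_1$ to $P U_1 P^T$ just swaps rows $j, j{+}1$ and columns $j, j{+}1$, so the diagonal remains all $1$'s, and the only entry that could land strictly below the diagonal is the $(j{+}1, j)$ entry, which equals $(U_1)_{j, j+1} = -b_{w_j w_{j+1}}$. This vanishes precisely because $w_j$ and $w_{j+1}$ are not adjacent in $Q$ --- this is exactly where the ``not adjacent in $Q$'' hypothesis in the definition of a wiggle enters. Hence $P U_1 P^T$ is unipotent upper triangular, and since $(P U_1 P^T)^T - P U_1 P^T = P(U_1^T - U_1) P^T = P B P^T$, the uniqueness clause in the definition of the unipotent companion forces $U_2 = P U_1 P^T$. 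Because $P \in \GL_n(\ZZ)$, this exhibits $U_1$ and $U_2$ as integrally congruent; chaining this with Proposition~\ref{prop:cyclic shift of U} applied to each of the two COQs shows that every unipotent companion of the first is integrally congruent to every unipotent companion of the second, and induction along the chain of wiggles completes the proof.

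I expect the only genuine subtlety --- and the step I would spell out most carefully --- is checking that $P U_1 P^T$ is \emph{actually} upper triangular (not merely congruent to an upper triangular matrix): the bookkeeping that every entry except the forced-to-vanish $(j{+}1,j)$ entry stays weakly above the diagonal. Everything else (the reduction to one wiggle, the permutation-matrix identities, and the appeal to uniqueness of the unipotent companion) is formal.
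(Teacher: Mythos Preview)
Your argument is correct. The reduction to a single wiggle, the choice of tearing so that the two swapped vertices sit in positions $j$ and $j{+}1$, and the verification that $P U_1 P^T$ is unipotent upper triangular (with the $(j{+}1,j)$ entry vanishing precisely because $b_{w_j w_{j+1}}=0$) are all sound; the appeal to Proposition~\ref{prop:cyclic shift of U} cleanly handles both the freedom in tearing and the wrap-around case where the wiggle would otherwise involve $w_n$ and $w_1$.

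As for comparison with the paper: this proposition is quoted from \cite{COQ} and the present paper does not reproduce its proof (Section~\ref{sec:coqs} explicitly says it recalls results from \cite{COQ} without repeating proofs). Your permutation-matrix argument is the natural one and is essentially what one expects the original proof in \cite{COQ} to be.
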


\begin{definition} 
\label{def:winding formula}
Fix a COQ $(Q, \sigma)$.
Let $\mathcal O = (w_0 - w_1 - \cdots - w_k=w_0)$ be a cycle in $K_Q$ (Definition~\ref{def:undirectedCycle}).
Fix a linear order $<$ associated to~$\sigma$.
The \emph{winding number} $\wind_\sigma(\mathcal O)$ is the (signed) number of times we ``wrap around'' the linear order: 
\begin{equation}
\label{eq:wind formula}
\wind_\sigma(\mathcal O) = \# \{ w_i | w_i \rightarrow w_{i+1}, w_i >  w_{i+1} \} -\#\{ w_i | w_i \leftarrow w_{i+1}, w_i <  w_{i+1}  \}.
\end{equation}
\end{definition}

We omit the (straightforward) proofs that $\wind_\sigma(\mathcal O)$ does not depend on the choice of linear order associated to $\sigma$, and that this definition agrees with \cite[Definition~2.10]{COQ}.

\begin{example}
Let $Q$ be the underlying quiver of the COQs shown in Figure~\ref{fig:wiggle classes}, and
consider the (chordless) cycle $\mathcal O = (v_1 - v_2 - v_4 - v_1)$ in $K_Q$.
The winding number of this undirected cycle is $0$ in the three COQs in the top row and $1$ in the three COQs in the bottom row. 
\end{example}

\begin{remark}
\label{rem:winding number topology}
The winding number corresponds to the winding number of certain maps from the cycle (as a cell complex) to the circle.
Specifically, map the vertices onto the circle according to the cyclic ordering (\emph{not} the order of traversal), and then map each edge to an arc between the endpoints, starting from the tail of an associated arrow and traveling clockwise to the head. See Figure~\ref{fig:map to S1}.
\end{remark}

\begin{figure}[ht]
\includegraphics[alt={On the left, a COQ with quiver whose arrows are v1 to v2, v2 to v4, v1 to v3, v3 to v4, and whose cyclic ordering is (v1, v2, v3, v4).}]{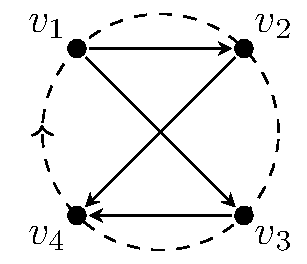}
\quad \quad \quad
\includegraphics[alt={On the right, the same vertices drawn on a dashed circle in the same positions as the COQ, but the arrows have been drawn along the dashed circle. The arrow from v1 to v2 now lies on the circular arc clockwise from the location of v1 (135 degrees on the circle) to the location of v2 (45 degrees). The arrow from v2 to v3 lies on the circular arc clockwise from the location of v2 to the location of v4 (-135 degrees). The arrow from v3 to v4 lies on the circular arc clockwise from the location of v3 (-45 degrees) to the location of v4. The arrow from v1 to v3 lies on the circular arc clockwise from the location of v1 to the location of v3. In particular, none of the arrows go through the point at 180 degrees, while they cover each point from 135 degrees to -135 degrees (traveling clockwise) twice.}]{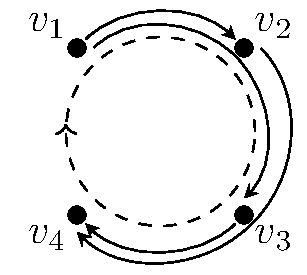}

\caption{
A COQ $Q$ and the image of the cycle $\mathcal O = (v_1 - v_2 - v_4 - v_3 - v_1)$ in $K_Q$, with the arrows drawn on the circle. 
In this case the image is contractible, and so the winding number is $0$. 
}
\label{fig:map to S1}
\end{figure}

\begin{theorem}[{\cite[Theorem 2.14]{COQ}}]
\label{th:wiggle/winding}
Two COQs $(Q, \sigma)$ and $(Q, \sigma')$ are wiggle equivalent if and only if $\wind_{\sigma}(\mathcal O) = \wind_{\sigma'}(\mathcal O)$ for every undirected cycle $\mathcal O$ in $K_Q$.
\end{theorem}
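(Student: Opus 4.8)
The plan is to prove the two implications separately. I begin by recording two reductions: $\wind_\sigma$ is additive under concatenation of cycles and depends only on the class of a cycle in $H_1(K_Q;\ZZ)$ (in particular backtracking contributes nothing), so it suffices to compare winding numbers on a basis of $H_1(K_Q;\ZZ)$ — for instance the fundamental cycles of the non-tree edges of a fixed spanning forest of $K_Q$; and for the ``only if'' direction it suffices to check invariance under a single wiggle.

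\emph{Only if.} A wiggle transposes two vertices $u,v$ that are consecutive in the cyclic ordering but non-adjacent in $K_Q$. Choose linear representatives of the old and new cyclic orderings that agree except that $u$ and $v$ are interchanged. Since no third vertex lies between them, the relative order of every pair $\{x,y\}\neq\{u,v\}$ is unchanged, and in \eqref{eq:wind formula} the orientation of each arrow depends only on $Q$. Because $u$ and $v$ are non-adjacent in $K_Q$, the unordered pair $\{u,v\}$ never occurs as a consecutive pair $\{w_i,w_{i+1}\}$ of a cycle $\mathcal O$ in $K_Q$, so every summand of $\wind_\sigma(\mathcal O)$ is unaffected; hence $\wind$ is wiggle-invariant.

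\emph{If.} I would induct on the number of edges of $K_Q$. If $K_Q$ is a forest, every cycle has winding number $0$, so the hypothesis is automatic, and one must show that all cyclic orderings of the vertex set are wiggle equivalent: this follows by a separate induction on the number of vertices (take a leaf $\ell$ with neighbor $p$; since $\ell$ is non-adjacent to everything but $p$ it can be slid into the slot immediately after $p$, and then the remaining vertices are handled inductively, moving $\ell$ momentarily out of the way when needed). If $K_Q$ is not a forest, pick an edge $e$ between vertices $u$ and $v$ that lies on a simple cycle $\mathcal O_0$, and let $Q^-$ be $Q$ with all arrows between $u$ and $v$ deleted, so $K_{Q^-}=K_Q\setminus e$ has one fewer edge and the theorem holds for $Q^-$ by induction. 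Every $Q$-wiggle is a $Q^-$-wiggle and the cycles of $K_{Q^-}$ are among those of $K_Q$, so the hypothesis yields a $Q^-$-wiggle path $\sigma=\sigma_0,\dots,\sigma_m=\sigma'$ whose only steps that fail to be $Q$-wiggles are transpositions of $u$ and $v$. By the ``only if'' direction applied to $Q$, each $Q$-wiggle step of this path preserves $\wind(\mathcal O_0)$, while each $u\leftrightarrow v$ step changes $\wind(\mathcal O_0)$ by exactly $\pm1$ (only the term of $\mathcal O_0$ for the edge $e$ is affected); since the net change along the path is $\wind_{\sigma'}(\mathcal O_0)-\wind_\sigma(\mathcal O_0)=0$, the $u\leftrightarrow v$ steps split into equally many of each sign. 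It then remains to delete these steps in opposite-sign pairs — commute such a pair toward each other past the intervening $Q$-wiggles and cancel them — and iterate, converting the path into a genuine $Q$-wiggle path; this closes the induction.

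The main obstacle is precisely this elimination step. A $u\leftrightarrow v$ transposition does not literally commute with an adjacent $Q$-wiggle, because the two swaps may act on overlapping positions of the circle; making the cancellation rigorous requires a small toolkit of elementary moves (sliding $u$, or $v$, past one adjacent vertex at a time while leaving the induced ordering of the other vertices, and hence all winding numbers, unchanged) together with a verification that any two consecutive $u\leftrightarrow v$ transpositions of opposite sign can be brought together and annihilated. Everything else — the reduction to a cycle basis, the additivity of $\wind$, the forest base case, and the ``only if'' direction — is routine.
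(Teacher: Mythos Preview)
This statement is quoted from~\cite{COQ} and is not proved in the present paper, so there is no ``paper's own proof'' to compare against; I therefore assess your argument on its own merits.

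Your ``only if'' direction is correct, and the inductive scaffolding for the ``if'' direction (forest base case; delete an edge $e=\{u,v\}$ on a cycle~$\mathcal O_0$; lift a $Q^-$-wiggle path; pair off the illegal $u\leftrightarrow v$ steps by sign using $\wind(\mathcal O_0)$) is sound. The gap is exactly where you locate it---the cancellation of a consecutive opposite-sign pair---and it is more serious than your sketch suggests. What must be shown is: given a $Q$-wiggle path from $(v,u,X)$ to $(v,u,Y)$, produce a $Q$-wiggle path from $(u,v,X)$ to $(u,v,Y)$. Your proposal is to ``commute'' the bounding swaps through the intervening $Q$-wiggles, but a $u\leftrightarrow v$ swap does not commute with a wiggle that moves $u$ or $v$: if the path moves $u$ past a vertex $x$ (legal because $u\not\sim_Q x$), the mirror move on the swapped side would have to move $v$ past $x$, and $v$ may well be $Q$-adjacent to $x$ (indeed $v$ has a neighbour on~$\mathcal O_0$ other than $u$). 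The phrase ``sliding $u$, or $v$, past one adjacent vertex at a time while leaving the induced ordering of the other vertices unchanged'' does not address this, since the obstruction is precisely that the needed slide may be unavailable on the swapped side. Nor can one appeal to the theorem itself (which would instantly give the cancellation lemma via winding numbers), since that is what is being proved.

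So as written the ``if'' direction is incomplete: the cancellation step is the heart of the argument, and closing it seems to require a genuinely new ingredient---for instance, a topological model in which wiggle equivalence becomes homotopy of the map $K_Q\to S^1$ of Remark~\ref{rem:winding number topology} (so that equality of winding numbers, i.e.\ of the induced map on $H_1$, forces homotopy), or a normal-form argument within each prospective wiggle class---rather than a local commutation trick.
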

\pagebreak[2]

We now turn to mutations of COQs.

\begin{definition} 
\label{def:proper}
A vertex $v$ in a COQ $(Q, \sigma)$ is \emph{proper} if every oriented path ${u \rightarrow v \rightarrow w}$ makes a right turn at $v$ (Definition~\ref{def:coq}).
The wiggle equivalence class is \emph{proper} if for every vertex $v$ there is a COQ in the wiggle equivalence class in which $v$ is proper.
\end{definition}


\begin{example}
In the wiggle equivalence classes shown in Figure~\ref{fig:wiggle classes}, vertices $v_1, v_4$ are proper vertices in each class, as they have no oriented paths through them. 
Vertex~$v_2$ is proper only in the three COQs in the top row, while vertex $v_3$ is proper only in the three COQs in the leftmost column(s).
Thus the wiggle equivalence class in the top left, using the cyclic orderings $(v_1, v_2, v_3, v_4)$ and $(v_1, v_3, v_2, v_4)$, is the only class which is proper.
\end{example}

\begin{definition}  
To mutate a COQ $Q$ at a proper vertex $v$, mutate the quiver as usual and reposition $v$ in the cyclic ordering so that $v$ is still proper (that is, move $v$ clockwise in the cyclic ordering past all the elements of $\In(v)$ in the mutated quiver, without passing any vertices in $\Out(v)$; 
all choices give wiggle equivalent COQs).
We denote the resulting wiggle equivalence class by $\mu_v(Q)$.
The \emph{proper mutation class} of a COQ is the set of all COQs which can be obtained from the original by a sequence of proper mutations and wiggles.
\end{definition}

{ 
\newcommand\cyclicVerts[5]{%
	\filldraw[black] (#4,#5)++(135:0.8cm) circle (2pt) node[above left=-1pt] {$v_1$} coordinate (v_1);
	\filldraw[black] (#4,#5)++(45:0.8cm) circle (2pt) node[above right=-1pt] {$#1$} coordinate (#1);
	\filldraw[black] (#4,#5)++(-45:0.8cm) circle (2pt) node[below right=-1pt] {$#2$} coordinate (#2);
	\filldraw[black] (#4,#5)++(-135:0.8cm) circle (2pt) node[below left=-1pt] {$#3$} coordinate (#3);
	\draw[black, dashed, decoration={markings, mark=at position 0 with {\arrow{<}}}, postaction={decorate}] (#4,#5) circle (0.8cm);
}
\begin{figure}[ht]
\includegraphics[alt={A sequence of three proper mutations between four COQs. Starting on the left, the quiver has vertices v1, v2, v3, v4 (with that cyclic ordering), and an arrow from v2 to v1, v3 to v1, v3 to v4, as well as two arrows from v2 to v4 and 3 arrows from v4 to v1. In particular, v1 is a sink vertex. Thus there are no oriented paths through v1, and so v1 is proper.}]{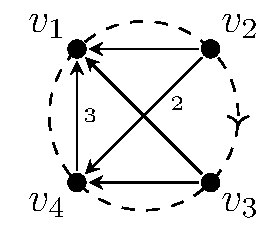}
\raisebox{1cm}{$\stackrel{\mu_{v_1}}{\longleftrightarrow}$}
\includegraphics[alt={The first mutation is at v1, which only reverses the arrows touching v1, and leaves the cyclic ordering unchanged. Notice that the only oriented path of length two through v2 starts at v1 and ends at v4. This is a right turn, so v2 is proper. }]{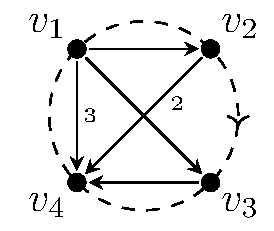}
\raisebox{1cm}{$\stackrel{\mu_{v_2}}{\longleftrightarrow}$}
\includegraphics[alt={We mutate at v2 to arrive at a new COQ with cyclic ordering (v1, v3, v4, v2) and an arrow from v1 to v3, v3 to v4, v2 to v1, as well as five arrows from v1 to v4 and two arrows from v4 to v2. As v2 had an oriented path through it, we have changed its position in the cyclic ordering. }]{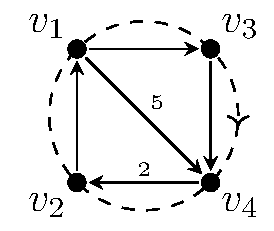}
\raisebox{1cm}{$\stackrel{\mu_{v_3}}{\longleftrightarrow}$}
\includegraphics[alt={We next wish to mutate at v3. It is proper as the only oriented 2-path goes from v1 to v4, and v1, v3, v4 is a right turn. Proper mutation at v3 results in the COQ with cyclic ordering (v1, v4, v3, v2) and a single arrow from v2 to v1, v3 to v1, v4 to v3, as well as two arrows from v4 to v2 and six arrows from v1 to v4.}]{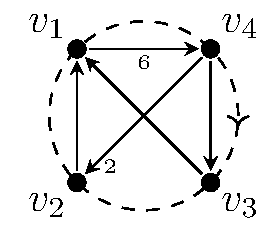}

\caption{A sequence of proper mutations.}
\label{fig:proper mutations}
\end{figure}
}

\begin{proposition}[{\cite[Proposition 6.4]{COQ}}]
\label{prop:proper well def}
Proper mutation is well defined up to wiggle equivalence---that is, if two COQs are wiggle equivalent and a vertex $v$ is proper in each, then performing the proper mutation $\mu_v$ to each results in the same wiggle equivalence class of COQs.
\end{proposition}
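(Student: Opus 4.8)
The plan is to work entirely with the winding-number description of wiggle equivalence. Recall that the proper mutation of $(Q,\sigma)$ at $v$ produces the quiver $Q':=\mu_v(Q)$, which does not depend on $\sigma$, together with a cyclic ordering $\tilde\sigma$ obtained by moving $v$ clockwise past $\In_{Q'}(v)=\Out_Q(v)$ and not past $\Out_{Q'}(v)=\In_Q(v)$; the permissible positions of $v$ differ by wiggles, so $\wind_{\tilde\sigma}$ is a well-defined functional on the cycles of $K_{Q'}$ (wiggles preserve winding numbers, by Theorem~\ref{th:wiggle/winding}). In view of Theorem~\ref{th:wiggle/winding}, it therefore suffices to show that the function $\mathcal O\mapsto\wind_{\tilde\sigma}(\mathcal O)$ on cycles of $K_{Q'}$ is determined by the function $\mathcal O\mapsto\wind_\sigma(\mathcal O)$ on cycles of $K_Q$, together with data that depends only on $Q$ and $v$. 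Granting this, if $(Q,\sigma)$ and $(Q,\sigma')$ are wiggle equivalent then $\wind_\sigma\equiv\wind_{\sigma'}$ by Theorem~\ref{th:wiggle/winding}, hence $\wind_{\tilde\sigma}\equiv\wind_{\tilde\sigma'}$, and one more application of Theorem~\ref{th:wiggle/winding} gives $\mu_v(Q,\sigma)=\mu_v(Q,\sigma')$.

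The key step is to make this determination explicit. Define a homomorphism $\Phi$ from $Z_1(K_{Q'};\ZZ)$ to $Z_1(K_Q;\ZZ)$ by specifying it on directed edges of $K_{Q'}$ and extending linearly to $1$-chains: an edge incident to $v$ is sent to the same edge of $K_Q$ (the endpoints of $v$ are unchanged by the mutation); an edge that is neither incident to $v$ nor joins $\In_Q(v)$ to $\Out_Q(v)$ is likewise sent to itself (such an edge is unaffected by the mutation); and an edge joining $u\in\In_Q(v)$ to $w\in\Out_Q(v)$ is sent to the length-two path $u-v-w$. One checks that $\Phi$ commutes with the boundary map, so it carries cycles to cycles; since $\wind$ is additive under concatenation and changes sign under reversal, $\wind_\sigma\circ\Phi$ is again a homomorphism on $Z_1(K_{Q'})$. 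The claim, which by the previous paragraph finishes the proof, is that there is a homomorphism $c\colon Z_1(K_{Q'};\ZZ)\to\ZZ$ depending only on $Q$ and $v$ such that
\begin{equation}
\label{eq:windmutprop}
\wind_{\tilde\sigma}(\mathcal O)=\wind_\sigma(\Phi(\mathcal O))+c(\mathcal O)\qquad\text{for every cycle }\mathcal O\text{ in }K_{Q'}.
\end{equation}

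To prove \eqref{eq:windmutprop}, note that both sides are additive in $\mathcal O$ and that $Z_1(K_{Q'};\ZZ)$ is generated by chordless cycles (a simple cycle with a chord is a sum of two shorter cycles sharing the chord, which cancels), so it suffices to verify \eqref{eq:windmutprop}, and the $\sigma$-independence of $c(\mathcal O)$, for chordless $\mathcal O$. For this I would use the circle model of Remark~\ref{rem:winding number topology}, in which $\wind$ is the degree of the loop obtained by placing the vertices on a circle in the given cyclic order and drawing each arrow as the clockwise arc from its tail to its head. The orderings $\sigma$ and $\tilde\sigma$ agree on every vertex $\neq v$; and since $v$ is proper in $(Q,\sigma)$, every vertex of $\Out_Q(v)$ precedes every vertex of $\In_Q(v)$ clockwise from $v$, while the proper-mutation rule places $v$ on the far side of the $\Out_Q(v)$-block in $\tilde\sigma$. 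Thus the position of $v$ relative to the two blocks is, up to wiggles, intrinsic to $Q$ and $v$ in both orderings. For a chordless cycle $\mathcal O$ avoiding $v$, properness forces $v$ to lie clockwise-between the two endpoints of every $\In_Q(v)$--$\Out_Q(v)$ edge (the path $u\to v\to w$ makes a right turn), so each detour in $\Phi(\mathcal O)$ retraces the arc it replaces, wrapping once more around the circle exactly when the corresponding edge of $Q'$ is directed from $\Out_Q(v)$ to $\In_Q(v)$ — a condition on $Q$ and $v$ alone — whence $c(\mathcal O)$ is a sum of signs $\pm1$, one per such edge, determined by the traversal. For a chordless cycle $\mathcal O$ through $v$, one additionally records how the contributions of the two $v$-edges of $\mathcal O$ change when their arrows are returned to their $Q$-orientation and $v$ is moved from its $\tilde\sigma$-position to its $\sigma$-position, and checks that this change depends only on which of $\In_Q(v),\Out_Q(v)$ the far endpoints lie in and on the traversal. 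Collecting these contributions yields the homomorphism $c$ and \eqref{eq:windmutprop}.

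The step I expect to be the main obstacle is exactly this last arc-by-arc bookkeeping: showing that every discrepancy between the two loops in \eqref{eq:windmutprop} is accounted for by a quantity not depending on the cyclic ordering. The awkward configurations are the $\In_Q(v)$--$\Out_Q(v)$ edges that were already present in $Q$ — equivalently, the directed $3$-cycles $u\to v\to w\to u$, for which the mutation reorients or deletes the edge $u-w$ — and the chordless cycles through $v$, where the repositioning of $v$ changes the winding directly; in both situations one must invoke properness (to keep each detour homotopic to the direct arc up to a full turn) and the precise proper-mutation rule (to keep the new position of $v$ intrinsic). Once these local computations are settled, \eqref{eq:windmutprop}, and with it the proposition, follows.
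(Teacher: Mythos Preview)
This paper does not prove Proposition~\ref{prop:proper well def}: it is quoted from \cite[Proposition~6.4]{COQ}, and Section~\ref{sec:coqs} explicitly states that proofs from \cite{COQ} are not reproduced. So there is no argument in the present paper to compare yours against.

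On its own merits, your strategy is the natural one and should succeed: reduce to Theorem~\ref{th:wiggle/winding}, build a chain map $\Phi$ that replaces each $\In_Q(v)$--$\Out_Q(v)$ edge of $K_{Q'}$ by the length-$2$ detour through~$v$ in $K_Q$, and show that $\wind_{\tilde\sigma}-\wind_\sigma\circ\Phi$ is a homomorphism $c$ depending only on the combinatorics of $Q$ near~$v$. But what you have written is a plan rather than a proof. All of the content lies in the ``arc-by-arc bookkeeping'' that you correctly flag as the obstacle and then do not carry out: for each $\In_Q(v)$--$\Out_Q(v)$ edge of $K_{Q'}$ you must verify that the discrepancy between the detour and the direct arc is a sign determined solely by the orientation of that edge in~$Q'$ (hence by $B_Q$ alone); and for chordless cycles through~$v$ you must verify that reversing the $v$-arrows and sliding $v$ past the $\Out_Q(v)$-block changes the winding by an amount depending only on which of $\In_Q(v)$, $\Out_Q(v)$ the two neighbours of $v$ on the cycle belong to. These checks are routine once set up carefully, but they \emph{are} the proof, and they are absent. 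One small point worth making explicit along the way: properness of $v$ in $(Q,\sigma)$ forces \emph{all} of $\Out_Q(v)$ to lie on one cyclic side of~$v$ and \emph{all} of $\In_Q(v)$ on the other --- an interleaving would create a path $u\to v\to w$ that is not a right turn --- and this is what makes your phrase ``the position of $v$ relative to the two blocks'' meaningful.
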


\begin{theorem}[{\cite[Theorem 7.1]{COQ}}]
\label{thm:payoff}
The unipotent companions of COQs which are related by a proper mutation are integrally congruent.
\end{theorem}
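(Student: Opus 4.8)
The plan is to produce, for a single proper mutation $\mu_v$ carrying a COQ $(Q,\sigma)$ to $(Q',\sigma')$, an explicit matrix $G\in\GL_n(\ZZ)$ with $U'=G\,U\,G^{T}$, where $U$ and $U'$ are unipotent companions of the two COQs computed in carefully chosen linear orders. By Propositions~\ref{prop:cyclic shift of U} and~\ref{prop:wiggle preserves U}, the integral congruence class of a unipotent companion depends neither on which cut of a cyclic order we linearize by, nor on which representative of a wiggle class we pick, so it suffices to exhibit such a $G$ for one convenient set of choices.

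First I would cut $\sigma$ so that $v$ becomes the largest vertex, and let $U$ be the resulting unipotent companion. In this linear order, properness of $v$ says precisely that every vertex of $\Out(v)$ precedes every vertex of $\In(v)$; so I can fix a threshold $m$ for which positions $1,\dots,m$ contain all of $\Out(v)$ (and possibly some vertices not adjacent to $v$), while positions $m+1,\dots,n-1$ contain all of $\In(v)$ (and possibly some vertices not adjacent to $v$). For $\mu_v(Q)$ I would take the representative of $\sigma'$ in which $v$ has been moved down to sit immediately above position $m$; all such placements are wiggle equivalent and realize the prescribed repositioning of $v$. Keeping the vertices in positions $1,\dots,m$ fixed, re-inserting $v$ as the new position $m+1$, and shifting the vertices formerly in positions $m+1,\dots,n-1$ up by one produces a linear order for $\sigma'$ that differs from the old one only by the single cyclic permutation $P$ of $\{m+1,\dots,n\}$ sending the old position $n$ of $v$ down to $m+1$.

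I would then take $G=P\,M$ with $M=M_Q(v,-1)$ the mutation matrix of Definition~\ref{def:mutation mat}; the sign $\varepsilon=-1$ is chosen so that the only nontrivial column of $M$, namely column $v$, is supported on $\In(v)\cup\{v\}$. Since $B_{Q'}=M B_Q M^{T}$ in the old order by~\eqref{eq:Bcong} and $P$ only re-indexes into the new order, $G B_Q G^{T}$ is exactly the exchange matrix of $Q'$ written in the chosen linear order for $\sigma'$. Hence $G\,U\,G^{T}$ and $U'$ have the same skew-symmetric part, and $G\,U\,G^{T}=U'$ will follow once $G\,U\,G^{T}$ is shown to be upper triangular with $1$'s on the diagonal; then $\det G=\pm1$ gives the integral congruence, and Propositions~\ref{prop:cyclic shift of U} and~\ref{prop:wiggle preserves U} remove the auxiliary choices.

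This triangularity check is the heart of the argument, and the step I expect to be the main obstacle. Writing $M=I+g\,e_n^{T}$, where $e_n$ is the last coordinate vector and $g$ is supported on $\In(v)\cup\{v\}$ (with $g_n=-2$), one expands $M U M^{T}=U+u\,g^{T}+g\,e_n^{T}+g\,g^{T}$, with $u$ the last column of $U$, and conjugates by $P$. Properness is used twice: $g_b=0$ for every position $b\le m$ (those positions avoid $\In(v)$), and, crucially, $u_a+g_a=0$ for every position $a$ in $m+1,\dots,n-1$ (such an $a$ either lies in $\In(v)$, where $g_a=-U_{an}=-u_a$, or is not adjacent to $v$, where $g_a=u_a=0$). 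These two cancellations are exactly what kill the entries below the diagonal of $P M U M^{T}P^{T}$; on the diagonal, the entry at $v$ equals $(1+g_n)^{2}=(M_{nn})^{2}=1$, and every other diagonal entry is $1$ since there $g_a=0$ or $u_a+g_a=0$. Granting this computation, $G\,U\,G^{T}=U'$, the theorem follows for a single proper mutation, and the general statement follows by transitivity of integral congruence.
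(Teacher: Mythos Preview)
Your argument is correct. This paper does not itself prove the statement---it quotes \cite[Theorem~7.1]{COQ}---but from Lemma~\ref{lem:mutates like B v1} (which is \cite[Lemma~7.2]{COQ}) and its generalizations in Lemmas~\ref{lem:mutates like B v1gen}--\ref{lem:mutates like B v2gen} one sees that the proof in \cite{COQ} follows exactly the same strategy you use: pick a linear cut compatible with properness, conjugate the unipotent companion by $M_Q(v,\varepsilon)$, and verify entry-by-entry that the result is upper unitriangular in a linear order representing $\mu_v(Q,\sigma)$; then invoke Propositions~\ref{prop:cyclic shift of U} and~\ref{prop:wiggle preserves U} to erase the auxiliary choices.

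The one genuine difference is the choice of cut. In \cite{COQ} (see Lemma~\ref{lem:mutates like B v1}) the linear order is taken with $\In(v)$ preceding both $v$ and $\Out(v)$, whereas you place $v$ \emph{last}, so that $\Out(v)$ precedes $\In(v)$ precedes $v$. Your choice makes the bookkeeping especially clean: the nontrivial column of $M_Q(v,-1)$ is the last one, so $M=I+g\,e_n^{T}$ and the expansion $MUM^{T}=U+g\,e_n^{T}+u\,g^{T}+g\,g^{T}$ reduces the triangularity check to the two cancellations $g_b=0$ for $b\le m$ and $u_a+g_a=0$ for $m<a<n$, both of which are immediate from properness. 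The \cite{COQ} choice instead leads to a case analysis over the relative positions of $v_i,v_j,v_k$ (visible in the proof of Lemma~\ref{lem:mutates like B v1gen}). The two cuts are cyclic shifts of one another, so Proposition~\ref{prop:cyclic shift of U} makes them interchangeable; your packaging is arguably tidier, but the content is the same.
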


\begin{definition}[{\cite[Definition 14.1]{COQ}}]
A (wiggle equivalence class of) COQ(s) is \emph{totally proper} if every COQ in its proper mutation class is proper.
We say a cyclic ordering $\sigma$ is \emph{totally proper} for a quiver $Q$ if $(Q,\sigma)$ is totally proper.
If $Q$ has a totally proper cyclic ordering, we say $Q$ is \emph{totally proper}.
\end{definition}



Certain unipotent companions are transformed by matrix conjugation with the \emph{same} matrices (see Definition~\ref{def:mutation mat}) that transform the $B$-matrix.
One case was established in \cite{COQ}, and follows.
We generalize this result in the next section.

\begin{lemma}[{\cite[Lemma 7.2 and Remark 7.3]{COQ}}]
\label{lem:mutates like B v1}
Suppose that $v_j$ is a proper vertex in the COQ $Q$.
Let $U$ be a unipotent companion of $Q$ with linear order $<$ such that $v_i < v_j$ for $v_i \in \In(v_j)$ and $v_i < v_k$ for $v_k \in \Out(v_j)$.
Then the matrix $M_Q(v_j, -1) U M_Q(v_j, -1)^T$ is a unipotent companion of $\mu_{v_j}(Q)$.
\end{lemma}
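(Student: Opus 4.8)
The plan is to show that $U' := M\,U\,M^{T}$, with $M := M_Q(v_j,-1)$, is exactly the unipotent companion of $\mu_{v_j}(Q)$ with respect to a suitable tearing $\prec$ of its cyclic ordering; by Definition~\ref{def:unipotent companion} this makes $U'$ a unipotent companion of the COQ $\mu_{v_j}(Q)$. First I would observe that the skew-symmetrization is automatic: since $U^{T}-U=B_Q$ in the ordering $<$, the matrix-mutation identity~\eqref{eq:Bcong} with $\varepsilon=-1$ gives
\[ (U')^{T}-U' \;=\; M(U^{T}-U)M^{T} \;=\; M B_Q M^{T} \;=\; B_{\mu_{v_j}(Q)}, \]
and $U'\in\GL_n(\ZZ)$ because $M,U\in\GL_n(\ZZ)$. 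So it remains only to produce a linear order $\prec$, associated to the cyclic ordering of the COQ $\mu_{v_j}(Q)$, making $U'$ unipotent upper-triangular.

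For $\prec$ I would take the order obtained from $<$ by moving $v_j$ down to be the minimum and leaving the other vertices in their $<$-order. Closing up $\prec$ amounts to sliding $v_j$ clockwise in $\sigma$ past exactly the vertices that were above $v_j$ in $<$. Since the hypothesis puts the in-neighbours of $v_j$ below $v_j$ and its out-neighbours above $v_j$, those vertices comprise all of $\Out_Q(v_j)=\In_{\mu_{v_j}(Q)}(v_j)$ together with some vertices non-adjacent to $v_j$, and no vertex of $\In_Q(v_j)=\Out_{\mu_{v_j}(Q)}(v_j)$. Hence this slide is a legitimate repositioning of the kind used to define the COQ mutation $\mu_{v_j}(Q)$---it may move $v_j$ farther than strictly necessary, but all such choices give wiggle-equivalent COQs, and $v_j$ is proper in the result---so $\prec$ is a tearing of $\mu_{v_j}(Q)$.

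The remaining task, checking that $U'$ is unipotent upper-triangular with respect to $\prec$, is the computational heart. I would use the elementary identities $M e_a = e_a$ for $a \neq v_j$ and $M^{T} e_{v_j} = -e_{v_j}$, together with $U = I - B_Q^{+}$ where $B_Q^{+}$ is the strictly-upper-triangular part of $B_Q$ in the order $<$. These give immediately that the $v_j$-th column of $U'$ is $e_{v_j}$---so $U'_{a v_j}=0$ for $a\neq v_j$ and $U'_{v_j v_j}=1$---and, writing $\alpha_c := \max(0,\,b_{c v_j;Q})$, that
\[ U'_{ab} \;=\; U_{ab} + \alpha_a U_{v_j b} + \alpha_b U_{a v_j} + \alpha_a\alpha_b \qquad (a,b\neq v_j). \]
Because $v_j$ is $\prec$-minimal, the entries that must vanish for $\prec$-upper-triangularity are $U'_{a v_j}$ with $a\neq v_j$ (already handled) and $U'_{ab}$ with $a,b\neq v_j$ and $a>b$ in $<$. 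For the latter: $U_{ab}=0$ ($U$ is $<$-upper-triangular); $\alpha_a U_{v_j b}=0$ because $\alpha_a\neq 0$ forces $a\in\In_Q(v_j)$, hence $a<v_j$, whereas $U_{v_j b}\neq 0$ forces $v_j<b$, contradicting $a>b$; and $\alpha_b U_{a v_j}+\alpha_a\alpha_b=0$ by a short case-check---when $U_{a v_j}\neq 0$ the hypothesis ($\Out_Q(v_j)$ above $v_j$) forces $a\in\In_Q(v_j)$ and $U_{a v_j}=-b_{a v_j;Q}=-\alpha_a$. The same bookkeeping gives $U'_{aa}=1$ for $a\neq v_j$. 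Hence $U'$ is the unipotent companion of $(\mu_{v_j}(Q),\prec)$, completing the proof.

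I expect the only real work to be this last, routine-but-fiddly case analysis---especially the cancellation $\alpha_b U_{a v_j}+\alpha_a\alpha_b=0$---and, relatedly, the care needed to verify that the rearranged order $\prec$ genuinely arises by tearing the cyclic ordering of $\mu_{v_j}(Q)$. The latter is exactly where properness of $v_j$, encoded in the hypothesis on $<$, gets used.
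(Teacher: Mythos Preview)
Your proof is correct and follows essentially the same route as the paper. The paper does not prove this lemma directly (it is cited from \cite{COQ}), but its proof of the generalization, Lemma~\ref{lem:mutates like B v1gen}, proceeds by the same entry-by-entry verification that $M U M^T$ is the unipotent companion for a suitably rearranged linear order on $\mu_{v_j}(Q)$. Your organizational choice---first invoking \eqref{eq:Bcong} to obtain $(U')^T-U'=B_{\mu_{v_j}(Q)}$ and then only checking unipotent upper-triangularity in $\prec$---is a mild streamlining of the direct computation, but the underlying case analysis (and the use of the hypothesis on $<$ to place $\In(v_j)$ below and $\Out(v_j)$ above $v_j$) is the same.
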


We conclude this section by recalling two easily expressed invariants of COQs, along with some examples.
Definition~\ref{def:alexander and markov} and Proposition~\ref{prop:markov positive} do not play any role in the proof of Theorem~\ref{thm:acyclic totally proper}, but do appear prominently in applications and corollaries. 

\begin{definition}[{\cite[Definition 8.1]{COQ}}]
\label{def:alexander and markov}
Given a COQ $Q$ with unipotent companion $U$, the \emph{Alexander polynomial} of $Q$ is given by~$\Delta_Q(t) = \det(tU - U^T)$.
The \emph{Markov invariant} $M_Q$ is defined by~$M_Q = n + (\text{coefficient of } t^{n-1} \text{ in } \Delta_Q(t))$.
\end{definition}

\begin{remark}
As the exchange matrix $B$ and unipotent companion $U$ of a linearly ordered quiver $Q$ are related by $B = U^T - U$, it follows that $$\det(B) = \det(U^T - U) = (-1)^n \Delta_Q(1).$$
\end{remark}

\begin{proposition}
\label{prop:markov positive}
Let $Q$ be an $n$-vertex acyclic quiver.
Fix a linear order $<$ of the vertices of $Q$ so that $ v_i < v_j$ whenever $v_i \rightarrow v_j$.
Let $U$ be the unipotent companion of the linearly ordered quiver $(Q, <)$.
For a cycle $\mathcal O = (w_0 - \cdots - w_\ell = w_0)$ in $K_Q$, let $wt(\mathcal O) = \prod_i |b_{w_i w_{i+1}}|$ if there is exactly one location $i$ with $w_{i} \leftarrow w_{i+1}$ (thus $w_j \rightarrow w_{j+1}$ for all $j\neq i$), and $wt(\mathcal O) = 0$ otherwise.
Then the Markov invariant 
$$M_Q = \sum_{v_i < v_j} b_{v_i v_j}^2 + \sum_{\mathcal O} wt(\mathcal O).$$
\end{proposition}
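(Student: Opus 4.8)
The plan is to compute $\Delta_Q(t)=\det(tU-U^{T})$ by a direct determinant expansion and extract the coefficient of $t^{n-1}$. Since $<$ is compatible with the orientations of $Q$, the unipotent companion has $U_{ij}=-b_{v_iv_j}=-|b_{v_iv_j}|$ for $i<j$ (throughout, put $b_e=0$ when there is no edge $e$), so a one-line check gives the entries of $M:=tU-U^{T}$: namely $M_{ii}=t-1$, $M_{ij}=-t\,|b_{v_iv_j}|$ for $i<j$, and $M_{ij}=|b_{v_iv_j}|$ for $i>j$. Writing $M=(t-1)I+R$ with $R$ the off-diagonal part and expanding the determinant by splitting column $j$ of $M$ as $(t-1)e_j+R_{\cdot j}$ (equivalently, grouping the terms of the Leibniz formula by the set of indices that a permutation $\pi$ does not fix) one obtains
\[ \det M \;=\; \sum_{S\subseteq\{1,\dots,n\}}(t-1)^{\,n-|S|}\det(R[S,S]), \]
the sum over all subsets $S$, with $\det$ of the empty matrix equal to $1$.

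Next I would carry out a degree count. In the Leibniz expansion of $\det(R[S,S])$ only derangements $\pi$ of $S$ survive (the diagonal of $R$ is zero), and the term attached to $\pi$ has $t$-degree equal to its number of ascents $\#\{i\in S:i<\pi(i)\}$ (or is identically $0$); since each cycle of $\pi$ has at least one descent, this degree is at most $|S|-1$. Thus every $S$-summand has $t$-degree at most $n-1$ except $S=\varnothing$, which contributes $(t-1)^n$; so $\Delta_Q$ is monic of degree $n$, and the coefficient of $t^{n-1}$ can only come from pairing the leading term $t^{\,n-|S|}$ of $(t-1)^{\,n-|S|}$ with the leading term $t^{\,|S|-1}$ of $\det(R[S,S])$. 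Hence $[t^{n-1}]\det M=-n+\sum_{\varnothing\neq S}[t^{\,|S|-1}]\det(R[S,S])$, and adding $n$ gives $M_Q=\sum_{\varnothing\neq S}[t^{\,|S|-1}]\det(R[S,S])$.

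The key point is then a small combinatorial lemma: a derangement of $S=\{s_1<\dots<s_k\}$ contributing to the top degree $t^{k-1}$ of $\det(R[S,S])$ must have exactly one descent, hence be a single $k$-cycle with exactly one descent; following such a cycle from its smallest element, every step increases except one, and that one step must land back on the smallest element (otherwise, the only decrease having been used, the cycle could never return to its minimum), which forces the cycle to be the increasing cycle $s_1\mapsto s_2\mapsto\dots\mapsto s_k\mapsto s_1$. For this $\pi$ one has $\sgn(\pi)=(-1)^{k-1}$ and $\prod_{i\in S}R_{i\pi(i)}=(-t)^{k-1}\,|b_{s_1s_2}|\cdots|b_{s_{k-1}s_k}|\,|b_{s_ks_1}|$, so
\[ [t^{\,|S|-1}]\det(R[S,S]) \;=\; |b_{s_1s_2}|\cdots|b_{s_{k-1}s_k}|\,|b_{s_ks_1}|, \]
which vanishes unless all of these edges are present in $K_Q$.

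Finally I would match these contributions to the right-hand side. For $|S|=1$ the contribution is $0$; for $S=\{i,j\}$ with $i<j$ it equals $b_{v_iv_j}^{2}$, and summing over all such pairs gives $\sum_{v_i<v_j}b_{v_iv_j}^{2}$. For $|S|=k\ge 3$ with all of the cycle edges present (cycles of the simple graph $K_Q$ automatically have length $\ge 3$), $S$ determines the cycle $\mathcal O_S=(v_{s_1}-\dots-v_{s_k}-v_{s_1})$ of $K_Q$, traversed so that exactly the edge $v_{s_k}-v_{s_1}$ runs backward, and $wt(\mathcal O_S)$ equals precisely the product above; conversely, since the arrows of $Q$ respect $<$, any cycle of $K_Q$ with exactly one backward edge is the increasing cycle on its vertex set (its vertices in cyclic order have a single descent), while any cycle with a different number of backward edges has $wt=0$. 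Hence $\sum_{|S|\ge 3}[t^{\,|S|-1}]\det(R[S,S])=\sum_{\mathcal O}wt(\mathcal O)$, and the proposition follows. I expect the only genuinely non-mechanical ingredients to be the degree bookkeeping and the combinatorial lemma on single cycles with one descent; the remainder is routine linear algebra.
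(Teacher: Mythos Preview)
Your proof is correct and takes a genuinely different route from the paper's. The paper observes that $\det(tU-U^T)=\det(tI-U^{-1}U^T)$ (since $\det U=1$), so the $t^{n-1}$ coefficient is $-\tr(U^TU^{-1})$; it then writes $U^{-1}=\sum_{k\ge 0}N^k$ with $N=I-U$ strictly upper triangular, and reads off $\tr(U^TI)=n$, $\tr(U^TN)=-\sum_{v_i<v_j}b_{v_iv_j}^2$, and $\tr(U^TN^k)$ as minus a sum over cycles of the appropriate length. Your argument instead expands $\det((t-1)I+R)$ over subsets, bounds degrees cycle-by-cycle, and pins down the unique derangement contributing to the top degree via the ``one descent forces the increasing cycle'' lemma. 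The trace--geometric-series approach is shorter and ties in naturally with the cosquare $U^{-T}U$ already used elsewhere in the paper; your expansion is more elementary (it avoids the characteristic-polynomial identity and the matrix inverse) and has the side benefit that the same subset expansion organizes \emph{all} coefficients of $\Delta_Q(t)$, not just the Markov one, which could be useful for the extensions hinted at in Remark~\ref{rem:Schwartz}. Both arguments ultimately hinge on the same combinatorial fact---that the only walks that survive are strictly increasing chains closed up by a single backward edge---reached from opposite ends.
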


\begin{proof}
The $t^{n-1}$ coefficient of $\det(tU - U^T)$ is negative of the trace of $U^{T} U^{-1}$. 
Let $N = I-U$, note $N$ is positive and strictly upper triangular. 
Thus $$U^{-1} = I + N + N^2 + \cdots + N^{n-1},$$ and so $\tr(U^{T}\! U^{-1}) = \sum_{k=0}^n \tr(U^T\! N^k)$.
We compute $\tr(U^T\! I)\! = n$, $\tr(U^T\! N)\! =-\sum_{v_i < v_j}\! b_{v_i v_j}^2$ and $\tr(U^T\! N^k) = -\sum_{\mathcal O} wt(\mathcal O),$ where the sum is over the cycles $\mathcal O$ of length $k$. 
(Recall that cycles are considered up to cyclic shifts.)
\end{proof}

\begin{example}
\label{eg:Markov}
Let $Q$ be the leftmost COQ shown in Figure~\ref{fig:cyclic orderings}.
Then $$\Delta_Q(t)= t^{4} + 21 t^{3} - 43 t^{2} + 21 t + 1,$$ and $M_Q = 1^2 + 1^2 + 1^2 + 2^2 + 3^2 + 1 \cdot 2 \cdot 3 + 1 \cdot 1 \cdot 3 = 4 + 21 = 25.$
\end{example}

\hide{\begin{remark}
\label{rem:Schwartz}
Amanda Schwartz has recently developed a combinatorial description for \emph{all} of the coefficients of the Alexander polynomial $\Delta_Q(t)$ in the case that $Q$ is a tree quiver (that is, $K_Q$ is a tree) \cite[Theorem 4.2]{SchwartzTree}.
\end{remark}
}
Further examples appear in Section~\ref{sec:cor}.

\newpage

\section{Technical Lemmas}
\label{sec:tech}

In this section we establish lemmas and propositions which we will need in the proof of Theorem~\ref{thm:acyclic totally proper}.
We begin with generalizations of Lemma~\ref{lem:mutates like B v1}.

\begin{lemma}
\label{lem:mutates like B v1gen}
Suppose that $v_j$ is a proper vertex in the COQ $Q$.
Let $U=(u_{v_i v_k})$ be a unipotent companion of $Q$ with linear order $<$ such that $v_i < v_j$ for $v_i \in \In(v_j)$.
Then the matrix $M_Q(v_j, -1) U M_Q(v_j, -1)^T$ is a unipotent companion of $\mu_{v_j}(Q)$.
\end{lemma}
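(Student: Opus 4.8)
The plan is to compare this with the known case (Lemma 2.something, \ref{lem:mutates like B v1}), where the linear order is required to satisfy *both* $v_i < v_j$ for $v_i \in \In(v_j)$ *and* $v_i < v_k$ for $v_k \in \Out(v_j)$. Here we drop the second condition, so the new content is: handle the situation where some vertex $v_i \in \In(v_j)$ lies *above* some vertex $v_k \in \Out(v_j)$ in the linear order (but still below $v_j$). The idea is to reduce to the old lemma by a sequence of wiggles/cyclic shifts that does not change the integral congruence class of the unipotent companion, then invoke \ref{lem:mutates like B v1} and Proposition~\ref{prop:proper well def} (well-definedness of proper mutation up to wiggles).

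Concretely, first I would record what $M := M_Q(v_j, -1)$ does: it is $J + E$ where $J$ flips the sign of row/column $j$ and $E$ has $j$-th column $e_{qj} = \max(0, b_{qj})$, i.e.\ $e_{qj}$ counts arrows $q \to v_j$, supported on $\In(v_j)$. The matrix $M U M^T$ is automatically congruent to $U$ over $\GL_n(\ZZ)$ and satisfies $(MUM^T)^T - MUM^T = M(U^T - U)M^T = M B_Q M^T = B_{\mu_{v_j}(Q)}$ by \eqref{eq:Bcong}, so it is *a* skew-symmetrizing matrix for $\mu_{v_j}(Q)$ with the same linear order; the only thing to check is that it is *unipotent upper-triangular* in that order. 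So the whole lemma reduces to the claim: with the hypothesis $v_i < v_j$ for all $v_i \in \In(v_j)$ (and no constraint on $\Out(v_j)$), the matrix $MUM^T$ is upper-triangular with $1$'s on the diagonal.

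For that computation I would write $MUM^T$ blockwise with respect to the splitting $\{v : v < v_j\} \sqcup \{v_j\} \sqcup \{v : v > v_j\}$ and track entries. The diagonal is easy: conjugating by $J$ preserves the diagonal, and $E$ only contributes off-diagonal terms in column/row $j$; since $u_{v_j v_j}=1$, the $(v_j,v_j)$ entry stays $1$. For the off-diagonal entries the key observations are: (i) the $E$-part only mixes in multiples of column $j$ and row $j$ of $U$, and column $j$ of $U$ (above the diagonal) is supported on indices $< v_j$, which after the $\max(0,-b_{\cdot j})$ shows up exactly on $\In(v_j)$; (ii) the sign flips from $J$ on row/column $j$ combine with the reversal of all arrows at $v_j$ under mutation. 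The potentially dangerous entries are $(v_k, v_i)$ with $v_k \in \Out(v_j)$, $v_i \in \In(v_j)$, and $v_k < v_i < v_j$ — exactly the pairs excluded by the old hypothesis. I expect that in $MUM^T$ the $(v_k,v_i)$ entry is $-b_{v_j v_i}\,u_{v_j v_k}^{\text{something}} + \dots$; the cancellation that makes it vanish should come from the properness of $v_j$, which forbids any arrow $v_i \to v_j \to v_k$ from "wrapping around," combined with the fact that below the diagonal $U$ is zero. The main obstacle is pinning down this cancellation cleanly: properness of $v_j$ (every path $u \to v_j \to w$ turns right, i.e.\ $u < v_j < w$) is what guarantees $\Out(v_j)$ sits above $v_j$ *in a consistent cyclic position*, but since we have torn the cyclic order possibly in an awkward place, I need to argue that the entries $u_{v_i v_k}$ for $v_i \in \In(v_j)$, $v_k \in \Out(v_j)$ are controlled — and here I would either re-tear the cyclic order between $v_j$ and its successor (a cyclic shift, harmless for the congruence class by Proposition~\ref{prop:cyclic shift of U}) to land in the hypotheses of \ref{lem:mutates like B v1}, or do the direct block bookkeeping. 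I lean toward the reduction: after the cyclic shift putting $v_j$ last, properness gives $\In(v_j)$ arbitrary and $\Out(v_j)$ empty-of-constraint automatically, apply \ref{lem:mutates like B v1}, then shift back and use that proper mutation is well-defined up to wiggles (Prop.~\ref{prop:proper well def}) together with Prop.~\ref{prop:cyclic shift of U}/\ref{prop:wiggle preserves U} to transport the conclusion back to the original tearing.
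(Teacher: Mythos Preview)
Your proposal has two genuine gaps, one in each approach.

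First, your target is mis-stated: $MUM^T$ is \emph{not} upper triangular in the original order~$<$. For instance, for $v_i \in \In(v_j)$ (so $v_i < v_j$) the $(v_j, v_i)$ entry of $MUM^T$ computes to $-b_{v_i v_j} \neq 0$. What the paper actually does is construct a \emph{new} linear order $<'$ for $\mu_{v_j}(Q)$ --- obtained from $<$ by moving $v_j$ just below all of $\In(v_j)$ and leaving everything else fixed --- and then check entry-by-entry that $MUM^T$, reindexed by the permutation $\pi$ taking $<$ to $<'$, equals the unipotent companion of $(\mu_{v_j}(Q), <')$. This is the ``direct block bookkeeping'' you gesture at, but aimed at the correct order; properness enters precisely to guarantee that $<'$ exists: if some $v_k \in \Out(v_j)$ satisfies $v_k < v_j$, then the right-turn condition forces $v_k < v_i$ for every $v_i \in \In(v_j)$, so sliding $v_j$ down past $\In(v_j)$ does not cross any $v_k \in \Out(v_j)$.

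Second, your cyclic-shift reduction does not close. Shifting $<$ so that $v_j$ is last does \emph{not} land in the hypotheses of the old lemma: the vertices of $\Out(v_j)$ that were above $v_j$ in $<$ become the smallest elements after the shift, hence sit below all of $\In(v_j)$, violating the required $\In(v_j) < \Out(v_j)$. More fundamentally, even if some shift did work, you would only conclude that $M U'' M^T$ is a unipotent companion, where $U''$ is the shifted companion; the lemma makes a claim about the specific matrix $MUM^T$, and ``congruent to a unipotent companion'' is not the same as ``is a unipotent companion.'' The congruences realizing cyclic shifts do not commute with $M$ in general, so there is no free transport back to the original tearing.
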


\begin{proof}
We follow the proof of \cite[Lemma 7.2]{COQ} closely. 
Throughout $\In(v_j)$ and~$\Out(v_j)$ respectively refer to the inset and outset of $v_j$ in $Q$.

For the mutated quiver $Q' = \mu_{v_j}(Q)$ let $<'$ be a linear order such that 
\begin{itemize}
\item $v_j <' v_i$ for all $v_i \in \In(v_j)$, 
\item if both $v_k \in \Out(v_j)$ and $v_k < v_j$ then $v_k <' v_j$, and 
\item otherwise $<'$ agrees with~$<$ (so we have only changed the relative position of $v_j$).
\end{itemize}
Such an order exists because $v_j$ is proper in $Q$.
Let $U'=(u'_{v_i v_k})$ be the unipotent companion of $Q'$ with linear ordering $<'$.

Let $B_Q = (b_{v_iv_k})$, set $n_{v_iv_j} = -\max(0, b_{v_iv_j})$ and also 
$$\delta_{v_i} = \begin{cases} -1 & \text{if } v_i=v_j; \\ 1 & \text{else.} \end{cases}$$

We wish to show that:
\begin{equation} 
\label{eq:mutate M -1}
u'_{v_iv_k} = \delta_{v_i} \delta_{v_k} u_{v_i v_k} - n_{v_i v_j} u_{v_j v_k} \delta_{v_k} - \delta_{v_i} u_{v_i v_j} n_{v_k v_j} + n_{v_i v_j} n_{v_k v_j}.
\end{equation}
Let $u''_{v_i v_k}$ denote the right hand side of \eqref{eq:mutate M -1}.

By construction, we have 
\begin{equation*}
u'_{v_i v_k} = \begin{cases} 
1 & \text{if } v_i=v_k; \\
 b_{v_j v_k} & \text{if } v_i=v_j <' v_k; \\ 
 b_{v_i v_j} & \text{if } v_i <' v_j=v_k; \\
 -b_{v_i v_k} - n_{v_i v_j} n_{v_k v_j} + \max(0, -b_{v_i v_j}) \max(0, -b_{v_j v_k}) & 
 \text{if }  v_j \neq v_i <' v_k \neq v_j; \\ 
 0 & \text{if } v_i >' v_k.
\end{cases}
\end{equation*}

Every case where both $v_j \leq' v_i,$ and $v_j \leq' v_k$ appears in \cite[Lemma 7.2]{COQ}, and the arguments therein apply.
We consider the cases where $v_j$ appears after $v_i$ or $v_k$ in~$<'$.
Note by construction of $<'$ that if $w <' v_j$ then $u_{v_jw} = 0 = n_{wv_j}$ and~$b_{wv_j} = -u_{wv_j}.$

\begin{itemize}
\item If $v_i = v_k <' v_j$ then $u'_{v_i v_i} = 1 - 0 - 0 + 0 = u''_{v_i v_i}.$
\item If $v_i <' v_j=v_k$ then, 
 $u'_{v_i v_j} = b_{v_i v_j} = -u_{v_i v_j} - 0 - 0 + 0 = u''_{v_i v_j}$.

\item We split $v_j \neq v_i <' v_k \neq v_j$ into $2$ cases, depending on the relative position of $v_j$.
\begin{itemize}
\item If $v_i <' v_j <' v_k$ then 
 $n_{v_k v_j} = -\max(0, b_{v_k v_j})$ and $u_{v_i v_j}= \max(0, -b_{v_i v_j})$.
So
$$u'_{v_i v_k} =  -b_{v_i v_k} + \max(0, -b_{v_i v_j}) \max(0, -b_{v_j v_k}) = u_{v_i v_k} - 0 - u_{v_i v_j} n_{v_k v_j} + 0 = u''_{v_i v_k}.$$
\item
If $ v_i <' v_k <' v_j$,
then
$u'_{v_i v_k} = -b_{v_i v_k} = u_{v_i v_k} - 0 - 0 + 0 = u''_{v_i v_k}.$
\end{itemize}
\item If $v_k <' v_i$ and $v_k <' v_j$ then $u'_{v_i v_k} = 0 - 0 - 0 + 0 = u''_{v_i v_k}.$ 
\end{itemize}

Let $\pi$ be the permutation matrix which transforms $<$ into $<'$. 
Equation~\eqref{eq:mutate M -1} can be rewritten as $U' = \pi^T M_Q(v_j, -1) U M_Q(v_j, -1)^T \pi$ (recall the matrices $E$ and $J$ defined in Definition~\ref{def:mutation mat}).
The claim follows.
\end{proof}


We establish a corresponding version of Lemma~\ref{lem:mutates like B v1gen} for $\varepsilon = 1$.

\begin{lemma}
\label{lem:mutates like B v2gen}
Suppose that $v_j$ is a proper vertex in the COQ $Q$.
Let $U$ be a unipotent companion of $Q$ with linear order $<$ such that $v_j < v_k$ for $v_k \in \Out(v_j)$.
Then the matrix $M_Q(v_j, 1) U M_Q(v_j, 1)^T$ is a unipotent companion of $\mu_{v_j}(Q)$.
\end{lemma}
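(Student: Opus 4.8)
I would prove Lemma~\ref{lem:mutates like B v2gen} by reducing it to Lemma~\ref{lem:mutates like B v1gen}, exploiting the fact that the two mutation matrices $M_Q(v_j, 1)$ and $M_Q(v_j, -1)$ differ in a controlled way. The starting observation is the identity
\[
M_Q(v_j, 1) = M_Q(v_j, -1)\, T,
\]
where $T$ is the elementary matrix that realizes the "transvection" adding multiples of the $v_j$-row to the rows indexed by $\In(v_j)$ (equivalently, $T = I + E_{+} + E_{-}$ bookkeeping aside, one has $M_Q(v_j,1) - M_Q(v_j,-1) = E(v_j,1) - E(v_j,-1)$, and the $v_j$-column of $E(v_j,\varepsilon)$ records $\max(0,-\varepsilon b_{qv_j})$, so the difference is supported on the $v_j$-column and is exactly $b_{\cdot\, v_j}$ on the entries where $v_j$ has an in-neighbour versus an out-neighbour). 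Thus up to a unimodular change of basis that only shuffles how the $v_j$-row is added to its in-neighbours, the two conjugations agree.

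First I would set up the linear order. We are given $U$ with a linear order $<$ in which $v_j < v_k$ for every $v_k \in \Out(v_j)$. I would build a second order $<^\sharp$ by moving $v_j$ leftward past all of $\In(v_j)$ — this is permitted because $v_j$ is proper, so no vertex of $\Out(v_j)$ lies below a vertex of $\In(v_j)$ that we are about to jump over; more precisely, properness of $v_j$ guarantees that the set $\In(v_j)$ and the set $\{v_k \in \Out(v_j)\}$ do not interleave around $v_j$ in a way that blocks the shift, so after the shift we still have every $v_k \in \Out(v_j)$ satisfying $v_j <^\sharp v_k$, and now additionally $v_i <^\sharp v_j$ for all $v_i \in \In(v_j)$. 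Let $\pi$ be the permutation matrix carrying $<$ to $<^\sharp$, and let $U^\sharp = \pi U \pi^T$ be the unipotent companion of $Q$ in the order $<^\sharp$. This order now satisfies the hypothesis of Lemma~\ref{lem:mutates like B v1gen} (namely $v_i <^\sharp v_j$ for $v_i \in \In(v_j)$), so $M_Q(v_j, -1) U^\sharp M_Q(v_j,-1)^T$ is a unipotent companion of $\mu_{v_j}(Q)$.

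The remaining step is to relate this to $M_Q(v_j, 1) U M_Q(v_j, 1)^T$. The key point is that conjugating $U$ (in the original order $<$) by $M_Q(v_j, 1)$ and conjugating $U^\sharp$ (in the shifted order) by $M_Q(v_j,-1)$ differ only by (i) the permutation $\pi$ repositioning $v_j$, and (ii) unimodular upper/lower-triangular elementary operations in the block spanned by $v_j$ and $\In(v_j)$. Since conjugation by a permutation matrix corresponds exactly to re-reading the same $B$-matrix in a different linear order — and the $B$-matrix of $\mu_{v_j}(Q)$ is the same regardless — the resulting matrix $M_Q(v_j,1) U M_Q(v_j,1)^T$ will again be unipotent upper-triangular with respect to $<$ (one checks the diagonal is $1$ and the lower part vanishes, using that in the original order $<$ every out-neighbour of $v_j$ is above $v_j$, which is exactly the condition that makes the $E$-matrix for $\varepsilon = 1$ add only "upward" into $U$ in a way compatible with upper-triangularity) and it satisfies $(\,\cdot\,)^T - (\,\cdot\,) = M_Q(v_j,1) B_Q M_Q(v_j,1)^T = B_{\mu_{v_j}(Q)}$ by \eqref{eq:Bcong}. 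Uniqueness of the unipotent companion for a given linearly ordered quiver then forces $M_Q(v_j,1) U M_Q(v_j,1)^T$ to be \emph{the} unipotent companion of $\mu_{v_j}(Q)$ in the order $<$, proving the lemma.

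**Main obstacle.** The delicate part is verifying that $M_Q(v_j, 1) U M_Q(v_j, 1)^T$ is genuinely upper-triangular with $1$'s on the diagonal when the order $<$ only controls $\Out(v_j)$ (and says nothing about where $\In(v_j)$ sits relative to $v_j$) — this is the mirror image of the situation in Lemma~\ref{lem:mutates like B v1gen}, and I expect the cleanest route is to redo the entry-by-entry computation as in the proof of Lemma~\ref{lem:mutates like B v1gen} rather than to push the reduction through abstractly. Concretely, one writes out $u''_{v_i v_k}$ as the $(v_i,v_k)$-entry of $M_Q(v_j,1) U M_Q(v_j,1)^T$, splits into the cases according to the position of $v_j$ relative to $v_i$ and $v_k$ in $<$, and uses that for $v_k \in \Out(v_j)$ one has $v_j < v_k$ together with the property (analogous to the one highlighted in the earlier proof) that $w >' v_j$ forces the relevant $n$- and $u$-entries to degenerate. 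This is a routine but somewhat lengthy bookkeeping exercise entirely parallel to the $\varepsilon = -1$ case, which is why I would present it as "by the same argument as Lemma~\ref{lem:mutates like B v1gen}, with the roles of $\In(v_j)$ and $\Out(v_j)$ and of upper/lower exchanged."
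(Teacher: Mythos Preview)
Your primary reduction has a genuine gap. You write ``Let $U^\sharp = \pi U \pi^T$ be the unipotent companion of $Q$ in the order $<^\sharp$,'' where $<^\sharp$ is obtained from $<$ by moving $v_j$ past the vertices of $\In(v_j)$. But these vertices are \emph{adjacent} to $v_j$ in $Q$, so this move is not a wiggle: if $v_i\in\In(v_j)$ with $v_j<v_i$, then $u_{v_jv_i}=-b_{v_jv_i}\neq 0$, and after conjugating by $\pi$ this nonzero entry lands strictly below the diagonal. So $\pi U\pi^T$ is simply not upper triangular in $<^\sharp$, and the subsequent appeal to Lemma~\ref{lem:mutates like B v1gen} does not apply to it. The surrounding identity $M_Q(v_j,1)=M_Q(v_j,-1)\,T$ is correct (with $T=I+E(1)-E(-1)$), but you never close the loop between this and the permutation $\pi$; the hand-wave that the two conjugations ``differ only by $\pi$ and unimodular elementary operations'' is exactly the content of the lemma and is not established. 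Your fallback---redoing the entry-by-entry computation of Lemma~\ref{lem:mutates like B v1gen} with $\varepsilon=+1$ and the roles of $\In(v_j)$, $\Out(v_j)$ swapped---would work, but you do not carry it out, and the claim that the result is upper triangular \emph{in the original order $<$} is also not quite right: as in the proof of Lemma~\ref{lem:mutates like B v1gen}, the output is upper triangular only in a new order $<'$ obtained by repositioning $v_j$.

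The paper avoids all of this with a two-line duality argument you did not find. Define the \emph{opposite} linearly ordered quiver $Q^{op}$ by reversing every arrow and reversing the linear order. Then $U^T$ is the unipotent companion of $Q^{op}$, the hypothesis ``$v_j<v_k$ for $v_k\in\Out_Q(v_j)$'' becomes exactly ``$v_i<^{op}v_j$ for $v_i\in\In_{Q^{op}}(v_j)$,'' and one checks $M_Q(v_j,1)=M_{Q^{op}}(v_j,-1)$. Hence
\[
M_Q(v_j,1)\,U\,M_Q(v_j,1)^T=\bigl(M_{Q^{op}}(v_j,-1)\,U^T\,M_{Q^{op}}(v_j,-1)^T\bigr)^T,
\]
and Lemma~\ref{lem:mutates like B v1gen} applied to $Q^{op}$ finishes immediately. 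This buys a clean reduction with no new casework; your direct-computation fallback would give an independent proof of comparable length to that of Lemma~\ref{lem:mutates like B v1gen}, but at the cost of repeating essentially the same bookkeeping.
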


\begin{proof}
We reduce to Lemma~\ref{lem:mutates like B v1gen} by introducing the \emph{opposite} linearly ordered quiver~$R^{op}$ of another linearly ordered~$R$.
The quiver of $R^{op}$ has the same vertices as $R$, but each arrow is reversed ($v \rightarrow w$ becomes $v \leftarrow w$) and the linear ordering is reversed ($v < w$ becomes $v > w$).
It is easy to see that if $U_R$ is a unipotent companion of $R$, then $U_R^T$ is a unipotent companion of $R^{op}$ (note that $U_R^T$ is upper triangular in the reversed linear order).
We also have $M_R(v_j, 1) = M_{R^{{op}}}(v_j, -1)$. 

Now we compute:
\begin{align*}
M_Q(v_j, 1) U M_Q(v_j, 1)^T &= ( M_Q(v_j, 1) U^T\! M_Q(v_j, 1)^T )^T\! \\
&= ( M_{Q^{op}}(v_j, -1) U^T\! M_{Q^{op}}(v_j, -1)^T )^T.
\end{align*} 

Lemma~\ref{lem:mutates like B v1gen} implies that $M_{Q^{op}}(v_j, -1) U^T M_{Q^{op}}(v_j, -1)^T$ is a unipotent companion of $Q^{op}$. 
Thus its transpose is a unipotent companion of $Q$, as desired.
\hide{
Note that, up to re-indexing, the opposite quiver switches the sign of $\varepsilon$.
Let $U$ be the unipotent companion of $Q$. 
Let $U'$ be the unipotent companion of $\mu_k(Q)$ \cS{with what linear order?}.
Thus by Lemma~\ref{lem:mutates like B v1} and Proposition~\ref{prop:op companion} (twice):
$U' = w_0 (G' w_0 U^T w_0 G'^T)^T w_0 = (w_0 G' w_0) U (w_0 G' w_0)^T$
which we recognize as the other mutating matrix.
\cS{there are some easy observations to spell this out. Do it all in this proof, no need for these one-use defs.}
}
\end{proof}

\begin{example}
Let $Q$ be the leftmost COQ depicted in Figure~\ref{fig:cyclic orderings}.
The unipotent companions of $Q$ appear in Example~\ref{eg:unipotents}, let $U$ be the rightmost unipotent companion corresponding to the linear order $v_4 < v_1 < v_2 < v_3$.
Vertex $v_2$ is proper (and both $Q$ and $\mu_{v_2}(Q)$ appear in the center of Figure~\ref{fig:proper mutations}).
We compute the unipotent companion $M_Q(v_2, -1) U M_Q(v_2, -1)^T$ of $\mu_{v_2}(Q)$:
\begin{align*}  
\begin{pmatrix} 
1 & 0 & 0 & 0 \\
0 & 1 & 1 & 0 \\
0 & 0 & -1 & 0 \\
0 & 0 & 0 & 1
\end{pmatrix} 
\begin{pmatrix} 1 & 3 &  2 & 1 \\ 0 & 1 & -1 & -1 \\ 0 & 0 & 1 & 0\\ 0 & 0 & 0 & 1 \end{pmatrix}
\begin{pmatrix}
1 & 0 & 0 & 0 \\
0 & 1 & 1 & 0 \\
0 & 0 & -1 & 0 \\
0 & 0 & 0 & 1
\end{pmatrix}^T
&= \begin{pmatrix}
1 & 5 & -2 & 1 \\
0 & 1 & 0 & -1 \\
0 & -1 & 1 & 0 \\
0 & 0 & 0 & 1
\end{pmatrix}.
\end{align*}
This matrix is upper triangular with the linear order $v_4 < v_2 < v_1  < v_3$ (thus we have swapped the $2$nd and $3$rd rows and columns), in agreement with Lemma~\ref{lem:mutates like B v1gen}. 

If we instead take $U'$ to be the leftmost unipotent companion depicted in Example~\ref{eg:unipotents} (linear order $v_1 < v_2 < v_3 < v_4$), then we may apply Lemma~\ref{lem:mutates like B v2gen}.
We find another unipotent companion $M_Q(v, 1) U' M_Q(v, 1)^T$:
\begin{align*}
\begin{pmatrix}
1 & 0 & 0 & 0 \\
0 & -1 & 0 & 0 \\
0 & 0 & 1 & 0 \\
0 & 2 & 0 & 1
\end{pmatrix}
\begin{pmatrix} 1 & -1 &  -1 & -3 \\ 0 & 1 & 0 & -2 \\ 0 & 0 & 1 & -1\\ 0 & 0 & 0 & 1 \end{pmatrix} 
\begin{pmatrix}
1 & 0 & 0 & 0 \\
0 & -1 & 0 & 0 \\
0 & 0 & 1 & 0 \\
0 & 2 & 0 & 1
\end{pmatrix}^T
&= \begin{pmatrix}
1 & 1 & -1 & -5 \\
0 & 1 & 0 & 0 \\
0 & 0 & 1 & -1 \\
0 & -2 & 0 & 1
\end{pmatrix}.
\end{align*}
This matrix is upper triangular with order $v_1 < v_3 < v_4 < v_2$.
\end{example}

\hide{ 
\begin{example}
Let $Q$ be the leftmost COQ depicted in Figure~\ref{fig:cyclic orderings}.
The unipotent companions of $Q$ appear in Example~\ref{eg:unipotents}, let $U$ be the leftmost unipotent companion (corresponding to the linear order $a < b < c < d$).
Vertex $b$ is proper (and both $Q$ and $\mu_b(Q)$ appear in the center of Figure~\ref{fig:proper mutations}).
We compute the unipotent companion $M_Q(v, -1) U M_Q(v, -1)^T$ of $\mu_b(Q)$:
\begin{align*}  
\begin{pmatrix} 1 & 1 & 0 & 0 \\ 
0 & -1 & 0 & 0\\ 
0 & 0 & 1 & 0 \\ 
0 & 0 & 0 & 1\end{pmatrix}  
\begin{pmatrix} 1 & -1 &  -1 & -3 \\ 0 & 1 & 0 & -2 \\ 0 & 0 & 1 & -1\\ 0 & 0 & 0 & 1 \end{pmatrix} 
\begin{pmatrix} 1 & 1 & 0 & 0 \\ 
0 & -1 & 0 & 0\\ 
0 & 0 & 1 & 0 \\ 
0 & 0 & 0 & 1\end{pmatrix}^T
&= \begin{pmatrix}
1 & 0 & -1 & -5 \\
-1 & 1 & 0 & 2 \\
0 & 0 & 1 & -1 \\
0 & 0 & 0 & 1
\end{pmatrix}.
\end{align*}
We find another unipotent companion from $M_Q(v, 1) U M_Q(v, 1)^T$:
\begin{align*}
\begin{pmatrix}
1 & 0 & 0 & 0 \\
0 & -1 & 0 & 0 \\
0 & 0 & 1 & 0 \\
0 & 2 & 0 & 1
\end{pmatrix}
\begin{pmatrix} 1 & -1 &  -1 & -3 \\ 0 & 1 & 0 & -2 \\ 0 & 0 & 1 & -1\\ 0 & 0 & 0 & 1 \end{pmatrix} 
\begin{pmatrix}
1 & 0 & 0 & 0 \\
0 & -1 & 0 & 0 \\
0 & 0 & 1 & 0 \\
0 & 2 & 0 & 1
\end{pmatrix}^T
&= \begin{pmatrix}
1 & 1 & -1 & -5 \\
0 & 1 & 0 & 0 \\
0 & 0 & 1 & -1 \\
0 & -2 & 0 & 1
\end{pmatrix}.
\end{align*}
Note the first is upper triangular with order $b < a < c < d$, while the second is upper triangular with order $a < c < d < b$.
Both are unipotent companions for the wiggle equivalence class $\mu_b(Q)$ (in fact, there is only one COQ in this class, and the two linear orders are cyclic shifts of each other).
This agrees with Lemmas~\ref{lem:mutates like B v1} and \ref{lem:mutates like B v2gen}.
\end{example}
} 

\pagebreak[3]

We next establish that certain subquivers in a mutation-acyclic quiver are acyclic.

\begin{theorem}
\label{thm:no vortex}
Suppose $Q$ is mutation-acyclic, and $v$ is a vertex in $Q$.
Then the subquiver of $Q$ supported by $\Out(v)$ (resp.\ $\In(v)$) is acyclic.
\end{theorem}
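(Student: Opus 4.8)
The plan is to argue by contradiction: suppose the subquiver $Q'$ of $Q$ supported by $\Out(v)$ contains a directed cycle, and derive a contradiction with mutation-acyclicity. The natural tool is the quasi-Cartan machinery of A.~Seven reviewed in Section~\ref{sec:quivs}, since Theorem~\ref{thm:SevenAdmissibleCompanions} and Theorem~\ref{thm:seven A properties} encode exactly the kind of orientation constraint we want to violate. Pick an acyclic initial quiver $Q_0$ in the mutation class and a vertex $t \in \mathbb{T}_n$ with $Q = Q_t$; then $A_t = C_t^T A_{t_0} C_t$ is an admissible quasi-Cartan companion of $Q$. First I would reduce to a minimal configuration inside $\Out(v)$: among directed cycles in $Q'$, the homological/combinatorial structure lets me extract a chordless cycle $\mathcal{O} = (w_0 - w_1 - \cdots - w_\ell = w_0)$ in $K_{Q'}$ that is actually \emph{oriented} in $Q'$ (this step needs a small lemma: a shortest directed cycle in a quiver, read as an undirected cycle in $K$, is chordless — a chord would either create a shorter directed cycle or the undirected cycle plus chord would not have been minimal; I need to be slightly careful because a chord could be oriented either way, but a standard shortest-cycle argument handles it).

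Next, I would look at this oriented chordless cycle $\mathcal{O}$ together with the vertex $v$. Since every $w_i \in \Out(v)$, we have arrows $v \rightarrow w_i$ for all $i$, so in $Q$ (not $Q'$) the vertex $v$ is connected to every vertex of $\mathcal{O}$. Now consider the chordless cycles of the form $(v - w_i - w_{i+1} - v)$: each consists of the arrow $v \rightarrow w_i$, the arrow $w_i \rightarrow w_{i+1}$ (from the orientation of $\mathcal{O}$), and the arrow $w_{i+1} \rightarrow v$? — no: the last arrow is $v \rightarrow w_{i+1}$, so the triangle $(v - w_i - w_{i+1} - v)$ is \emph{not} oriented (it has the shape of two arrows out of $v$). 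By admissibility of $A_t$ (Definition~\ref{def:admissible}), a non-oriented chordless cycle has an even number of positive entries among $a_{v w_i}, a_{w_i w_{i+1}}, a_{w_{i+1} v}$, while the oriented cycle $\mathcal{O}$ itself has an odd number of positive $a_{w_i w_{i+1}}$. I would combine these parity constraints: summing the parity conditions for the triangles $(v-w_i-w_{i+1}-v)$ around all $i$, the contributions $a_{v w_i}$ each appear twice and cancel mod $2$, leaving $\sum_i [a_{w_i w_{i+1}} > 0] \equiv 0 \pmod 2$, which contradicts the admissibility parity for the oriented cycle $\mathcal{O}$. That contradiction proves $\Out(v)$ is acyclic. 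The statement for $\In(v)$ follows by applying the result to the opposite quiver $Q^{op}$, which is mutation-acyclic whenever $Q$ is (mutation commutes with reversing all arrows), and noting $\In_Q(v) = \Out_{Q^{op}}(v)$.

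The main obstacle I anticipate is the first reduction step — extracting an \emph{oriented chordless} cycle inside $\Out(v)$, and more importantly verifying that the triangles $(v - w_i - w_{i+1} - v)$ are themselves chordless in $K_Q$ so that admissibility applies to them. Chords of such a triangle would be additional edges among $\{v, w_i, w_{i+1}\}$, but these are exactly the edges of $K_Q$ restricted to three vertices, so the triangle is automatically chordless as a $3$-cycle; the real subtlety is that $\mathcal{O}$ must be chordless \emph{in $K_{Q'}$}, i.e.\ no edges $w_i - w_j$ for $|i-j| \neq 1$, and I must ensure such a chord (if present in the original directed cycle) can be used to pass to a shorter directed cycle still lying in $\Out(v)$ — which it does, since $\Out(v)$ is closed under taking induced subquivers. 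I would also double-check the precise parity bookkeeping in Definition~\ref{def:admissible}, in particular the indexing convention ``$i < \ell$'' so that the wrap-around edge is not double-counted, and that orientations of $\mathcal{O}$ as forward- versus backward-oriented do not affect the parity (they don't, since reversing traversal direction of $\mathcal{O}$ and relabeling preserves the count of positive $a_{w_i w_{i+1}}$). As a sanity check, Theorem~\ref{thm:seven A properties} already tells us every oriented path in $Q_t$ has at most one positive entry in $A_t$, which is consistent with — and in the minimal case nearly equivalent to — the parity contradiction above, so I expect the argument to go through cleanly once the combinatorial cycle lemma is nailed down.
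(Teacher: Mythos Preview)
Your proposal is correct and follows essentially the same approach as the paper: both use Seven's admissible quasi-Cartan companion, extract an oriented chordless cycle $\mathcal{O}$ inside $\Out(v)$, and derive a parity contradiction from the non-oriented triangles $(v - w_i - w_{i+1} - v)$. Your global mod-$2$ summation over all triangles (where the $a_{v w_i}$ contributions cancel in pairs, forcing $\sum_i [a_{w_i w_{i+1}}>0]\equiv 0$) is a slight streamlining of the paper's version, which instead normalizes the signs $a_{w_i w_{i+1}}$ via simultaneous row/column sign toggles and then chases the signs $a_{v w_i}$ around the cycle by induction to reach the same contradiction.
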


This theorem generalizes an example provided in \cite[Figure 1]{SevenAMatrices}. 

\begin{proof}
As $Q$ is mutation-acyclic, by Theorem~\ref{thm:SevenAdmissibleCompanions}, $Q$ has an admissible quasi-Cartan companion $A = (a_{pq})$ (Definition~\ref{def:admissible}).
Thus every subquiver of $Q$ also has an admissible quasi-Cartan companion. 
(By restricting the rows and columns of $A$.)

Thus we may reduce to the case that $v$ is a source (resp.\ sink), and $v$ is adjacent to all other vertices in $Q$.
If $\Out(v)$ (resp.\ $\In(v)$) contains an oriented cycle then it also contains an oriented chordless cycle (by a standard subdivision argument, any chord results in a smaller oriented cycle; cf.\ Lemma~\ref{lem:cycles restrict to chordless}).

Assume for contradiction that the undirected simple graph $K_Q$ consists only of a chordless forward-oriented cycle $\mathcal O = (w_0 - \cdots - w_\ell = w_0)$ and the source vertex $v$ with edges $w_i - v$ (the case with $v$ a sink is identical). 
Then an odd number of the $a_{w_i w_{i+1}}$ are positive.
In particular, at least one $a_{w_i w_{i+1}}$ is positive. 

Without loss of generality, say $a_{w_0 w_1} > 0$. 
Note that $A$ remains an admissible quasi-Cartan companion if we simultaneously toggle the signs of the row and column corresponding to $w_i$.
By repeatedly toggling signs, we may assume that $a_{w_i w_{i+1}} < 0$ for all $i>0$ [why?].
However, each of the chordless cycles $(w_i - w_{i+1} - v - w_i)$ in~$K_Q$ are not oriented, and so any admissible companion must satisfy an even number of the inequalities~${a_{w_i w_{i+1}}>0}, {a_{w_{i+1} v}>0},$ and~${a_{w_i v}>0}$.
In particular, one of $a_{w_0 v}$ or~$a_{w_{1} v}$ is positive, without loss of generality say $a_{w_{1} v} > 0$.
As~$a_{w_1 w_2} < 0$, we must have~$a_{w_2 v} > 0$ for the chordless cycle $(w_1 - w_{2} - v - w_1)$ to have an even number of positive signs, and by induction we find that $a_{w_i v} > 0$ for all $i$.
But then the chordless cycle $(w_0 - w_{1} - v - w_0)$ has three positive entries in~$A$, a contradiction.
\end{proof}

\begin{remark}
Suppose $Q$ is mutation-acyclic, and $v$ is a vertex in $Q$. 
Partition the other vertices of $Q$ into three sets $\Out(v), \In(v)$ and the set of vertices $S$ which are not adjacent to $v$. 
Consider the subquiver supported by $\Out(v) \cup S$ (resp.\ $\In(v) \cup S$).
Essentially the same argument as in Theorem~\ref{thm:no vortex} implies that every pair of vertices in $\Out(v)$ (resp.\ $\In(v)$) on a chordless oriented cycle in  $\Out(v) \cup S$ (resp.\ $\In(v) \cup S$) must be adjacent. 
Thus, for example, the quiver in Figure~\ref{fig:extended vortexish} is not mutation-acyclic.
\end{remark}

\begin{figure}[ht]
\includegraphics[alt={A six vertex quiver, consisting of an oriented five-cycle v1, v2, v3, v4, v5, v1 and a vertex v6 with arrows from v1 to v6 and v4 to v6. 
There is a single arrow between v2 and v3, v3 and v4, v4 and v5, v5 and v1, as well as three arrows from v1 to v2. Note that the number of arrows is irrelevant for the example; what matters is the orientation of the arrows and which vertices are in the inset or outset.}]{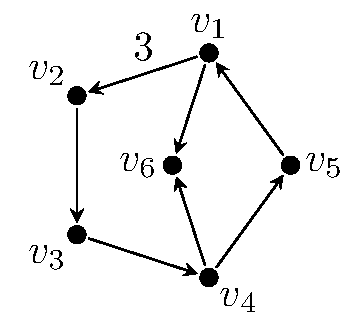}
\caption{A quiver on $6$ vertices which has no admissible quasi-Cartan companion, and so is not mutation-acyclic. The subquiver supported by $v_1, \ldots,v_5$ is an oriented cycle, contains two non-adjacent vertices in $\Out(v_6)$, but no vertices in $\In(v_6)$.}
\label{fig:extended vortexish}
\end{figure}


\pagebreak[2]

The following elementary lemma lets us convert a cycle in the underlying unoriented graph (Definition~\ref{def:underlying unoriented graph}) into a chordless cycle, while keeping a given vertex on an oriented path.

\begin{lemma}
\label{lem:cycles restrict to chordless}
Fix a vertex $v$ in a quiver $Q$.
Let $\mathcal{O}$ be a cycle in $K_Q$, which contains the path $u \rightarrow v \rightarrow w$ for some vertices $u,w$.
Then there exists a chordless cycle that is supported by a subset of the vertices of $\mathcal{O}$ and contains a path $ u' \rightarrow v \rightarrow w'$ for some vertices $u', w'$, not necessarily distinct from $u,w$.
\end{lemma}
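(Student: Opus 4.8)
The plan is to perform a double induction, first reducing to a chordless cycle containing the path $u \to v \to w$ using the standard ``chord-shortening'' argument, but with care taken to preserve a 2-arrow oriented path through $v$. Let $\mathcal{O} = (w_0 - w_1 - \cdots - w_\ell = w_0)$ be the given cycle, and fix the indexing so that for some index $a$ we have $w_{a-1} = u$, $w_a = v$, $w_{a+1} = w$ (indices mod $\ell$). I would argue by induction on $\ell$, the length of $\mathcal{O}$. If $\mathcal{O}$ is already chordless we are done (take $u' = u$, $w' = w$). Otherwise there is a chord, i.e.\ an edge $w_i - w_j$ in $K_Q$ with $i \neq j \pm 1$. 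This chord splits $\mathcal{O}$ into two strictly shorter closed walks $\mathcal{O}_1$ and $\mathcal{O}_2$, each of which is a cycle in $K_Q$ (after deleting repeated vertices if necessary, though in a cycle of $K_Q$ the $w_i$ are already distinct, so each piece is an honest cycle). The key point: since $v = w_a$ lies at exactly one position on $\mathcal{O}$, the vertex $v$ belongs to exactly one of $\mathcal{O}_1, \mathcal{O}_2$ unless $v$ is an endpoint of the chord.

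There are two cases to handle. First, if neither $w_i$ nor $w_j$ equals $v$, then $v$ lies in the interior of one of the two arcs, say the arc from $w_i$ to $w_j$ that contains $w_a$; then in the corresponding piece $\mathcal{O}_s$ the two edges $w_{a-1} - w_a$ and $w_a - w_{a+1}$ are both retained, so $\mathcal{O}_s$ is a strictly shorter cycle still containing the oriented path $u \to v \to w$. Apply the inductive hypothesis to $\mathcal{O}_s$. Second, if the chord has $v$ as one endpoint, say $w_i = v$, then the chord is an edge $v - w_j$ in $Q$. Now I split on the orientation of this edge: if $w_j \to v$ in $Q$, then the piece of $\mathcal{O}$ running $v \to w_{a+1} - \cdots - w_j$ together with the chord $w_j \to v$ is a shorter cycle containing the path $w_j \to v \to w$ (a valid 2-arrow oriented path through $v$, with $u' = w_j$, $w' = w$); if instead $v \to w_j$ in $Q$, then the piece running $w_{a-1} - \cdots - w_j - v$ closed up via the chord contains $u \to v \to w_j$, again a valid oriented path through $v$. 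In either sub-case we obtain a strictly shorter cycle with the required property and finish by induction.

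The base case is a cycle of length $3$, which is automatically chordless (a chord would require two non-consecutive vertices among three, impossible), so the induction terminates. Every step strictly decreases $\ell$, so the recursion bottoms out.

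The main obstacle, and the only part requiring genuine care, is the case where the chord is incident to $v$ itself: one must verify that after cutting along such a chord, the resulting shorter cycle still contains \emph{some} 2-arrow oriented path through $v$, and this is exactly why the statement only promises vertices $u', w'$ possibly different from $u, w$ rather than retaining the original path. The orientation bookkeeping above — choosing which of the two arcs to keep based on the direction of the chord arrow — resolves this cleanly, but it is the step that the rest of the argument hinges on. The other cases are routine applications of the classical fact that a chord splits a cycle into two shorter cycles whose union (as homology classes, cf.\ Definition~\ref{def:underlying unoriented graph}) is the original.
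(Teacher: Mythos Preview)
Your proof is correct and follows essentially the same approach as the paper's: both argue by induction on the length of $\mathcal{O}$, handle chords not incident to $v$ by passing to the sub-cycle containing the intact path $u\to v\to w$, and handle chords incident to $v$ by replacing one of the edges $u\to v$ or $v\to w$ according to the orientation of the chord. The only cosmetic difference is that the paper disposes of the orientation sub-case by a ``without loss of generality $v\to w'$'' (using the symmetry between $u$ and $w$), whereas you treat the two orientations separately.
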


\begin{proof}
We induct on the size of $\mathcal{O}$.
If $\mathcal{O}$ is chordless (i.e., if $\mathcal{O}$ only contains $u,v,$ and~$w$), then there is nothing to prove. 
Otherwise, $\mathcal{O}$ contains a chord.
If $\mathcal{O}$ contains a chord which does not involve $v$, then we get a shorter chordless cycle by including the chord edge, and removing vertices and edges so that $u,v,$ and $w$ remain. 
Suppose instead that $\mathcal{O}$ contains a chord with the vertex $v$, without loss of generality say~$v \rightarrow w'$ for a vertex $w'$ in $\mathcal O$ and distinct from $u,w$. 
We can replace the edge~$v \rightarrow w$ by~$v \rightarrow w'$ and remove vertices so that $u, v$ and $w'$ remain (and $w$ does not).
The claim follows by induction.
\end{proof}

Finally, we give a new way to check that two COQs are wiggle equivalent.

\begin{lemma}
\label{lem:wiggle equiv if both upper triangular}
Fix a quiver $Q$ and two linear orderings of its vertices $<,$ and $<'$.
Let~$U=(u_{vw})$ and $U'=(u'_{vw})$ be the unipotent companions of the linearly ordered quivers $(Q, <)$ and $(Q, <')$ respectively.
If $u_{vw} = u'_{vw}$ for every pair of vertices $v,w$, then the COQs associated to $(Q, <)$ and $(Q, <')$ are wiggle equivalent.
\end{lemma}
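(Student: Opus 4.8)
The plan is to recognize the hypothesis $u_{vw}=u'_{vw}$ for all $v,w$ as a statement that all winding numbers agree, then invoke Theorem~\ref{th:wiggle/winding}. First I would fix attention on an arbitrary undirected cycle $\mathcal{O}=(w_0-w_1-\cdots-w_k=w_0)$ in $K_Q$ and show that $\wind_<(\mathcal{O})=\wind_{<'}(\mathcal{O})$, where by slight abuse I write $\wind_<$ for the winding number with respect to the cyclic ordering obtained by closing up the linear order $<$. Granting this for every cycle, Theorem~\ref{th:wiggle/winding} immediately gives that the two COQs are wiggle equivalent, which is exactly the claim.

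The key observation is that, for a single edge $w_i-w_{i+1}$ of the cycle, the contribution to the winding number formula~\eqref{eq:wind formula} can be read off from the unipotent companion together with the sign of $b_{w_i w_{i+1}}$. Indeed, if $w_i\to w_{i+1}$ in $Q$ (so $b_{w_i w_{i+1}}>0$), then in the linear order $<$ exactly one of $u_{w_i w_{i+1}}$, $u_{w_{i+1} w_i}$ is the (negated) arrow count and the other is $0$, and which one is nonzero records whether $w_i<w_{i+1}$ or $w_i>w_{i+1}$; this is precisely the datum needed to decide whether this edge contributes $+1$ to the first sum in~\eqref{eq:wind formula}. Similarly when $w_i\leftarrow w_{i+1}$, the pattern of which of $u_{w_i w_{i+1}}, u_{w_{i+1} w_i}$ vanishes records whether this edge contributes $-1$ to the second sum. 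Since by hypothesis $U$ and $U'$ have exactly the same entries, and since the underlying quiver $Q$ (hence every $b_{w_i w_{i+1}}$) is the same, each edge makes the same contribution to $\wind_<(\mathcal{O})$ as to $\wind_{<'}(\mathcal{O})$. Summing over the edges of $\mathcal{O}$ yields $\wind_<(\mathcal{O})=\wind_{<'}(\mathcal{O})$.

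More directly, one can phrase the edge-by-edge comparison without ever mentioning orientations: the quantity $\operatorname{sgn}(b_{vw})$ determines, from the pair $(u_{vw},u_{wv})$, exactly the indicator appearing in~\eqref{eq:wind formula}, so the whole winding-number formula is a function of $B_Q$ together with the support pattern of $U$ (i.e.\ which off-diagonal entries are zero), and in fact of the full matrix $U$ alone since $B_Q=U^T-U$. Two linearly ordered quivers on the same $Q$ with literally equal unipotent companions therefore have identical winding numbers on every cycle. I would spell this out as a short lemma-internal computation rather than a separate statement.

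The only mild subtlety—and the step I expect to require the most care—is bookkeeping the case analysis for a single edge: one must check all the sign/position combinations ($w_i\to w_{i+1}$ versus $w_i\leftarrow w_{i+1}$, crossed with $w_i< w_{i+1}$ versus $w_i> w_{i+1}$) and confirm that in each case the contribution to~\eqref{eq:wind formula} is detected correctly by the entries $u_{w_i w_{i+1}}$ and $u_{w_{i+1}w_i}$. This is routine but must be done consistently with the sign conventions of Definitions~\ref{def:B matrix} and~\ref{def:unipotent companion}. Once that per-edge dictionary is in place, summing and applying Theorem~\ref{th:wiggle/winding} finishes the proof with no further work.
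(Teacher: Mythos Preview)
Your proposal is correct and follows essentially the same approach as the paper: reduce to equality of winding numbers via Theorem~\ref{th:wiggle/winding}, then observe that for each edge of a cycle the contribution to~\eqref{eq:wind formula} is determined by which of $u_{w_i w_{i+1}}$, $u_{w_{i+1} w_i}$ is nonzero. The paper compresses your per-edge case analysis into the single remark that (for adjacent vertices) $v<w$ iff $u_{vw}\neq 0$, but the content is identical.
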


\begin{remark}
Note that the condition $u_{vw} = u'_{vw}$ for each pair of vertices is \emph{not} the same as requiring that the matrices $U$ and $U'$ are equal when they are written in their own respective linear orderings.
For example, the vertex $v$ in the first subscript may refer to a different row than in the second.
\end{remark}

\begin{proof}[Proof of Lemma~\ref{lem:wiggle equiv if both upper triangular}]
By Theorem~\ref{th:wiggle/winding}, it suffices to check that both COQs have the same winding numbers.
From \eqref{eq:wind formula}, it suffices to identify the arrows which ``wrap around'' the respective linear orderings (and, if they differ, their orientations).
For any two vertices $v,w$ we have $v < w$ if and only if $u_{vw} \neq 0$.
Thus the same arrows contribute regardless of if we use the linear ordering $<$ or $<'$, and so the winding numbers agree.
\end{proof}

\begin{example}
Let $Q$ be the acyclic quiver described in Figure~\ref{fig:cyclic orderings}.
Consider the two unipotent companions from different linear orderings of $Q$ below.
$$U= 
\begin{pmatrix} 1 & 3 &  2 & 1 \\ 0 & 1 & -1 & -1 \\ 0 & 0 & 1 & 0\\ 0 & 0 & 0 & 1 \end{pmatrix}
\quad 
U' = 
\begin{pmatrix} 1 & 3 &  1 & 2 \\ 0 & 1 & -1 & -1 \\ 0 & 0 & 1 & 0\\ 0 & 0 & 0 & 1 \end{pmatrix}$$
$$ \quad \quad v_4 < v_1 < v_2 < v_3  \quad \quad \quad \quad v_4 < v_1 < v_3 < v_2 $$
Each entry of $U$ matches $U'$; for example, the $2$ in the first row and third column of~$U$ corresponds to $u_{v_4, v_2}$ and agrees with the entry in the first row and fourth column of~$U'$. 
Thus $U$ and $U'$ are wiggle equivalent, indeed they are related by the wiggle~$(v_2 v_3)$.
\end{example}

\hide{
\newpage

\section{Admissible+ companions give proper}

\cS{pretend this section doesn't exist.}

\begin{lemma}
From \cite{COQ}, we know that every totally proper ordering gives an admissible quasi-Cartan companion via constructing an associated unitpotent companion $U$ and performing $A_U = U + U^T.$
\end{lemma}

Given a quasi-Cartan companion of a quiver, we can construct a unipotent companion (which may not be upper triangular) as follows.

\begin{lemma}
\label{lem:B and A force U}
Given a $B$-matrix $B = (b_{ij})$ and a quasi-Cartan companion matrix $A = (a_{ij})$, 
there is a unique matrix \cS{need not be a companion, yes?}$U_A = (u_{ij})$ such that $U_A^T - U_A = B$ and $U_A^T + U_A = A$.
Specifically, 
\begin{equation}
\label{eq:U(A)}
u_{ij} = \begin{cases}a_{ij} & \text{if } a_{ij} = - b_{ij},\\ 1 & \text{if } i=j,   \\ 0 & \text{else.} \end{cases}
\end{equation}
\end{lemma}

Note that $A_{U_A} = A$. \cS{So this is an inverse operation of our admissible A matricies constructed before.}

\begin{remark}
In general, the matrix $U_A$ need not be upper triangular with respect to \emph{any} linear order of the vertices.
\end{remark}

\begin{problem}
Can every admissible be reflected into giving proper? Or at least unipotent?
\end{problem}

\begin{example}
\cS{copy the example about an admissible but not order generating guy. Do a computer search for some small quivers.}
\end{example}

\begin{proposition}
\label{prop:A matrix mutation from unipotent}
\cS{we should be able to show some mutations are A mutations because they are proper. See if its easy.}
\end{proposition}
} 

\newpage

\section{{Proof of Theorem~\ref{thm:acyclic totally proper}}}
\label{sec:thm}

We begin by establishing and recalling notation for this section. 
Using the notation from Definition~\ref{def:indexing}, suppose our initial quiver $\Qinit$ is acyclic with (mutable) vertices $v_1, \ldots, v_n$. 
Fix a quiver $\tilde Q_t \in [ \widehat \Qinit ]$, with mutable part $\Qtmut$. 
Thus~$C_t$ denotes the $C$-matrix of $\Qtmut$, and $B_t$ denotes the $B$-matrix of $\Qtmut$.
Finally, recall from Definition~\ref{def:A t} the associated quasi-Cartan companions $A_{t_0}$ and $A_t = (a_{ij;t}) = C_t^T A_{t_0} C_t$.

\begin{definition}
\label{def:U t}
We associate to each $t \in \mathbb{T}_n$ an additional matrix 
$$U_t = (u_{vw;t}) = (A_t - B_t)/2.$$ 
Observe that $u_{vv;t}=1$ and $U_t^T - U_t = B_t$.
\end{definition}

\begin{example}
\label{eg:E6}
Let $\Qinit$ be the acyclic quiver of type $E_6$ shown in Figure~\ref{fig:E6}, and let $t \in \mathbb T_n$ be the vertex such that 
$$ t_0 \stackrel{3}{\text{---}} t_1 \stackrel{2}{\text{---}} t_2 \stackrel{5}{\text{---}} t_3 \stackrel{4}{\text{---}} t_4 \stackrel{2}{\text{---}} t_5 \stackrel{1}{\text{---}} t_6=t.$$ 

In this case,
$$ B_t = \begin{psmallmatrix}
0 & 0 & 0 & 1 & 0 & -1 \\
0 & 0 & -1 & -1 & 1 & 1 \\
0 & 1 & 0 & 0 & -1 & -1 \\
-1 & 1 & 0 & 0 & 0 & 0 \\
0 & -1 & 1 & 0 & 0 & 0 \\
1 & -1 & 1 & 0 & 0 & 0
\end{psmallmatrix}, \quad C_t = 
\begin{psmallmatrix}
-1 & 0 & 0 & 0 & 0 & 1 \\
0 & 0 & 0 & -1 & 0 & 1 \\
0 & -1 & 0 & 0 & 0 & 1 \\
0 & -1 & 1 & 0 & 0 & 0 \\
0 & 0 & 0 & 0 & -1 & 0 \\
0 & 0 & 0 & 0 & 0 & 1
\end{psmallmatrix},$$
and therefore 
$$A_t = \begin{psmallmatrix}
2 & 0 & 0 & -1 & 0 & -1 \\
0 & 2 & -1 & -1 & -1 & 1 \\
0 & -1 & 2 & 0 & 1 & -1 \\
-1 & -1 & 0 & 2 & 0 & 0 \\
0 & -1 & 1 & 0 & 2 & 0 \\
-1 & 1 & -1 & 0 & 0 & 2
\end{psmallmatrix}, \quad U_t = \begin{psmallmatrix}
1 & 0 & 0 & -1 & 0 & 0 \\
0 & 1 & 0 & 0 & -1 & 0 \\
0 & -1 & 1 & 0 & 1 & 0 \\
0 & -1 & 0 & 1 & 0 & 0 \\
0 & 0 & 0 & 0 & 1 & 0 \\
-1 & 1 & -1 & 0 & 0 & 1
\end{psmallmatrix}.$$
All of the above is written using the linear order $v_1 < v_2 < \cdots < v_6$.
\end{example} 

\begin{figure}[ht]

{ 

\newcommand\radius{0.9cm}
\includegraphics[alt={A tree quiver on six vertices v1, v2, v3, v4, v5, v6, with an arrow from v1 to v2, v2 to v3, v3 to v4, v4 to v5, and v3 to v6.}]{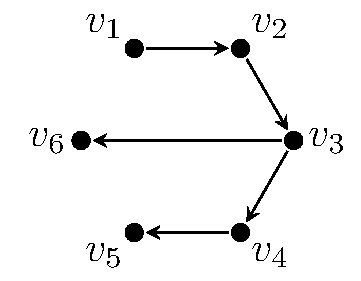}
\quad 
\includegraphics[alt={A tree quiver on six vertices v1, v2, v3, v4, v5, v6, with an arrow from v1 to v4, v4 to v2, v2 to v6, v6 to v1, v6 to v3, v3 to v2, v2 to v5, and v5 to v3.}]{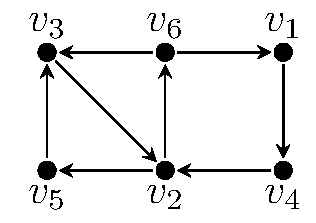}
}

$\Qinit$ \quad \quad \quad \quad \quad \quad \quad $\Qtmut$
\caption{The quiver $\Qinit$ of type $E_6$, and a mutation equivalent quiver $\Qtmut$.}
\label{fig:E6}
\end{figure}

We will construct a family of linear orderings such that $U_t$ is upper triangular when written using one of them (see Lemma~\ref{lem:U unipotent}). 
To facilitate the construction, we first create several new directed graphs from $\Qtmut$.

\begin{definition}
\label{def:tildeQ}
Fix a vertex $v_j$ of $\Qtmut$. 
Let $\Lgraph_{v_j}(\Qtmut)$ be the directed graph obtained by performing the following operations on~$\Qtmut$:
\begin{enumerate}
\item if $v_j$ is red (resp., green), then for each oriented $2$ path $v_i \rightarrow v_j \rightarrow v_k$ with $v_i,v_k$ both green (resp., red) in $\Qtmut$, add an arrow $v_k \stackrel{v_j}{\rightarrow} v_i$ labeled with the vertex $v_j$;
\item reverse all arrows from a red vertex directed toward a green vertex. Equivalently, by Theorem~\ref{thm:seven A properties}, reverse all arrows $v \rightarrow w$ with $a_{v w;t} > 0$. 
\end{enumerate}
\end{definition}

\begin{example}
\label{eg:E6v2}
Continuing with Example~\ref{eg:E6}, we see from the columns of $C_t$ that~$v_3, v_6$ are green while $v_1, v_2, v_4,$ and $v_5$ are red in $Q_t$.
Therefore each $\Lgraph_{v_i}(\Qtmut)$ will reverse the arrows $v_2 \rightarrow v_6$ and $v_5 \rightarrow v_3$ (agreeing with the two positive off-diagonal entries of~$A_t$), and we only add a new arrow to $\Lgraph_{v_i}(\Qtmut)$ if $i \in\{2,3,6\}$.
\end{example}

\usetikzlibrary{patterns}

\begin{figure}[ht]
{ 
\newcommand\hgap{3cm}
\newcommand\tildeI[1]{
	\filldraw[magenta] (1,1) circle (2pt);
	\filldraw[magenta] (0,0) circle (2pt);
	\filldraw[black!60!green] (-1,1) circle (2pt);
	\filldraw[magenta] (1,0) circle (2pt) ;
	\filldraw[magenta] (-1,0) circle (2pt);
	\filldraw[black!60!green] (0,1) circle (2pt);

	\filldraw[black] (1,1) circle (0pt) node[above] {$v_1$} coordinate (a);
	\filldraw[black] (0,0) circle (0pt) node[below] {$v_2$} coordinate (b);
	\filldraw[black] (-1,1) circle (0pt) node[above] {$v_3$} coordinate (c);
	\filldraw[black] (1,0) circle (0pt) node[below] {$v_4$} coordinate (d);
	\filldraw[black] (-1,0) circle (0pt) node[below] {$v_5$} coordinate (e);
	\filldraw[black] (0,1) circle (0pt) node[above] {$v_6$} coordinate (f);
	
	\draw[black, -{stealth}, shorten >=3pt, shorten <= 3pt] (a) -- (d);
	\draw[black, -{stealth}, shorten >=3pt, shorten <= 3pt] (d) -- (b); 
	\draw[black, -{stealth}, shorten >=3pt, shorten <= 3pt] (f) -- (b);
	\draw[black, -{stealth}, shorten >=3pt, shorten <= 3pt] (f) -- (a);
	\draw[black, -{stealth}, shorten >=3pt, shorten <= 3pt] (b) -- (e); 
	\draw[black, -{stealth}, shorten >=3pt, shorten <= 3pt] (c) -- (e);
	\draw[black, -{stealth}, shorten >=3pt, shorten <= 3pt] (c) -- (b);
	\draw[black, -{stealth}, shorten >=3pt, shorten <= 3pt] (f) -- (c);
	\draw (0,-0.3) node[below] {$\Lgraph_{v_{#1}}(\Qtmut)$};
}
\includegraphics[alt={The directed graphs associated to vertices v1, v2, and v3. In each, vertices v3 and v6 are colored green, while v1, v2, v4, and v5 are colored red. Each has six vertices v1, v2, v3, v4, v5, v6, and all have edges from v1 to v4, v4 to v2, v2 to v5, v6 to v3, v6 to v2, v6 to v1, v3 to v2, and v3 to v5. The graph associated to v1 has no other edges.}]{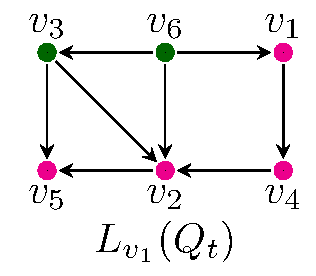}
\quad
\includegraphics[alt={The graph associated to v2 has an additional edge from v6 to v3 which is labeled v2.}]{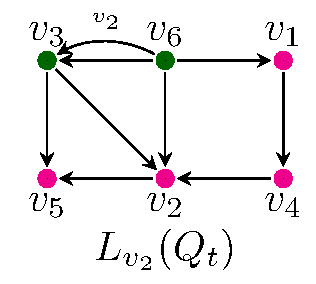}
\quad
\includegraphics[alt={ The graph associated to v3 has an additional edge from v2 to v5 labeled v3.}]{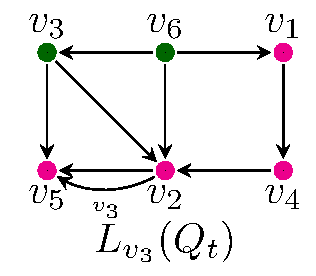}

\vspace{0.2cm}

\includegraphics[alt={The graph associated to v4 is identical to that of v1.}]{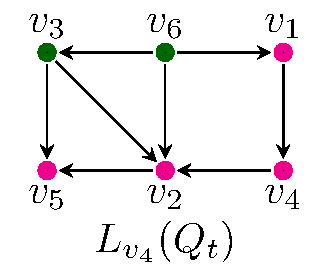}
\quad
\includegraphics[alt={The graph associated to v5 is identical to that of v1.}]{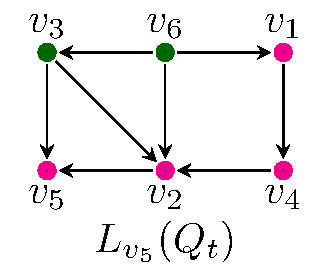}
\quad
\includegraphics[alt={The graph associated to v6 is identical to that of v1.}]{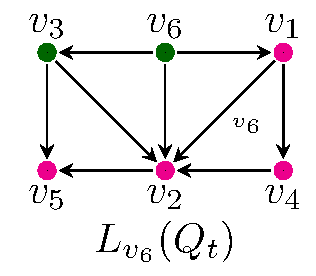}
}
\caption{The six directed graphs $\Lgraph_{v_i}(\Qtmut)$ associated to $\Qtmut$ in Example~\ref{eg:E6}. 
Vertices~$v_3, v_6$ are green, while vertices $v_1, v_2, v_4, v_5$ are red.} 
\label{fig:Qtildes}
\end{figure}

\begin{lemma}
\label{lem:Q tilde acyclic}
Each directed graph $\Lgraph_{v}(\Qtmut)$ is acyclic. (In particular, there are no oriented $2$-cycles; $\Lgraph_{v}(\Qtmut)$ is an acyclic quiver with some marked arrows.)
Furthermore, the subquiver of $\Qtmut$ supported by all red (resp., green) vertices is acyclic.
\end{lemma}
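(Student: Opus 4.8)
The plan is to prove the three assertions of Lemma~\ref{lem:Q tilde acyclic} together, leveraging Seven's structural results (Theorems~\ref{thm:SevenAdmissibleCompanions} and~\ref{thm:seven A properties}) and the "no vortex" result Theorem~\ref{thm:no vortex}. First I would treat the final claim, since it is the cleanest: the subquiver of $\Qtmut$ supported by all red vertices has an admissible quasi-Cartan companion (restrict $A_t$ to the red rows and columns). By Theorem~\ref{thm:seven A properties}, for an arrow $u \rightarrow v$ with both $u,v$ red we have $a_{uv;t} < 0$, so \emph{every} off-diagonal entry of this restricted companion is negative. A symmetric matrix that is an admissible companion of a quiver in which every chordless cycle would have zero positive signs forces every chordless cycle in the restricted $K$-graph to be non-oriented; by the standard subdivision argument (Lemma~\ref{lem:cycles restrict to chordless}, applied without the path-through-$v$ bookkeeping), any oriented cycle would contain a chordless oriented subcycle, a contradiction. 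Hence the red subquiver is acyclic, and the green case is identical (or follows by reversing all arrows, which swaps red and green).

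Next I would handle the acyclicity of $\Lgraph_{v}(\Qtmut)$. The graph $\Lgraph_v$ is built from $\Qtmut$ by (step 2) reversing every arrow $u \rightarrow v'$ with $a_{uv';t}>0$, i.e.\ every arrow from a red vertex to a green vertex, and (step 1) adding, when $v$ is red, an arrow $v_k \to v_i$ for each $2$-path $v_i \to v \to v_k$ with $v_i,v_k$ both green (and dually when $v$ is green). Ignoring the added arrows for a moment: after step~2, every arrow of the resulting quiver either goes red$\to$red, green$\to$green, or green$\to$red (the red$\to$green arrows having been flipped to green$\to$red). I would argue that this intermediate quiver is acyclic by exhibiting a topological order: put all green vertices before all red vertices, order the green vertices by any acyclic order of the green subquiver (which exists by the last claim), and order the red vertices by any acyclic order of the red subquiver. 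Every arrow then respects this order, since green$\to$red arrows go forward, green$\to$green and red$\to$red arrows go forward within their blocks, and there are no red$\to$green arrows left. So the step-2 quiver is acyclic; in particular it has no $2$-cycles, which also shows step~2 is unambiguous.

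Finally I would incorporate the arrows added in step~1. Suppose $v$ is red (the green case is dual); the added arrows $v_k \to v_i$ run between green vertices, from $\Out(v)$ to $\In(v)$ (in $\Qtmut$). Here is where Theorem~\ref{thm:no vortex} enters: since $\Qtmut$ is mutation-acyclic, $\Out(v)$ and $\In(v)$ each support acyclic subquivers. I expect the main obstacle to be ruling out a directed cycle that alternates between original arrows of $\Lgraph_v$ and newly added arrows. The key point should be that in $\Lgraph_v$, after step~2, there are no arrows \emph{into} a green vertex of $\In(v)$ coming from a green vertex of $\Out(v)$ other than via the new arrows, and no original arrows from $\In(v)$-green to $\Out(v)$-green in the "wrong" direction; more precisely I would refine the topological order from the previous paragraph so that within the green block the vertices of $\In(v)$ come \emph{before} those of $\Out(v)$ — this is possible precisely because $\Out(v)$ is acyclic (Theorem~\ref{thm:no vortex}) and, by Seven's sign constraints on the $2$-paths through $v$, there is no original green$\to$green arrow forcing the opposite order among the relevant vertices. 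With $\In(v)$-green before $\Out(v)$-green, the new arrows $v_k \to v_i$ (with $v_k \in \Out(v)$, $v_i \in \In(v)$) would go backward, so I instead reverse the order: place $\Out(v)$-green before $\In(v)$-green, check the original arrows of $\Lgraph_v$ still respect it (using that any original green$\to$green arrow between $\In(v)$ and $\Out(v)$ vertices, together with the $2$-path through $v$, would produce a forbidden configuration by Theorems~\ref{thm:seven A properties} and~\ref{thm:no vortex}), and conclude $\Lgraph_v$ is acyclic. The delicate bookkeeping is exactly in verifying that no original arrow obstructs this reordering; I would isolate that as a short case analysis on the signs of $A_t$ along the two $2$-paths $v_i \to v \to v_k$ and the relevant chordless triangles, analogous to the argument in the proof of Theorem~\ref{thm:no vortex}.
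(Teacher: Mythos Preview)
Your treatment of the monochromatic subquivers is both correct and more direct than the paper's. You observe that restricting $A_t$ to the red (resp.\ green) vertices gives an admissible quasi-Cartan companion all of whose off-diagonal entries are negative (by Theorem~\ref{thm:seven A properties}), so any oriented chordless cycle there would carry zero positive signs, violating admissibility. The paper instead introduces the auxiliary quiver~$\boxQ_t$ of Definition~\ref{def:boxQ} and applies Theorem~\ref{thm:no vortex} to the frozen vertex~$\mathring v$, whose outset and inset are exactly the green and red vertices; your route avoids this detour entirely. Your topological-order argument for the step-2 quiver is likewise correct and is simply a repackaging of the paper's observation that after step~2 no arrow runs from a red vertex to a green one.

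The genuine gap is in your handling of the step-1 arrows. For the proposed topological order on the green block to accommodate the new arrows (which, when $v$ is red, run from $\Out(v)\cap\{\text{green}\}$ to $\In(v)\cap\{\text{green}\}$), you must rule out every directed \emph{path} of green vertices in $\Qtmut$ from $\In(v)$ to $\Out(v)$, not just direct arrows. Your proposed ``case analysis on \ldots\ the relevant chordless triangles'' only addresses paths of length one; a green path $v_i\to w_2\to\cdots\to w_{m-1}\to v_k$ whose intermediate vertices lie outside $\In(v)\cup\Out(v)$ is not excluded by any triangle, and together with a labeled arrow $v_k\to v_i$ it would close a directed cycle in $\Lgraph_v(\Qtmut)$. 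The paper handles exactly this situation: it assumes a directed cycle in $\Lgraph_v(\Qtmut)$, replaces each labeled arrow by the $2$-path through $v$ in $\Qtmut$ to obtain a cycle $\mathcal O$ in $K_{\Qtmut}$, reduces $\mathcal O$ to a chordless cycle through $v$ via Lemma~\ref{lem:cycles restrict to chordless} (using the acyclicity of the green subquiver to see that any green chord inherits the orientation of the subpath it shortcuts, so the reduced cycle is still non-oriented), and then observes that this non-oriented chordless cycle carries exactly one positive sign in $A_t$ (on the edge from $v$ to its $\Out(v)$-neighbour), contradicting admissibility. Your appeal to Theorem~\ref{thm:no vortex} for $\Out(v)$ and $\In(v)$ is not the right input at this step; what is actually needed is the admissibility of $A_t$ along these longer cycles through~$v$.
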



\begin{proof}
By construction, there are no directed paths from a red vertex to a green vertex in $\Lgraph_{v}(\Qtmut)$.
So any oriented cycle in $\Lgraph_v(\Qtmut)$ must have all vertices of the same color (red or green). 

We first consider oriented cycles which do not use any arrows labeled $v$, i.e.\ oriented cycles in $\Qtmut$ which all have the same color.
Let $\boxQ_t$ be constructed from $\tilde Q_t$ as in Definition~\ref{def:boxQ}. 
By construction of $\boxQ_t$, the set $\In(\mathring{v})$ (resp., $\Out(\mathring{v})$) is precisely the red (resp., green) vertices in $\Qtmut$.
As $\boxQ_0$ is acyclic, by Corollary~\ref{cor:Cmats and gluing}, $\boxQ_t$ is mutation-acyclic. 
By Theorem~\ref{thm:no vortex}, $\In(\mathring{v})$ (resp., $\Out(\mathring{v})$) is an acyclic subquiver of $\boxQ_t$.
Thus there are no oriented cycles of all red (resp., green) vertices in $\Qtmut$.

Without loss of generality, suppose $v$ is red. 
The case where $v$ is green is similar.
Then we do not add any arrows labeled $v$ between red 
vertices, so we have shown there are no oriented cycles of red 
vertices.
Further, any directed cycle involving green 
vertices must use at least one arrow labeled $v$ (those created in Step~$1$ above).
As all arrows labeled $v$ in $\Lgraph_{v}(\Qtmut)$ are oriented from $\Out(v)$ towards $\In(v)$, any oriented cycle must also involve at least one arrow which is not labeled $v$.
We will now argue that any other cycles in $\Lgraph_{v}(\Qtmut)$ imply a problematic cycle $\mathcal{O}$ in the underlying unoriented simple graph $K_{\Qtmut}$ of $\Qtmut$. 

Suppose there is an oriented cycle $w_0 \stackrel{v}{\rightarrow} w_1 \rightarrow \cdots \rightarrow w_k = w_0$ in $\Lgraph_v(\Qtmut)$ that involves just one arrow $w_0 \stackrel{v}{\rightarrow} w_1$ labeled $v$.
By construction of $\Lgraph_v(\Qtmut)$, there is a path~${w_0 \leftarrow v \leftarrow w_1}$ in $\Qtmut$. 
Thus there is a cycle ${\mathcal O = (w_0 - v - w_1 - w_2- \cdots - w_k)}$ in~$K_{\Qtmut}$.
If~$\mathcal{O}$ is not chordless then replace it with a chordless cycle containing $v$ by Lemma~\ref{lem:cycles restrict to chordless}. 
Because the subquiver of green vertices in~$\Qtmut$ is acyclic, any chord between the green vertices~$w_i, w_j$ has the same orientation as the directed path it replaced. 
Therefore~$\mathcal{O}$ is a chordless cycle which is not oriented. 
But the only edge on $\mathcal O$ which has a positive entry in~$A_t$ is $w_0 - v$, contradicting Theorem~\ref{thm:seven A properties}.
So there are no cycles in $\Lgraph_v(\Qtmut)$ with a single arrow labeled $v$.

Suppose there is an oriented cycle in $\Lgraph_v(\Qtmut)$ with at least $2$ arrows labeled $v$, say
$$w_0 \stackrel{v}{\rightarrow} w_1 \rightarrow \cdots \rightarrow w_j \stackrel{v}{\rightarrow} w_{j+1} \rightarrow \cdots \rightarrow w_k = w_0$$ 
with $j>1$ and none of the arrows on the path $w_1 \rightarrow w_2 \rightarrow \cdots \rightarrow w_j$ labeled $v$ (the other arrows may or may not be labeled $v$).
Thus $\mathcal O = (v - w_1 - \cdots - w_j - v)$ is a cycle in~$K_{\Qtmut}$ which is {not} oriented.
After applying Lemma~\ref{lem:cycles restrict to chordless} if necessary, we may assume~$\mathcal O$ is a chordless cycle.
But the only edge on $\mathcal O$ which has a positive entry in~$A_t$ is $w_j - v$, contradicting Theorem~\ref{thm:seven A properties}. 
Thus $\Lgraph_v(\Qtmut)$ is acyclic.
\end{proof}

\begin{definition}
\label{def:order from tilde}
A \emph{linear extension} of $\Lgraph_{v}(\Qtmut)$ is a linear order $<$ on the vertices such that $v_i < v_j$ whenever $v_i  \rightarrow v_j$ (it does not matter whether the arrow is labeled~$v$). 
Let $\sigma_{v}$ be a cyclic ordering associated to a linear extension of $\Lgraph_{v}(\Qtmut)$.
\end{definition}

\begin{example}
\label{eg:E6v3}
Continuing with Examples~\ref{eg:E6} and~\ref{eg:E6v2}, 
there are several possible choices for the cyclic orderings $\sigma_{v}$ for each fixed $v$, though all of them result in wiggle equivalent COQs $(\Qtmut, \sigma_v)$. 
We could take $$\sigma_{v_1} = \sigma_{v_2} = \sigma_{v_3} = (v_6, v_1, v_4, v_3, v_2, v_5), \text{ and}$$
$$\sigma_{v_4} = \sigma_{v_5} = \sigma_{v_6} = (v_6, v_3, v_1, v_4, v_2, v_5).$$
\end{example}

\begin{lemma}
\label{lem:U unipotent}
The matrix $U_t=(u_{v_i v_j;t})$ is upper triangular when written with a linear extension $<$ of $\Lgraph_{v}(\Qtmut)$. Thus $U_t$ is a unipotent companion of $(\Qtmut, \sigma_v)$.
\end{lemma}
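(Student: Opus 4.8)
The plan is to show that $U_t = (A_t - B_t)/2$ has zero entries below the diagonal in any linear extension $<$ of $\Lgraph_v(\Qtmut)$, which together with the already-noted facts $u_{v_iv_i;t}=1$ and $U_t^T - U_t = B_t$ will show that $U_t$ is the unipotent companion of the linearly ordered quiver $(\Qtmut, <)$, hence of the COQ $(\Qtmut, \sigma_v)$. So fix vertices $v_i, v_j$ with $v_j < v_i$ in the linear extension; I must prove $u_{v_iv_j;t} = \tfrac12(a_{v_iv_j;t} - b_{v_iv_j}) = 0$, i.e.\ $a_{v_iv_j;t} = b_{v_iv_j}$. Since $A_t$ is a quasi-Cartan companion (Theorem~\ref{thm:SevenAdmissibleCompanions}), we have $a_{v_iv_j;t} = \pm b_{v_iv_j}$ whenever $v_i, v_j$ are adjacent in $\Qtmut$, and both sides are $0$ otherwise; so the content is to rule out the case $a_{v_iv_j;t} = -b_{v_iv_j} \neq 0$, equivalently to show that whenever $v_i, v_j$ are joined by an arrow and $v_j < v_i$, the sign of the arrow matches the sign of $a_{v_iv_j;t}$.

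The key translation is this: by construction of $\Lgraph_v(\Qtmut)$ (Definition~\ref{def:tildeQ}), an arrow $v_i \rightarrow v_j$ of $\Qtmut$ survives with the \emph{same} orientation in $\Lgraph_v(\Qtmut)$ precisely when $a_{v_iv_j;t} \leq 0$ (it is not a red-to-green arrow), and is \emph{reversed} (so that $\Lgraph_v$ contains $v_j \rightarrow v_i$) exactly when $a_{v_iv_j;t} > 0$, by the second bullet of Theorem~\ref{thm:seven A properties}. Meanwhile, since $<$ is a linear extension of $\Lgraph_v(\Qtmut)$, for every (unlabeled) arrow of $\Lgraph_v(\Qtmut)$ the tail precedes the head. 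So I would argue by cases on an arrow between $v_i$ and $v_j$ with $v_j < v_i$: (a) if $\Qtmut$ has $v_j \rightarrow v_i$, then $b_{v_iv_j} = -b_{v_jv_i} < 0$; if this arrow is not reversed it appears in $\Lgraph_v$ as $v_j\rightarrow v_i$, consistent with $v_j<v_i$, and then $a_{v_jv_i;t}\le 0$ forces $a_{v_iv_j;t} = -|b| = b_{v_iv_j}$, giving $u_{v_iv_j;t}=0$; if instead it is reversed, $\Lgraph_v$ has $v_i\rightarrow v_j$, which would force $v_i<v_j$, contradicting $v_j<v_i$. (b) Symmetrically, if $\Qtmut$ has $v_i \rightarrow v_j$ with $v_j < v_i$, then the arrow cannot survive unreversed (that would force $v_i<v_j$), so it must be reversed, meaning $a_{v_iv_j;t}>0$; but $a_{v_iv_j;t}>0$ with $v_i$ being the row index... here I need to check the sign bookkeeping carefully: $b_{v_iv_j} = +|b| > 0$, and a reversed arrow means $a_{v_iv_j;t} = +|b| = b_{v_iv_j}$, so again $u_{v_iv_j;t}=0$. (c) If $v_i,v_j$ are nonadjacent in $\Qtmut$ there is no arrow and $b_{v_iv_j}=0$; but I still need $a_{v_iv_j;t}=0$, which is immediate since $A_t$ is a quasi-Cartan companion of $\Qtmut$ and off-diagonal entries vanish on non-edges.

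The main obstacle I anticipate is bookkeeping the signs correctly — in particular making sure that ``$v_j < v_i$'' combined with ``$\Lgraph_v$ arrow orientations respect $<$'' genuinely pins down the sign of $a_{v_iv_j;t}$ in every case, and that the presence of the extra \emph{labeled} arrows (those added in Step~1 of Definition~\ref{def:tildeQ}) does not interfere. The labeled arrows should be harmless for this argument because they only constrain $<$ further (they are still oriented tail-before-head in any linear extension) and they connect vertices $v_i, v_k$ that lie in $\In(v_j)$ or $\Out(v_j)$; one should check that whenever $\Lgraph_v$ contains a labeled arrow $v_k \stackrel{v_j}{\rightarrow} v_i$, either $v_i, v_k$ are nonadjacent in $\Qtmut$ (so no $b$-entry to worry about) or any genuine arrow between them already forces the needed inequality consistently — and indeed Lemma~\ref{lem:Q tilde acyclic} guarantees $\Lgraph_v(\Qtmut)$ is acyclic so a linear extension exists and is simultaneously compatible with all constraints. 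A clean way to organize the proof is: (1) invoke Lemma~\ref{lem:Q tilde acyclic} to get that a linear extension $<$ exists; (2) note $u_{v_iv_i;t}=1$ and $U_t^T-U_t=B_t$ from Definition~\ref{def:U t}; (3) show $u_{v_iv_j;t}=0$ when $v_j<v_i$ by the case analysis above, using Theorem~\ref{thm:seven A properties} to identify reversed arrows with positive $A_t$-entries and the linear-extension property to control orientations; (4) conclude $U_t$ is upper-triangular unipotent with $U_t^T - U_t = B_t$, hence the unipotent companion of $(\Qtmut,<)$ and therefore of the COQ $(\Qtmut,\sigma_v)$ by Definition~\ref{def:unipotent companion}.
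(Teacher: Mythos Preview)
Your argument is correct and is essentially the contrapositive of the paper's own proof: the paper assumes $u_{v_iv_j;t}\neq 0$, deduces $a_{v_iv_j;t}=-b_{v_iv_j;t}$, and then uses the same dichotomy on $\operatorname{sgn}(a_{v_iv_j;t})$ together with Definition~\ref{def:tildeQ} to conclude there is an arrow $v_i\to v_j$ in $\Lgraph_v(\Qtmut)$, hence $v_i<v_j$. One small slip: the fact that ``reversed in $\Lgraph_v$'' corresponds to $a_{v_iv_j;t}>0$ comes from the \emph{first} bullet of Theorem~\ref{thm:seven A properties}, not the second; and your digression about the labeled arrows is unnecessary, since they only add constraints on $<$ and never correspond to nonzero entries of $U_t$ beyond those already handled.
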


\begin{proof}
Suppose $v_i,v_j$ are vertices such that $u_{v_i v_j;t} \neq 0$. 
We want to show that $v_i < v_j$.
Because $|a_{v_i v_j;t}| = |b_{v_i v_j;t}|$, and $u_{v_i v_j;t} = \frac{a_{v_i v_j;t} - b_{v_i v_j;t}}{2} \neq 0$, we have $a_{v_i v_j;t} = -b_{v_i v_j;t}$.

If $a_{v_i v_j;t}>0$ then $v_i \leftarrow v_j$ (as $b_{v_i v_j;t} < 0$).
By Definition~\ref{def:tildeQ}, the graph $\Lgraph_{v}(\Qtmut)$ has an arrow~${v_i \rightarrow v_j}$. 
Thus $v_i < v_j$ in this case.

Otherwise $a_{v_i v_j;t}<0$ and $v_i \rightarrow v_j$ in~$\Lgraph_{v}(\Qtmut)$, so we have $v_i < v_j$.
\end{proof}

\begin{example}
\label{eg:E6v4}
Continuing with Example~\ref{eg:E6}, we find that $U_t$ is a unipotent companion of $(\Qtmut, \sigma_{v})$, for any vertex $v$. 
As suggested by Example~\ref{eg:E6v3}, if we use the linear order $v_6 < v_3 < v_1< v_4< v_2< v_5$, then
$$ U_t = \begin{pmatrix}
1 & -1 & -1 & 0 & 1 & 0 \\
0 & 1 & 0 & 0 & -1 & 1 \\
0 & 0 & 1 & -1 & 0 & 0 \\
0 & 0 & 0 & 1 & -1 & 0 \\
0 & 0 & 0 & 0 & 1 & -1 \\
0 & 0 & 0 & 0 & 0 & 1
\end{pmatrix}.$$
\end{example}

\pagebreak[2]

\begin{lemma}
\label{lem: Q tilde wiggle}
The COQs $(\Qtmut, \sigma_v), (\Qtmut, \sigma_w)$ are wiggle equivalent for all vertices~$v,w$.
\end{lemma}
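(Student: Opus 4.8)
The plan is to use the winding-number criterion of Theorem~\ref{th:wiggle/winding}: it suffices to show that for every undirected cycle $\mathcal O$ in $K_{\Qtmut}$ we have $\wind_{\sigma_v}(\mathcal O) = \wind_{\sigma_w}(\mathcal O)$. By Lemma~\ref{lem:U unipotent}, the matrix $U_t$ is \emph{simultaneously} a unipotent companion of $(\Qtmut, \sigma_v)$ and of $(\Qtmut, \sigma_w)$: more precisely, $U_t$ becomes upper triangular after reordering its rows and columns by any linear extension $<_v$ of $\Lgraph_v(\Qtmut)$, and likewise for $<_w$. The key observation is that the \emph{entries} $u_{v_i v_j; t}$ of $U_t$ themselves do not depend on which linear extension we choose — only the labeling of rows/columns does. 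Hence this is exactly the situation of Lemma~\ref{lem:wiggle equiv if both upper triangular}: we have one quiver $\Qtmut$, two linear orderings $<_v$ and $<_w$ of its vertices, and unipotent companions $U=(u_{v_iv_j})$ and $U'=(u'_{v_iv_j})$ of $(\Qtmut,<_v)$ and $(\Qtmut,<_w)$ respectively that satisfy $u_{v_iv_j} = u'_{v_iv_j} = u_{v_iv_j;t}$ for every pair of vertices.

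Concretely, I would argue as follows. Fix any linear extension $<_v$ of $\Lgraph_v(\Qtmut)$ and any linear extension $<_w$ of $\Lgraph_w(\Qtmut)$; let $\sigma_v$, $\sigma_w$ be the associated cyclic orderings (this is a legitimate choice since, as noted in Definition~\ref{def:order from tilde} and Example~\ref{eg:E6v3}, different linear extensions give wiggle-equivalent COQs, so it is enough to prove the statement for one choice of each). By Lemma~\ref{lem:U unipotent}, $U_t$ written in the order $<_v$ is the unipotent companion of $(\Qtmut,<_v)$, and $U_t$ written in the order $<_w$ is the unipotent companion of $(\Qtmut,<_w)$. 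In both cases the $(v_i,v_j)$ entry is the fixed number $u_{v_iv_j;t}=(a_{v_iv_j;t}-b_{v_iv_j;t})/2$, independent of the ordering. So the hypothesis of Lemma~\ref{lem:wiggle equiv if both upper triangular} is met, and that lemma yields that $(\Qtmut,\sigma_v)$ and $(\Qtmut,\sigma_w)$ are wiggle equivalent.

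This is essentially a one-step deduction once the two preceding lemmas are in hand, so there is no serious obstacle; the only thing to be careful about is the bookkeeping distinction flagged in the remark after Lemma~\ref{lem:wiggle equiv if both upper triangular} — namely that ``$u_{v_iv_j}=u'_{v_iv_j}$ for all pairs'' is a statement about the common underlying matrix $U_t$ indexed by vertices, not about the two reordered matrices being literally equal. Making that explicit (the entries of $U_t$ are defined vertex-by-vertex in Definition~\ref{def:U t}, so reordering rows and columns does not change the entry attached to a given pair of vertices) is the whole content of the verification, and then transitivity of wiggle equivalence extends the conclusion from this particular pair of representatives $\sigma_v,\sigma_w$ to all choices.
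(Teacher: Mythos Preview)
Your proposal is correct and follows essentially the same route as the paper: invoke Lemma~\ref{lem:U unipotent} to see that $U_t$ is a unipotent companion of both $(\Qtmut,\sigma_v)$ and $(\Qtmut,\sigma_w)$, and then apply Lemma~\ref{lem:wiggle equiv if both upper triangular}. The paper's proof is literally two sentences to this effect; your version is just more explicit about the bookkeeping (which is fine, and your care with the ``entries indexed by vertices'' point is well placed).
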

\begin{proof}
By Lemma~\ref{lem:U unipotent}, the matrix $U_t$ is a unipotent companion of both $(\Qtmut, \sigma_v)$ and~$(\Qtmut, \sigma_w)$. 
The claim follows from Lemma~\ref{lem:wiggle equiv if both upper triangular}.
\end{proof}


\begin{lemma}
\label{lem:U gives proper}
The COQ $(\Qtmut, \sigma_v)$ is proper.
\end{lemma}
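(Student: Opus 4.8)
The strategy is to check the definition of properness (Definition~\ref{def:proper}) vertex by vertex. Fix a vertex $w$ of $\Qtmut$; I must produce a COQ in the wiggle equivalence class of $(\Qtmut,\sigma_v)$ in which $w$ is proper. By Lemma~\ref{lem: Q tilde wiggle} that class coincides with the class of $(\Qtmut,\sigma_w)$, so it suffices to choose the linear extension of $\Lgraph_w(\Qtmut)$ underlying $\sigma_w$ appropriately and then verify that $w$ is proper. I would first record the elementary reformulation of properness in cyclic-order terms: $w$ is proper in a COQ exactly when there is an associated linear order under which $w$ is minimal and every vertex of $\Out(w)$ precedes every vertex of $\In(w)$ (with non-neighbors of $w$ allowed anywhere); equivalently, reading the cyclic ordering clockwise from $w$ one meets all of $\Out(w)$ before any vertex of $\In(w)$.

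Next I would analyze $\Lgraph_w(\Qtmut)$ near $w$. Suppose $w$ is red; the green case is analogous with the roles of $\In$, $\Out$ and of the two colors interchanged. Step~2 of Definition~\ref{def:tildeQ} reverses precisely the arrows from a red vertex to a green vertex, so with $w$ red no arrow between $\In_{\Qtmut}(w)$ and $w$ is reversed, whereas every arrow $w\to z$ with $z$ green is reversed to $z\to w$; and the arrows added in Step~1 join two neighbors of $w$ without touching $w$. Consequently, in \emph{every} linear extension of $\Lgraph_w(\Qtmut)$ the set $\In_{\Qtmut}(w)$ together with the green part of $\Out_{\Qtmut}(w)$ lies below $w$, while the red part of $\Out_{\Qtmut}(w)$ lies above $w$. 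In view of the reformulation above, $w$ will be proper once I exhibit a linear extension in which every green vertex of $\Out_{\Qtmut}(w)$ lies below every vertex of $\In_{\Qtmut}(w)$.

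The key point---and the place I expect to do the real work---is the claim that $\Lgraph_w(\Qtmut)$ has no directed path from $\In_{\Qtmut}(w)$ to a green vertex of $\Out_{\Qtmut}(w)$; the required linear extension then follows from the standard fact that in an acyclic digraph, if there is no directed path from a set $B$ to a set $A$ then some topological order places all of $A$ before all of $B$. To prove the claim, take $u\in\In_{\Qtmut}(w)$ and a green $z\in\Out_{\Qtmut}(w)$, so that $u\to w\to z$ is a $2$-path in $\Qtmut$. If $u$ is red, then $\Lgraph_w(\Qtmut)$ has no directed path from any red vertex to any green vertex at all: as in the opening of the proof of Lemma~\ref{lem:Q tilde acyclic}, Step~2 turns every red-to-green arrow into a green-to-red arrow while Step~1 adds only monochromatic arrows, so no red-to-green arrow survives. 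If instead $u$ is green, then $u$ and $z$ are both green while $w$ is red, so Step~1 of Definition~\ref{def:tildeQ} inserts an arrow $z\to u$ into $\Lgraph_w(\Qtmut)$; were there a directed path from $u$ to $z$, it would close up with this arrow into a directed cycle, contradicting Lemma~\ref{lem:Q tilde acyclic}. This is exactly the step at which the $w$-labeled arrows of Definition~\ref{def:tildeQ} are needed.

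Finally I would assemble the pieces: choosing a linear extension $<$ of $\Lgraph_w(\Qtmut)$ with all green vertices of $\Out_{\Qtmut}(w)$ below $\In_{\Qtmut}(w)$ (possible by the previous paragraph), the associated cyclic ordering $\sigma_w$ makes $w$ proper by the local analysis. Since $(\Qtmut,\sigma_w)$ lies in the wiggle equivalence class of $(\Qtmut,\sigma_v)$ and $w$ was arbitrary, that class is proper, which is the assertion of the lemma. When $w$ is green one runs the symmetric argument, arranging instead that $\Out_{\Qtmut}(w)$ lies below the red part of $\In_{\Qtmut}(w)$ and using, via the same two sub-cases, that $\Lgraph_w(\Qtmut)$ has no directed path from a red vertex of $\In_{\Qtmut}(w)$ to $\Out_{\Qtmut}(w)$.
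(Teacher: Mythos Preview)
Your proof is correct and shares the paper's overall strategy: reduce via Lemma~\ref{lem: Q tilde wiggle} to showing that $w$ is proper in $(\Qtmut,\sigma_w)$, then work with a linear extension of $\Lgraph_w(\Qtmut)$ and exploit the color structure together with the labeled arrows and Lemma~\ref{lem:Q tilde acyclic}. The organization differs: the paper checks the right-turn condition path by path via a case split on the colors of $p,q,r$, whereas you first reformulate properness of $w$ as the existence of a linear order (with $w$ minimal) placing $\Out(w)$ before $\In(w)$, and then construct a suitable linear extension through the ``no directed path from $\In_{\Qtmut}(w)$ to the green part of $\Out_{\Qtmut}(w)$'' argument. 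Your version makes the choice of linear extension explicit; the paper's terse ``a similar argument applies'' in the mixed-color cases (e.g., $p$ red with $q,r$ green, or $p,q$ red with $r$ green) tacitly relies on taking an extension in which green vertices precede red ones, which is always possible since $\Lgraph_w(\Qtmut)$ has no red-to-green arrows but is not automatic for an arbitrary linear extension.
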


\begin{proof}
It suffices to check that each vertex $q$ is proper in the COQ~$(\Qtmut, \sigma_q)$ by Lemma~\ref{lem: Q tilde wiggle}. 
Let $<$ be a linear extension of  $\Lgraph_{q}(\Qtmut)$.
We consider all oriented paths~${p \rightarrow q \rightarrow r}$ in cases, based on the color (green or red) of $p,q,$ and $r$ in $\Qtmut$.
As noted in Lemma~\ref{lem:Q tilde acyclic}, the subquiver of all red or all green vertices is acyclic.
We have chosen our order $<$ to be compatible with these acyclic subquivers.
Thus if all three of $p,q,r$ are green (resp., red) then $p \rightarrow q \rightarrow r$ is a right turn at $q$. 
A similar argument applies if $p$ or $r$ is a different color than the other two vertices.

This leaves us with the alternating patterns: red-green-red or green-red-green. 
By construction of $<$, if $p,r$ are both a different color than $q$, then there is an arrow $r \stackrel{q}{\rightarrow} p$ in  $\Lgraph_{q}(\Qtmut)$, and so $r<p$. 
Thus the path $p \rightarrow q \rightarrow r$ is again a right turn at $q$.
\end{proof}

\begin{example}
Continuing with Examples~\ref{eg:E6} and~\ref{eg:E6v3}, Figure~\ref{fig:E6COQ} shows the COQ $(\Qtmut, \sigma_{v_1})$. 
Note that every oriented path of length $2$ is a right turn.
\end{example}

\begin{figure}[ht]

{ 

\newcommand\radius{1.2cm}
\includegraphics[alt={A COQ whose quiver is as in the previous examples, and whose cyclic ordering is (v1, v4, v3, v2, v5, v6). Every vertex is proper, for example the oriented 2-path v2 to v6 to v1 is a right turn as v2 v6 v1 appear in clockwise order.}, scale=1.2]{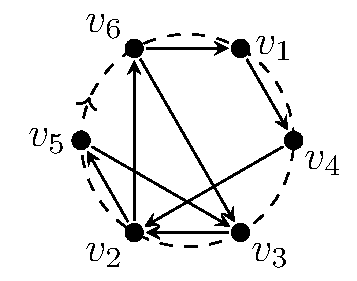}
} 
\caption{The proper COQ $(\Qtmut, \sigma_{v_1})$.}
\label{fig:E6COQ}
\end{figure}

\begin{proof}[Proof of Theorem~\ref{thm:acyclic totally proper}]
Let $\sigma_t = \sigma_{v_1}$ be a cyclic ordering constructed from $\Qtmut$ as in Definition~\ref{def:order from tilde}.
By Lemma~\ref{lem:U unipotent}, $U_t$ is a unipotent companion of the COQ $(\Qtmut, \sigma_t)$.
By Lemma~\ref{lem:U gives proper}, $(\Qtmut,\sigma_t)$ is proper.
It remains to show that these COQs (as $t \in \mathbb T_n$ varies) are all proper mutation equivalent.

Fix an arbitrary $t' \stackrel{i}{\text{---}} t$. 
It suffices to show that the COQ $(\mu_{v_i}(\Qtmut), \sigma_{t'})$ is in the wiggle equivalence class $\mu_{v_i}((\Qtmut, \sigma_t))$. Let $<$ be a linear extension of $\Lgraph_{v_i}(\Qtmut)$.
Say that $v_i$ is green (resp., red) in $\Qtmut$.
For every vertex $w \in \Out(v_i)$ (resp., $\In(v_i)$), we have $v_i < w$ (resp. $w < v_i$).
By Proposition~\ref{prop:A matrix mutation acyclic}, we have
$$B_{t'} = M_{Q_t}(v_i, \varepsilon) B_t M_{Q_t}(v_i, \varepsilon)^T,$$
$$A_{t'} = M_{Q_t}(v_i, \varepsilon) A_t M_{Q_t}(v_i, \varepsilon)^T$$ 
for $\varepsilon = 1$ (resp., $-1$).
Thus 
\begin{align*}
U_{t'} &=  \frac 1 2 \big ( M_{Q_t}(v_i, \varepsilon) A_t M_{Q_t}(v_i, \varepsilon)^T - M_{Q_t}(v_i, \varepsilon) B_t M_{Q_t}(v_i, \varepsilon)^T \big ) \\
&= M_{Q_t}(v_i, \varepsilon) U_t M_{Q_t}(v_i, \varepsilon)^T.
\end{align*}

In particular, $U_{t'}$ is exactly the unipotent companion of $\mu_{v_i}((\Qtmut, \sigma_t))$ produced by Lemma~\ref{lem:mutates like B v2gen} (resp. \ref{lem:mutates like B v1gen}).
The theorem then follows from Lemma~\ref{lem:wiggle equiv if both upper triangular}.
\end{proof}

\newpage 
\section{Applications}
\label{sec:cor}

As noted in Theorem~\ref{thm:payoff}, the integral congruence class of a unipotent companion is a proper mutation invariant of COQs. 
If a COQ is totally proper, then every mutation is proper and so we have a mutation invariant of the underlying quiver. 

\begin{theorem}[{\cite[Theorem~14.3, Algorithm~14.12]{COQ}}]
\label{thm:uniq}
A quiver $Q$ has at most one totally proper cyclic ordering (up to wiggle equivalence).
There is an efficient algorithm that computes a cyclic ordering $\sigma_Q$ such that if $Q$ has a totally proper cyclic ordering, then $(Q, \sigma_Q)$ is totally proper.
\end{theorem}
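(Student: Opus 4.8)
\emph{Uniqueness.} The plan is to derive both assertions from the winding-number criterion for wiggle equivalence (Theorem~\ref{th:wiggle/winding}). By that theorem, two cyclic orderings of $Q$ are wiggle equivalent exactly when they assign equal winding numbers to every cycle of $K_Q$; and since $\mathcal{O}\mapsto\wind_\sigma(\mathcal{O})$ is additive under concatenation and sign-reversing under reversal of cycles (transparent from the topological reading in Remark~\ref{rem:winding number topology}, where it is the degree of a loop), it factors through a homomorphism $H_1(K_Q,\ZZ)\to\ZZ$. As every cycle is an integer combination of chordless cycles -- repeatedly split a cycle having a chord into two shorter cycles sharing that chord, which cancels in the sum, and induct on length -- it suffices to show that in a totally proper COQ $(Q,\sigma)$ the value $\wind_\sigma(\mathcal{O})$ is forced for every chordless cycle $\mathcal{O}$; precisely, I would prove it equals $+1$ if $\mathcal{O}$ is forward-oriented, $-1$ if backward-oriented, and $0$ otherwise.

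\emph{The chordless-cycle computation.} This I would do by induction on the length $\ell$ of $\mathcal{O}=(w_0-\cdots-w_\ell=w_0)$. The base case $\ell=3$ uses only properness: a triangle is chordless with pairwise-adjacent vertices, so no wiggle alters their relative cyclic order, and properness of each vertex pins that cyclic order down -- to an oriented triangle when $\mathcal{O}$ is oriented (giving $\pm1$) and to the arrangement making its unique oriented $2$-path a right turn otherwise (giving $0$). For $\ell\ge 4$, call $w_i$ a \emph{through vertex} of $\mathcal{O}$ if $w_{i-1}\to w_i\to w_{i+1}$ or $w_{i-1}\leftarrow w_i\leftarrow w_{i+1}$. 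If one exists, the proper mutation $\mu_{w_i}$ is defined (by total properness), and in a representative $(\mu_{w_i}(Q),\sigma')$ of $\mu_{w_i}((Q,\sigma))$ it turns $\mathcal{O}$ into the cycle $\mathcal{O}'$ obtained by deleting $w_i$ and inserting the edge $w_{i-1}-w_{i+1}$: still chordless (because $\mathcal{O}$ was, and $w_{i-1},w_{i+1}$ are non-adjacent in $Q$, so no $2$-cycle is created), of length $\ell-1$ and of the same type. Since the mutation moves only $w_i$ in the cyclic ordering, and a tear can be chosen with $w_{i-1},w_i,w_{i+1}$ in increasing order -- so that the arrows $w_{i-1}\to w_i\to w_{i+1}$ contribute to \eqref{eq:wind formula} for $\wind_\sigma(\mathcal{O})$ exactly what $w_{i-1}\to w_{i+1}$ contributes for $\wind_{\sigma'}(\mathcal{O}')$, namely $0$ -- one gets $\wind_\sigma(\mathcal{O})=\wind_{\sigma'}(\mathcal{O}')$, and the inductive hypothesis applies since $(\mu_{w_i}(Q),\sigma')$ is again totally proper. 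Every oriented chordless cycle of length $\ge4$ has a through vertex, so that case is finished; a non-oriented cycle either has a through vertex ($\mathcal{O}'$ stays non-oriented) or is strictly alternating, in which case I would first mutate at a local source of $\mathcal{O}$ -- this adds no chord, as no oriented $2$-path of $\mathcal{O}$ passes through it -- turning its two cycle-neighbours into through vertices, and then proceed as above. \emph{The step I expect to be the real work is exactly this transport of $\wind$ across a proper mutation, together with the chordlessness/type bookkeeping} (Lemma~\ref{lem:cycles restrict to chordless} helps with the latter); it belongs to the same circle of ideas as Theorem~\ref{thm:payoff}.

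\emph{The algorithm and existence.} Granting the computation, if $Q$ admits a totally proper ordering then its winding number on every chordless cycle, hence on every cycle, is determined, so by Theorem~\ref{th:wiggle/winding} its wiggle class is determined. To construct $\sigma_Q$: fix a spanning forest of $K_Q$, and for each of the (polynomially many) remaining edges compute the mandated winding number of the corresponding fundamental cycle by decomposing it into chordless pieces along its chords and summing the prescribed values -- one never enumerates all chordless cycles; then build a cyclic ordering realizing these numbers by inserting the vertices one at a time at positions consistent with the already-built partial order, and if at some point no consistent position exists, return an arbitrary cyclic ordering. All steps are polynomial in the size of $Q$. Finally, if $Q$ really has a totally proper ordering $\tau$, then $\tau$ realizes precisely the mandated winding numbers, so the construction succeeds and its output lies in the wiggle class of $\tau$; hence $(Q,\sigma_Q)$ is totally proper. (When $Q$ is acyclic there are no oriented chordless cycles, so every cycle is mandated to wind $0$, and the recipe reduces to ``close up any linear order compatible with the arrows,'' recovering Remark~\ref{rem:the order}.)
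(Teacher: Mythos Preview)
This theorem is not proved in the present paper: it is quoted from \cite{COQ} (their Theorem~11.3 and Remark~11.11), so there is no in-paper argument to compare against. Your strategy---pin down $\wind_\sigma$ on chordless cycles via Theorem~\ref{th:wiggle/winding} and transport winding numbers through proper mutations to reduce the length---is the natural one and matches the circle of ideas in \cite{COQ}; the base case and the through-vertex reduction are correctly set up.

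The place that is not yet justified is the strictly alternating case. When you ``first mutate at a local source of~$\mathcal{O}$ \ldots\ and then proceed as above,'' the inductive step you invoke computes $\wind_{\sigma'}(\mathcal{O})$ in the \emph{mutated} COQ; you never argue that $\wind_\sigma(\mathcal{O})=\wind_{\sigma'}(\mathcal{O})$ under that mutation. Your transport argument for through vertices used that properness forces $w_{i-1}<w_i<w_{i+1}$ in some tear, so the two edge contributions at $w_i$ vanish and match the single new edge. For a local source $w_0$ of~$\mathcal{O}$ that happens to be a global source of~$Q$, properness imposes no constraint on the position of~$w_0$, so that shortcut is unavailable; one must track how the two edge contributions at~$w_0$ change as the arrows flip \emph{and} as $w_0$ slides clockwise past all of $\Out_Q(w_0)$. (A direct check on the alternating $4$-cycle shows the equality does hold---the two effects compensate---but this is exactly the ``real work'' you flag and do not carry out.) You should either prove a lemma that proper mutation at any proper vertex preserves the winding number of a chordless cycle through it, or give a separate argument for alternating cycles that avoids routing through a source mutation.
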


If a quiver $Q$ is mutation equivalent to a totally proper quiver $Q'$, then the integral congruence class of a unipotent companion of the COQ $(Q, \sigma_Q)$ will agree with that of $(Q', \sigma_{Q'})$. 
In particular, their Alexander polynomials and Markov invariants (Definition~\ref{def:alexander and markov}) will agree.

By Theorem~\ref{thm:acyclic totally proper}, we now have many new totally proper quivers. 
Let us discuss a few examples.

\begin{corollary}
\label{cor:acyclic cycles}
Suppose $Q$ is an acyclic quiver whose underlying unoriented simple graph is a chordless cycle $\mathcal O$.
Then a cyclic ordering $\sigma$ of the vertices of $Q$ is totally proper if and only if $\wind_\sigma(\mathcal O) = 0$.
\end{corollary}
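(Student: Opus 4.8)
The plan is to reduce the statement to two tools already in hand: the uniqueness of the totally proper cyclic ordering (Theorem~\ref{thm:uniq}) and the winding-number criterion for wiggle equivalence (Theorem~\ref{th:wiggle/winding}). By Theorem~\ref{thm:acyclic totally proper} together with Remark~\ref{rem:the order}, the cyclic ordering $\sigma_0$ obtained by closing up any linear order $<$ compatible with the arrows of $Q$ (that is, with $u < v$ whenever $u \to v$) is totally proper; and by Theorem~\ref{thm:uniq} it is, up to wiggles, the \emph{only} totally proper cyclic ordering of $Q$. Since ``totally proper'' is a property of a wiggle equivalence class, it therefore suffices to prove that a cyclic ordering $\sigma$ satisfies $\wind_\sigma(\mathcal O) = 0$ if and only if $(Q,\sigma)$ is wiggle equivalent to $(Q,\sigma_0)$.

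First I would compute $\wind_{\sigma_0}(\mathcal O) = 0$ directly from~\eqref{eq:wind formula}: writing $\mathcal O = (w_0 - \cdots - w_\ell = w_0)$, a traversal step $w_i \to w_{i+1}$ along an arrow forces $w_i < w_{i+1}$, while a step against an arrow forces $w_i > w_{i+1}$, so both index sets in~\eqref{eq:wind formula} are empty. (The direction in which $\mathcal O$ is traversed is immaterial, since reversing it negates the winding number, which is $0$ either way.) Thus the essentially unique totally proper cyclic ordering has winding number $0$ on $\mathcal O$.

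Next I would argue that, because $K_Q = \mathcal O$ is a single chordless cycle, the winding number of \emph{every} undirected cycle in $K_Q$ is determined by $\wind_\sigma(\mathcal O)$. The key point is that the summand contributed to~\eqref{eq:wind formula} by a traversal step depends only on the edge used and the direction in which it is traversed; hence $\wind_\sigma$ is additive under concatenation of cycles, changes sign under reversal, and vanishes on backtracks, so it factors through $H_1(K_Q,\ZZ) \cong \ZZ$ and is determined by its value on the generator $[\mathcal O]$ (cf.\ Remark~\ref{rem:winding number topology}). Consequently two cyclic orderings have equal winding numbers on all undirected cycles of $K_Q$ exactly when they agree on $\mathcal O$, and by Theorem~\ref{th:wiggle/winding} this is precisely the condition for the two COQs to be wiggle equivalent.

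Assembling the pieces: $(Q,\sigma)$ is totally proper $\iff$ (Theorem~\ref{thm:uniq}, using that $\sigma_0$ is totally proper) $(Q,\sigma)$ is wiggle equivalent to $(Q,\sigma_0)$ $\iff$ (Theorem~\ref{th:wiggle/winding} and the previous paragraph) $\wind_\sigma(\mathcal O) = \wind_{\sigma_0}(\mathcal O) = 0$. I expect the only step requiring genuine care to be the reduction from ``all undirected cycles'' in Theorem~\ref{th:wiggle/winding} to the single generator $\mathcal O$, i.e.\ checking that $\wind_\sigma$ descends to $H_1(K_Q,\ZZ)$ when $K_Q$ is a cycle; everything else is either a direct appeal to the quoted results or the one-line evaluation of $\wind_{\sigma_0}(\mathcal O)$.
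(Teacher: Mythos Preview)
Your proposal is correct and matches the paper's implicit argument: the paper states this result as a corollary without proof, relying exactly on the ingredients you invoke (Theorem~\ref{thm:acyclic totally proper} with Remark~\ref{rem:the order}, uniqueness from Theorem~\ref{thm:uniq}, and the winding-number criterion Theorem~\ref{th:wiggle/winding}). Your only substantive elaboration---checking that $\wind_\sigma$ on a single-cycle graph $K_Q$ is determined by its value on the generator $\mathcal O$---is the natural verification the reader is expected to supply.
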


Note that Corollary~\ref{cor:acyclic cycles} makes no assumption about the multiplicities of the arrows.

\begin{example}
\label{eg:acyclic cycles}
As an illustration, we compute some mutation invariants for a few families.
Let $\tilde A(r, \ell)$ denote a quiver with $n = r + \ell$ vertices $v_1, \ldots, v_n$ which has $r$ locations with a single arrow $v_i \rightarrow v_{i+1}$ and $\ell$ locations with a single arrow $v_i \leftarrow v_{i+1}$ (see \cite[Example~6.9]{cats1}).
While there are many ways to place the arrows, all are related by sink and/or source mutations.
Thus we may assume that $\tilde A(r, \ell)$ has arrows
$$v_1 \rightarrow v_2 \rightarrow \cdots \rightarrow v_r \rightarrow v_{r+1} \leftarrow \cdots \leftarrow v_{n} \leftarrow v_1.$$
For $r, \ell > 0$, the quiver $\tilde A(r, \ell)$ is acyclic, and thus totally proper.
One totally proper cyclic ordering is $\sigma_{r\ell} = (v_1, \ldots, v_{r-1}, v_n, v_{n-1}, \ldots, v_{r+1}, v_r)$.

The Alexander polynomial of~$(\tilde A(r, \ell), \sigma_{r\ell})$ is 
$$\Delta_{\tilde A(r, \ell)}(t)= (t^\ell - (-1)^{\ell}) (t^r - (-1)^r).$$
Every quiver mutation equivalent to $(\tilde A(r, \ell), \sigma_{r \ell})$ has integrally congruent unipotent companions, and in particular a matching Alexander polynomial.

Thus, if $r=3$ and $\ell=2$ then one unipotent companion is
$$U = \begin{pmatrix}
1 & -1 & 0 & -1 & 0 \\
0 & 1 & -1 & 0 & 0 \\
0 & 0 & 1 & 0 & -1 \\
0 & 0 & 0 & 1 & -1 \\
0 & 0 & 0 & 0 & 1
\end{pmatrix},$$
and we can compute the Alexander polynomial $$\Delta_{\tilde A(3,2)}(t) = \det(t U - U^T)=t^5 - t^3 + t^2 - 1.$$
\hide{
We compute unipotent companions and their associated Alexander polynomials:

\begin{itemize}
\item  $(\tilde A(3, 1), \sigma_{3,1})$: One unipotent companion is
$$U = \begin{pmatrix}
1 & -1 & 0 & -1 \\
0 & 1 & -1 & 0 \\
0 & 0 & 1 & -1 \\
0 & 0 & 0 & 1 
\end{pmatrix},$$
the associated Alexander polynomial is $\Delta_{\tilde A(3,1)}(t) =\det(t U - U^T) = t^4 + t^3 + t + 1$.
\item  $(\tilde A(2, 2), \sigma_{2,2})$: One unipotent companion is
$$\begin{pmatrix}
1 & -1 & -1 & 0 \\
0 & 1 & 0 & -1 \\
0 & 0 & 1 & -1 \\
0 & 0 & 0 & 1 
\end{pmatrix},$$
the associated Alexander polynomial is $\Delta_{\tilde A(2,2)}(t) =t^4 - 2 t^2 + 1$. 
\item  $(\tilde A(4, 1), \sigma_{4,1})$: One unipotent companion is
$$\begin{pmatrix}
1 & -1 & 0 & 0 & -1 \\
0 & 1 & -1 & 0 & 0 \\
0 & 0 & 1 & -1 & 0 \\
0 & 0 & 0 & 1 & -1 \\
0 & 0 & 0 & 0 & 1
\end{pmatrix},$$
the associated Alexander polynomial is $\Delta_{\tilde A(4,1)}(t) =t^5 + t^4 - t - 1$.
\item  $(\tilde A(3, 2), \sigma_{3,2})$: One unipotent companion is
$$\begin{pmatrix}
1 & -1 & 0 & -1 & 0 \\
0 & 1 & -1 & 0 & 0 \\
0 & 0 & 1 & 0 & -1 \\
0 & 0 & 0 & 1 & -1 \\
0 & 0 & 0 & 0 & 1
\end{pmatrix},$$
the associated Alexander polynomial is $\Delta_{\tilde A(3,2)}(t) =t^5 - t^3 + t^2 - 1$.
\end{itemize}
}
\end{example}

\begin{example}
\label{eg:cycles with multiplicities}
We can expand Example~\ref{eg:acyclic cycles} by considering quivers whose simple undirected graphs are cycles but have multiple arrows between vertices.
Choose nonnegative integers $r, \ell$ and positive integers $a_1, \ldots, a_{r+\ell}$.
Let $C(r, \ell)$ denote the quiver with $n = r + \ell$ vertices $v_i$, and arrows
$$v_1 \stackrel{a_1}\rightarrow v_2 \stackrel{a_2}\rightarrow \cdots \stackrel{a_{r-1}}\rightarrow v_r \stackrel{a_r}\rightarrow v_{r+1} \stackrel{a_{r+1}}\leftarrow \cdots \stackrel{a_{n-1}}\leftarrow v_{n} \stackrel{a_n}\leftarrow v_1.$$
For $r,\ell > 0$, the cyclic ordering $\sigma_{r \ell}$ is totally proper.

We compute the Alexander polynomials for several small values of $r, \ell$. 
Recall that for a $4$-vertex quiver $Q$ we can write the Alexander polynomial as 
$$\Delta_Q(t) = (t-1)^4 + M_Q \cdot t(t-1)^2 + \det(B_Q) \cdot t^2.$$
In particular,
\begin{align*}
\Delta_{C(4,0)}(t) =& (t-1)^4 + (a_1^2 + a_2^2 + a_3^2 + a_4^2-a_1a_2a_3a_4) t(t-1)^2 \\ 
& + ( a_1^2a_3^2 + a_2^2a_4^2 - 2 a_1 a_2 a_3 a_4) t^2; \\
\Delta_{C(3,1)}(t) =& (t-1)^4 + ( a_1^2 + a_2^2 + a_3^2 + a_4^2 + a_1a_2a_3a_4 ) t (t-1)^2 \\ 
& + ( a_1^2a_3^2 + a_2^2a_4^2 + 2 a_1 a_2 a_3 a_4 ) t^2; \\
\Delta_{C(2,2)}(t) =& (t-1)^4 + ( a_1^2 + a_2^2 + a_3^2 + a_4^2 ) t(t-1)^2 \\ 
& + ( a_1^2a_3^2 + a_2^2a_4^2 - 2a_1a_2a_3a_4 ) t^2.
\end{align*}

\noindent
For a $5$-vertex quiver $Q$, we can write the Alexander polynomial as
$$\Delta_Q(t) = (t-1)^5 + M_Q \cdot t(t-1)^3 + d \cdot t^2 (t-1)$$
for some integer coefficient $d$. In particular,
\begin{align*}
\Delta_{C(4,1)}(t) =& (t-1)^5 + (a_1^2 + a_2^2 + a_3^2 + a_4^2 + a_5^2 + a_1 a_2 a_3 a_4 a_5) t (t-1)^3 \\ 
& + ( a_1^2 a_3^2 + a_1^2 a_4^2 + a_2^2 a_4^2 + a_2^2 a_5^2 + a_3^2 a_5^2 - 3 a_1a_2a_3a_4a_5) t^2(t - 1); \\
\Delta_{C(3,2)}(t) =& (t-1)^5 + ( a_1^2 + a_2^2 + a_3^2 + a_4^2 + a_5^2) t (t-1)^3 \\ 
& + (a_1^2a_3^2 + a_1^2a_4^2 + a_2^2a_4^2 + a_2^2a_5^2 + a_3^2a_5^2 - a_1a_2a_3a_4a_5)  t^2(t - 1).
\end{align*}

These examples suggest a pattern for the Markov invariant, agreeing with Proposition~\ref{prop:markov positive}. Without loss of generality, assume $r > 0$. Then
\begin{equation*}
M_{C(r, \ell)} = 
\begin{cases} 
\sum_i^n a_i^2 & \text{if } \ell \geq 2; \\ 
\sum_i^n a_i^2 + \prod_i^n a_i  & \text{if } \ell =1; \\ 
\sum_i^n a_i^2 - \prod_i^n a_i & \text{if } \ell =0.
\end{cases}
\end{equation*}
\end{example}

\begin{remark}
\label{rem:impossible values}
Recall Lagrange's four squares theorem, which states that every nonnegative integer is the sum of at most four squares (of integers).
Thus there is a totally proper $4$-vertex COQ with Markov invariant $x$ for every $x \geq 0$.
However some integers are not the sum of exactly four \emph{positive} squares. 
For example, $2^{2k+1}$, or $29$ (see \cite{A000534}).
Thus any quiver whose associated Markov invariant is one of these values must not be mutation equivalent to $C(2, 2)$ (for any positive values $a_i$).
\end{remark}

\hide{ 
\begin{figure}[h]

{
\newcommand\radii{1cm}
\includegraphics[alt={A 4 vertex quiver with vertices v1, v2, v3, v4. The only arrows in the quiver lie in the oriented three-cycles. There are 4 arrows from v1 to v2, 2 arrows from v2 to v4, and 22 arrows from v4 to v1. There are 5 arrows from v1 to v3, and 3 arrows from v3 to v4. With the 22 arrows from v4 to v1, this makes the second oriented three-cycle.}]{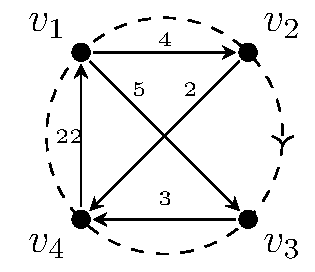}
}

\caption{Four vertex COQs which are not mutation equivalent to any $C(2,2)$.}
\label{fig:not mu cycles}

\end{figure}
}

We have the following corollaries of Proposition~\ref{prop:markov positive}.

\begin{corollary}
\label{cor:acyclic positive}
If $Q$ is a totally proper mutation-acyclic COQ on~$n$ vertices then $M_Q \geq 0$.
If, additionally, $Q$ is connected 
then the Markov invariant $M_Q \geq n-1$. 
\end{corollary}

\begin{corollary}
\label{cor:markov finite}
For a fixed integer $k$, there are only finitely many acyclic quivers $Q$ with $M_Q=k$ (and hence only finitely many acyclic quivers with the same \emph{Alexander polynomial}). 
In particular, if $M_Q = n-1$ and $Q$ is connected then $Q$ is mutation equivalent to a quiver whose underlying undirected graph is a tree (with no parallel arrows).
\end{corollary}

Note that $M_Q$ is monotone in all of the entries of $B_Q$ for an acyclic quiver $Q$. 
Therefore we can enumerate the acyclic quivers with $M_Q = k$ by searching all acyclic quivers with weights at most, say, $\sqrt{k}$ for matching Markov invariants.

\begin{corollary}
\label{cor:acyclic det inequality}
Let $Q$ be a totally proper and mutation-acyclic COQ on $n$ vertices. 
Then the Markov invariant and the determinant of the exchange matrix satisfy the following inequality:
$$\det(B_Q) \leq (2 M_Q)^{n/2}.$$
\end{corollary}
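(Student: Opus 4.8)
The plan is to reduce to the case of an acyclic quiver and then combine Proposition~\ref{prop:markov positive} with a determinant estimate. First I would check that neither side of the claimed inequality changes when $Q$ is replaced by a mutation-equivalent acyclic quiver $Q'$. For $\det(B_Q)$ this is immediate: by~\eqref{eq:Bcong} mutation transforms the exchange matrix by a congruence with an integer matrix of determinant $\pm1$ (see Definition~\ref{def:mutation mat}), so $\det(B_Q)$ is a mutation invariant. For $M_Q$, since $Q$ is totally proper, every step of a mutation sequence from $Q$ to an acyclic $Q'$ is a proper mutation of COQs, so by Theorem~\ref{thm:payoff} the integral congruence class of the unipotent companion---and hence $M_Q$---is unchanged; thus $M_Q=M_{Q'}$. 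So we may assume $Q$ is acyclic, equipped with a linear order $<$ of its vertices compatible with the arrows; by Remark~\ref{rem:the order} the totally proper cyclic ordering $\sigma_Q$ is the closing-up of $<$, and the relevant unipotent companion is the one of Definition~\ref{def:unipotent companion}.

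Next I would apply Proposition~\ref{prop:markov positive} to this linearly ordered quiver. Writing $B_Q=(b_{v_iv_j})$, it gives $M_Q=\sum_{v_i<v_j}b_{v_iv_j}^2+\sum_{\mathcal O}wt(\mathcal O)$ with every $wt(\mathcal O)\geq0$ (each is either $0$ or a product of positive integers), so $\sum_{v_i<v_j}b_{v_iv_j}^2\leq M_Q$. Since $B_Q$ is skew-symmetric with zero diagonal, $\tr(B_Q^{T}B_Q)=\sum_{i,j}b_{v_iv_j}^2=2\sum_{v_i<v_j}b_{v_iv_j}^2\leq 2M_Q$.

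Finally I would feed this into the arithmetic--geometric mean inequality applied to the (nonnegative) eigenvalues $\mu_1,\dots,\mu_n$ of the positive semidefinite matrix $B_Q^{T}B_Q$: this yields $\det(B_Q)^2=\prod_i\mu_i\leq\big(\tfrac1n\sum_i\mu_i\big)^{n}=\big(\tfrac1n\tr(B_Q^{T}B_Q)\big)^{n}\leq\big(\tfrac{2M_Q}{n}\big)^{n}\leq(2M_Q)^{n}$, where the last step uses $n\geq1$ and $M_Q\geq0$. Taking square roots---and recalling that $\det(B_Q)=0$ when $n$ is odd and $\det(B_Q)=\Pf(B_Q)^2\geq0$ when $n$ is even---gives $\det(B_Q)\leq(2M_Q)^{n/2}$.

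There is no serious obstacle; the only points that need care are checking that $M_Q$ really is preserved by the reduction to an acyclic quiver (this is exactly where total properness enters, via Theorem~\ref{thm:payoff}) and the routine sign bookkeeping in the final line.
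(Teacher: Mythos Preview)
Your proof is correct and follows the same overall arc as the paper: reduce to an acyclic quiver, invoke Proposition~\ref{prop:markov positive} to bound the Frobenius norm $\sum_{i,j}b_{v_iv_j}^2\le 2M_Q$, and then control the determinant by a norm inequality. You are more explicit than the paper about why the reduction is legitimate (the paper simply asserts ``it suffices to consider acyclic COQs''), and your justification via~\eqref{eq:Bcong} and Theorem~\ref{thm:payoff} is exactly right.

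The one genuine difference is the final step. The paper bounds $\det(B_Q)$ by passing through the spectral norm: since $B_Q$ is skew-symmetric its eigenvalues are purely imaginary, so with $\lambda=\rho(B_Q)$ one has $\det(B_Q)\le\lambda^n\le\|B_Q\|_F^{\,n}\le(2M_Q)^{n/2}$. You instead apply AM--GM to the eigenvalues of the positive semidefinite matrix $B_Q^{T}B_Q$, obtaining $\det(B_Q)^2\le\bigl(\tfrac{1}{n}\tr(B_Q^{T}B_Q)\bigr)^{n}$. Your route is arguably more elementary and in fact yields the sharper intermediate bound $\det(B_Q)\le(2M_Q/n)^{n/2}$ before you discard the factor $n^{-n/2}$; the paper's spectral-norm argument does not see this extra factor. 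Both approaches rely on the same positivity input $M_Q\ge\sum_{i<j}b_{v_iv_j}^2$ from Proposition~\ref{prop:markov positive}, so the distinction is purely in the linear-algebra endgame.
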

\hide{
aspire to remove the $2$ next to $M_Q$.
\begin{proof}
It suffices to consider acyclic quivers $Q$ with vertices $v_1, \ldots, v_n$.
As $\det(B_Q) = 0$ if $n$ is odd, we assume $n$ is even.
By Proposition~\ref{prop:markov positive}, $\sum_{ij} b_{v_i v_j}^2 \leq M_Q.$
Recall that the determinant is the square of the \emph{pfaffian}, which can be expressed as a sum over the partitions $\pi$ of $\{1, \ldots, n\}$ into sets of size $2$, we call the set of all these partitions $M_n$. \cS{standard notation?}
For convenience, let $\pi(i)$ denote the unique element which is paired with $i$ (note that $\sgn(\pi)$ is \emph{not} the sign of this map as a permutation).
We have
\begin{align*}
\det(B_Q) &= \big ( \sum_{\pi \in M_n} \sgn(\pi)  \prod_{\{a,b\} \in \pi} b_{v_a v_b} \big )^2 \leq \big ( \sum_{\pi \in M_n} \prod_{\{a,b\} \in \pi} |b_{v_a v_b}| \big)^2 \\ 
&= \sum_{\pi, \alpha \in M_n} \prod_{\{a,b\} \in \pi} |b_{v_a v_b}| \prod_{\{a,b\} \in \alpha} |b_{v_a v_b}|.
\end{align*}
We can view a pair of partitions $\pi, \alpha \in M_n$ as a multigraph graph with vertices $1,\ldots, n$ (and each pair determining an edge). 
This graph is bipartite, $2$-regular, and thus decomposes into cycles of even length.
For a given pair of partitions, their summand is strictly less than 
$$ .$$

\cS{something weird above; I get $n!$ summands in the middle, but only around $(n)^{(n/2)}$ for the last (at least for the subset I'm looking at). So we can't have a termwise upper bound.
On the other hand, fix a summand in the first line. It can appear in at most $(n/2)!$ ways (picking the vars that don't appear). This is exactly the number of ways it appears in the last summand (picking the order the terms are gathered). I need to remember that I don't get both $b_{ij}$ and $b_{ji}$ in the sum for $M_Q$... Might be easier via pf and machings? Well, or I could add that factor of $2$. Probably wouldn't hurt much.}
\end{proof}
}

\begin{proof}
It suffices to consider acyclic COQs $Q$ with vertices $v_1, \ldots, v_n$.
By Proposition~\ref{prop:markov positive}, $\sum_{i<j} b_{v_i v_j}^2 \leq M_Q.$
Recall the \emph{Frobenius norm} $||B_Q||_F = \big (\sum_{i,j} b_{v_i v_j}^2\big )^{1/2}$ of~$B_Q$ (see also the Hilbert-Schmidt norm or Schur norm), thus
$||B_Q||_F^2 \leq 2 M_Q.$

As $\det(B_Q) = 0$ if $n$ is odd, we assume $n$ is even.
As $B_Q$ is skew-symmetric, its eigenvalues are all imaginary and come in signed pairs, say they have norms~$|\lambda_1|, \ldots, |\lambda_{n/2}|$.
Let $\lambda = \max( |\lambda_j|)$. 
Then clearly $\det(B_Q) \leq \lambda^n$.
As $B_Q$ is skew-symmetric the spectral norm $\rho(B_Q) = \lambda$.
It is a classical fact that the spectral norm is a lower bound on the Frobenius norm: $\lambda \leq ||B_Q||_F$. 
Therefore we have
\begin{equation*}
\det(B_Q) \leq \lambda^n \leq ||B_Q||_F^{n} \leq (2 M_Q)^{n/2}.\qedhere
\end{equation*}
\end{proof}

Corollaries~\ref{cor:acyclic positive},~\ref{cor:markov finite} and~\ref{cor:acyclic det inequality} give short proofs that certain quivers are not mutation-acyclic.

\begin{example}
\label{eg:somos4}
The \emph{Somos sequences} are integer sequences, the first $5$ of which are associated to cluster algebras with particularly symmetric quivers.
The Somos-$4$ quiver $S$ is shown in Figure~\ref{fig:somos4}.
The distinguished cyclic ordering $\sigma_S = (v_1, v_4, v_2, v_3)$ (see \cite{COQ}[Remark 11.11]) is the only potentially totally proper ordering for $S$.
This quiver is not mutation-acyclic though, which is quickly determined by computing the Markov invariant of the COQ $(S, \sigma_S)$ and applying Corollary~\ref{cor:acyclic positive}:
$$ M_S = 1^2 + 1^2 + 2^2 + 3^2 + 2^2 + 1^2 - 6 - 6 + 2 + 2 - 12 = 0.$$
\end{example}

\begin{figure}[ht]

\includegraphics[alt={A four vertex quiver with vertices v1, v2, v3, v4 and arrows as follows: One arrow from v1 to v2, v3 to v4, and v1 to v4; two arrows from v3 to v1 and v4 to v2; three arrows from v2 to v3. Mutation at v1 gives an isomorphic quiver.}]{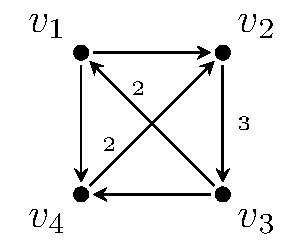}

\caption{The Somos-$4$ quiver $S$.}
\label{fig:somos4}
\end{figure}

\begin{remark}
Fix an integer $a \geq 1$. 
Example~\ref{eg:somos4} generalizes to the $4$-vertex quiver $aS$, which has the same vertices as $S$, and $a$ arrows for every arrow in $S$ (thus, for example, there are $3a$ arrows from $v_2$ to $v_3$).
Each has a nonpositive Markov invariant.
\end{remark}

\begin{example}
\label{eg:4cycle no acyclics}
Continuing with Example~\ref{eg:cycles with multiplicities}, consider the COQ $C(4,0)$ with cyclic ordering $(v_1, v_2, v_3, v_4)$, shown below. 
{
\newcommand\cyclicVerts[5]{%
	\filldraw[black] (#4,#5)++(135:0.8cm) circle (2pt) node[above left=-1pt] {$v_1$} coordinate (v_1);
	\filldraw[black] (#4,#5)++(45:0.8cm) circle (2pt) node[above right=-1pt] {$#1$} coordinate (#1);
	\filldraw[black] (#4,#5)++(-45:0.8cm) circle (2pt) node[below right=-1pt] {$#2$} coordinate (#2);
	\filldraw[black] (#4,#5)++(-135:0.8cm) circle (2pt) node[below left=-1pt] {$#3$} coordinate (#3);
	\draw[black, dashed, decoration={markings, mark=at position 0 with {\arrow{<}}}, postaction={decorate}] (#4,#5) circle (0.8cm);
}
$$\includegraphics[alt={A COQ with cyclic ordering (v1,v2,v3,v4) and a(i) arrows from v(i) to v(i+1) (indicies mod 4).}]{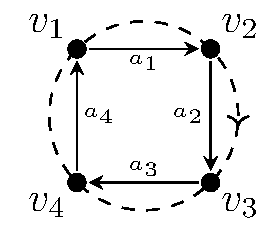}$$}

Some such quivers are mutation-acyclic, for example when $a_i=1$ for all $i$ then $C(4,0)$ has type $D_4$.
Others are not. 
Warkentin showed in \cite[Example 6.11]{Warkentin} that if $a_1=a_3\geq 2$ and $a_2=a_4\geq 2$ then this quiver is not mutation-acyclic (by a precise description of the mutation class).
The same can be observed by Corollary~\ref{cor:acyclic positive}, noting that~$M_{C(4,0)} \leq 0$ for such quivers.

Suppose that $a_4 = a_1 a_2 a_3$. Then $M_{C(4,0)} = a_1^2 + a_2^2 + a_3^2 \geq 3.$
Thus we cannot determine whether the quiver is mutation-acylic or not from Corollary~\ref{cor:acyclic positive}.
However, $$\det(B_{C(4,0)}) = (a_1 a_3 + a_1 a_2^2 a_3)^2 = (a_2^2 + 1)^2 (a_1 a_3)^2.$$
So we have $\det(B_{C(4,0)}) > ( 2 M_{C(4,0)})^2$ whenever (for example) $a_2 \geq \max(a_1, a_3)$ and $a_1 a_3 \geq 6$. 
Thus, for these values of $a_i$, the quiver $C(4,0)$ is again not mutation-acyclic by Corollary~\ref{cor:acyclic det inequality}.
\end{example}

\begin{example}
\label{ATP:eg:all M4=6}
As an application of Corollary~\ref{cor:markov finite}, we enumerate all of the acyclic $4$-vertex quivers with $M_Q \leq 6$ (up to isomorphism and mutation equivalence).
Because $M_Q \geq \sum_{i<j} b_{ij}^2$, we may assume $b_{ij} \leq 2.$
If $M_Q<3$ then $Q$ must be disconnected. 
There is only one quiver with $0$ or $1$ arrows.
There are two quivers with $M_Q=2$, corresponding to type $A_3$ (with an isolated vertex), and the union of two copies of $A_2$.
Up to mutation equivalence, there are two tree quivers on $4$-vertecies, corresponding to types $A_4, D_4$, both of which have $M_Q = 3$.
The only mutation class of connected acyclic quivers with $M_Q=4$ is $\tilde A(2,2)$. 
There are also the disconnected quivers corresponding to $v_1 \stackrel{2}{\rightarrow} v_2$ and $\tilde A(2,1)$ (with all other vertices isolated).
There are two mutation-classes of connected acyclic quivers $Q$ with $M_Q=5$, as well as two classes of disconnected quivers. 
Representitives are shown below:
{
\newcommand\cyclicVerts[5]{%
	\draw[black, dashed, decoration={markings, mark=at position 0 with {\arrow{<}}}, postaction={decorate}] (#4,#5) circle (0.8cm);
	\filldraw[black] (#4,#5)++(135:0.8cm) circle (2pt) node[above left=-1pt] {$v_1$} coordinate (v_1);
	\filldraw[black] (#4,#5)++(45:0.8cm) circle (2pt) node[above right=-1pt] {$#1$} coordinate (#1);
	\filldraw[black] (#4,#5)++(-45:0.8cm) circle (2pt) node[below right=-1pt] {$#2$} coordinate (#2);
	\filldraw[black] (#4,#5)++(-135:0.8cm) circle (2pt) node[below left=-1pt] {$#3$} coordinate (#3);
}
$$\includegraphics[alt={Four COQs, each with cyclic ordering (v1, v2, v3, v4). The leftmost has arrows from v1 to v2, v2 to v3, v3 to v4, and v1 to v4. The next has an arrow from v1 to v2, v2 to v3, v3 to v4, and v1 to v3. The next has 2 arrows from v1 to v2, and one arrow from v3 to v4. The rightmost has 2 arrows from v1 to v2, and one arrow from v3 to v4.}]{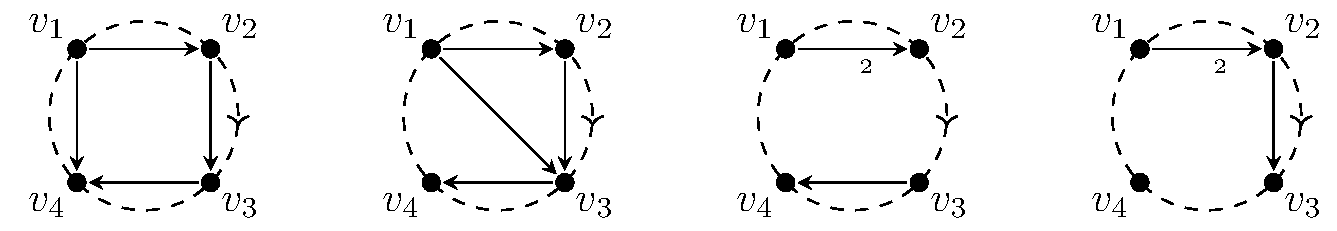}$$

Any 4-vertex acyclic quiver with $M_Q=6$ must be connected. 
A quiver from each mutation-acyclic mutation-class with $M_Q=6$ are shown below.

$$\includegraphics[alt={Three COQs, each with cyclic ordering (v1,v2,v3,v4). The leftmost has 2 arrows from v1 to v2, and just one arrow from v2 to v3, and v3 to v4. The middle COQ has one arrow from v1 to v2 and v3 to v4, and two arrows from v2 to v3. The last has 2 arrows from v1 to v2, and just one arrow from v1 to v3 and v1 to v4.}]{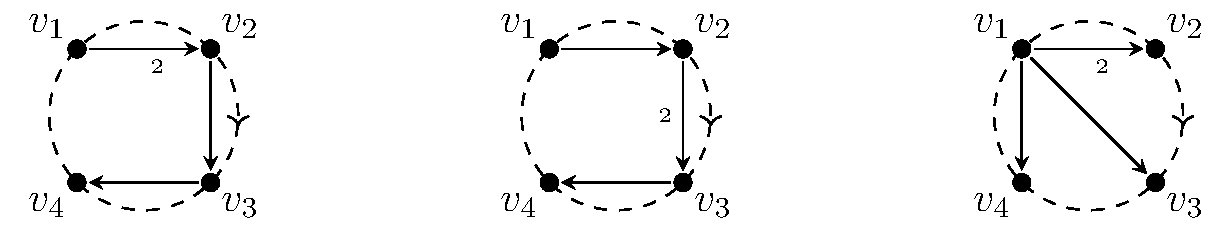}$$}

Thus the complete list of possible Alexander polynomials of mutation-acyclic quivers with~${M_Q \leq 6}$ is:

{
\renewcommand{\arraystretch}{1.2} 
\noindent
$\begin{array}{l l}
(t-1)^4 & 
 (t-1)^4 + 5t(t-1)^2 
 \\
(t-1)^4 + t(t-1)^2 &
 (t-1)^4 + 5t(t-1)^2 + t^2 
 \\
(t-1)^4 + 2t(t-1)^2 
& 
 (t-1)^4 + 5t(t-1)^2 + 4t^2 
\\
(t-1)^4 + 2t(t-1)^2 + t^2 
& 
 (t-1)^4 + 6t(t-1)^2 
\\
 (t-1)^4 + 3t(t-1)^2 
&  (t-1)^4 + 6t(t-1)^2 + t^2 
\\
 (t-1)^4 + 3t(t-1)^2 + t^2 \quad \quad 
& 
 (t-1)^4 + 6t(t-1)^2 + 4t^2 
\\
 (t-1)^4 + 4t(t-1)^2 
& \\
\end{array}
$
}

There is only one polynomial with a $4t(t-1)^2$ summand because all three mutation-classes with $M_Q=4$ have the same Alexander polynomial.
(As do two of the mutation-classes with $M_Q=5$.)
\end{example}

\hide{
\begin{figure}[ht]
{
\newcommand\cyclicVerts[5]{%
	\filldraw[black] (#4,#5)++(135:0.8cm) circle (2pt) node[above left=-1pt] {$v_1$} coordinate (v_1);
	\filldraw[black] (#4,#5)++(45:0.8cm) circle (2pt) node[above right=-1pt] {$#1$} coordinate (#1);
	\filldraw[black] (#4,#5)++(-45:0.8cm) circle (2pt) node[below right=-1pt] {$#2$} coordinate (#2);
	\filldraw[black] (#4,#5)++(-135:0.8cm) circle (2pt) node[below left=-1pt] {$#3$} coordinate (#3);
	\draw[black, dashed, decoration={markings, mark=at position 0 with {\arrow{<}}}, postaction={decorate}] (#4,#5) circle (0.8cm);
}
\includegraphics{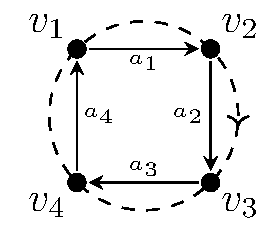}
}

\caption{A $4$-cycle quiver.}
\label{fig:4cycle}
\end{figure}
}

\begin{example}
\label{eg:extended not acyclic}
Consider the quivers associated with the elliptic root system of double extended type $E_7$ or $E_8$, denoted $E_7^{(1,1)}$ or $E_8^{(1,1)}$ and shown in Figure~\ref{fig:Extended78}. 
(Note that the quiver in Figure~\ref{fig:Extended8} is mutation equivalent to $E_8^{(1,1)}$.)
These quivers have finite mutation classes, and can be shown to be totally proper by exhaustive computation. 
(We emphasize the exhaustion; $E_8^{(1,1)}$ has $5739$ quivers, up to isomorphism, in its class.)
These quivers are not mutation-acyclic.
This has been shown by brute force, or by representation theory \cite[Section 2.3]{KellerE8}.
We sketch a new proof.

The canonical candidate cyclic orderings $\sigma_{E_7^{(1,1)}}$ and $\sigma_{E_8^{(1,1)}}$ for the quivers shown in Figure~\ref{fig:Extended8} are $(v_1, \ldots, v_9)$ and $(v_1, \ldots, v_{10})$ respectively. 
(While $(E_7^{(1,1)}, \sigma_{E_7^{(1,1)}})$ and $(E_8^{(1,1)}, \sigma_{E_8^{(1,1)}})$ are totally proper, we do not rely on this fact.)
So we treat $E_7^{(1,1)}$ and $E_8^{(1,1)}$ as COQs with their respective cyclic ordering.
Their Markov invariants are:
$$M_{E_7^{(1,1)}} = 10 + 4 - 6 = 8,\quad M_{E_8^{(1,1)}} = 11 + 4 - 6 = 9.$$
By Corollary~\ref{cor:markov finite}, the only acyclic quivers that $E_7^{(1,1)}$ (resp., $E_8^{(1,1)}$) could be mutation equivalent to are tree quivers (with no parallel arrows).

Up to isomoprhism, there are $47$ (unoriented) trees with $9$ vertices and $106$ trees with $10$ vertices. 
All orientations of each of these trees are mutation equivalent.
By Theorem~\ref{th:wiggle/winding}, all cyclic orderings of a tree are wiggle equivalent.
Thus any quiver mutation equivalent to a tree on $9$ (resp., $10$) vertices must have one of the $47$ (resp., $106$) Alexander polynomials. (Actually the list is slightly smaller, as some non-isomorphic trees have the same Alexander polynomial.)

So it suffices to check if 
\begin{align*}
\Delta_{E_7^{(1,1)}}(t) &= t^9 - t^8 - 2t^5 + 2 t^4 + t - 1, \text{ and} \\
\Delta_{E_8^{(1,1)}}(t) &= t^{10} - t^9 + t^7 - t^6 - t^4 + t^3 - t + 1 
\end{align*}
appear in these lists.
A (still tedious, but straightforward and human verifiable) computation shows that they are not.
The computation can be further simplified using the work of Amanda Schwartz \cite{SchwartzTree}.
\end{example}

\begin{figure}[ht]
{
\newcommand\spacingh{1.5cm}
\newcommand\hgtOne{1cm}
\newcommand\hgtTwo{-1cm}
\includegraphics[alt={A nine vertex quiver with vertices v1, ..., v9 and arrows as follows: 2 arrows from v5 to v1, one arrow from v1 to v2, v2 to v5, v1 to v3, v3 to v5, v1 to v4, v4 to v5, v3 to v6, v6 to v7, v4 to v8, and v8 to v9. Thus there are oriented three cycles on v5, v1, and any one of v2, v3, or v4.}]{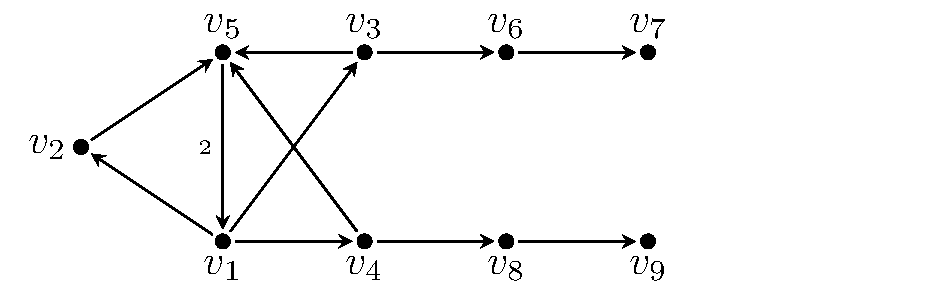}

\includegraphics[alt={A ten vertex quiver with vertices v1, ..., v10 and arrows as follows: 2 arrows from v5 to v1, one arrow from v1 to v2, v2 to v5, v1 to v3, v3 to v5, v1 to v4, v4 to v5, v3 to v6, v6 to v7, v7 to v8, v8 to v9, and v4 to v10. Thus there are oriented three cycles on v5, v1, and any one of v2, v3, or v4.}]{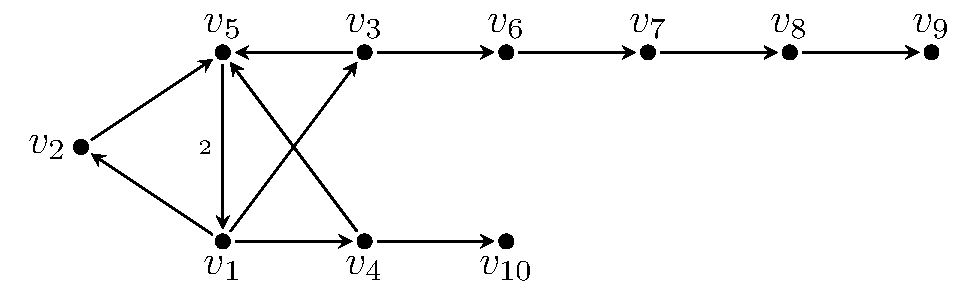}
}

\caption{The extended affine quivers $E_7^{(1,1)}$ and $E_8^{(1,1)}$.}
\label{fig:Extended78}
\end{figure}

\begin{example}
\label{eg:glue3vert}
Choose positive integers $a,b,c$.
Let $Q$ be the $4$-vertex COQ below:
{
\newcommand\cyclicVerts[5]{%
	\filldraw[black] (#4,#5)++(135:0.8cm) circle (2pt) node[above left=-1pt] {$v_1$} coordinate (v_1);
	\filldraw[black] (#4,#5)++(45:0.8cm) circle (2pt) node[above right=-1pt] {$#1$} coordinate (#1);
	\filldraw[black] (#4,#5)++(-45:0.8cm) circle (2pt) node[below right=-1pt] {$#2$} coordinate (#2);
	\filldraw[black] (#4,#5)++(-135:0.8cm) circle (2pt) node[below left=-1pt] {$#3$} coordinate (#3);
	\draw[black, dashed, decoration={markings, mark=at position 0 with {\arrow{<}}}, postaction={decorate}] (#4,#5) circle (0.8cm);
}
$$\includegraphics[alt={A four vertex COQ with cyclic ordering (v1, v2, v3, v4) and arrows as follows: a arrows from v1 to v2; b arrows from v2 to v3 and from v2 to v4; c arrows from v3 to v1 and v4 to v1.}]{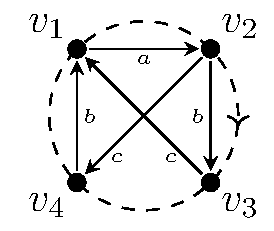}$$}
\vspace{-15pt}

Then $M_Q = a^2 + 2 b^2 + 2c^2 - 2 abc.$
Let $Q'$ be the subquiver supported by $v_1, v_2,v_3$, thus $M_{Q'} = a^2 + b^2 + c^2 - abc.$
Note that $M_Q = 2 M_{Q'} - a^2.$
Thus $M_Q < 0$ whenever~$M_{Q'} < \frac{a^2}{2}$.
(This last inequality is generally satisfied for mutation-infinite quivers; as $M_{Q'}$ is constant on the mutation class, but the number of arrows grows arbitrarily large, we can find many quivers with some weight $a$ much larger than $M_{Q'}$.)
Thus by Corollary~\ref{cor:acyclic positive}, such $Q$ cannot be mutation-acyclic.
\end{example}

\hide{ 
\begin{remark}
The COQ $Q$ constructed in Example~\ref{eg:glue3vert} can be constructed in another way. 
Start with any $3$-vertex quiver $Q'$; create a copy with all arrows reversed; pick an arrow $v \rightarrow w$ in $Q'$ and identify $v$ in $Q'$ with $w$ in the copy, and likewise $w$ in $Q'$ with $v$ in the copy.
In general, whenever we have two quivers with a common subquiver we can identify that common subquiver to create a larger quiver.
The Markov invariant of the resulting quiver will contain all the terms from both their respective Markov invariants.
Generally giving fairly simple inequalities for when the result must not be mutation-acyclic.
\end{remark}
} 

\begin{example}
\label{eg:glueeven}
We next illustrate a way of combining Corollary~\ref{cor:markov finite} with another mutation invariant \cite{Seven3x3}, the multiset of GCDs of all the integers in the columns (or rows) of the $B$-matrix. 
We construct a COQ as in Example~\ref{eg:glue3vert} with $a=b=2$ and $c=4$.
Thus $M_{Q} = 12 \geq 3$, and we cannot use Corollary~\ref{cor:acyclic positive} to conclude that $Q$ is not mutation-acyclic.
However, note that the GCD of the entries of $B_Q$ is $2$.
Thus if $Q$ is mutation-acyclic, it must be mutation equivalent to a connected acyclic quiver with the same GCD and $M_Q=12$.
There are two such quivers, both trees with $2$ arrows between each pair of neighboring vertecies.
The $B$-matrices of these acyclic quivers have determinant $0$ and $16$, but $\det(B_Q)=144$. 
Thus $Q$ is not mutation-acyclic.
\end{example}

It is natural to ask when the Alexander polynomial is unimodal (say up to the signs of the coefficients).
If we make no assumptions on the quiver, then the general answer seems complicated (note that the Alexander polynomial of the quivers in Example~\ref{eg:extended not acyclic} are not).
We give two examples from acyclic quivers. 

\begin{example}
\label{eg:not unimodal}
Let $Q$ be the COQ shown below:
{
\newcommand\cyclicVerts[5]{%
	\filldraw[black] (#4,#5)++(135:0.8cm) circle (2pt) node[above left=-1pt] {$v_1$} coordinate (v_1);
	\filldraw[black] (#4,#5)++(45:0.8cm) circle (2pt) node[above right=-1pt] {$#1$} coordinate (#1);
	\filldraw[black] (#4,#5)++(-45:0.8cm) circle (2pt) node[below right=-1pt] {$#2$} coordinate (#2);
	\filldraw[black] (#4,#5)++(-135:0.8cm) circle (2pt) node[below left=-1pt] {$#3$} coordinate (#3);
	\draw[black, dashed, decoration={markings, mark=at position 0 with {\arrow{<}}}, postaction={decorate}] (#4,#5) circle (0.8cm);
}
$$\includegraphics[alt={A COQ with cyclic ordering (v1,v2,v3,v4) and 2 arrows each between v1 and v2, v2 and v3, and v1 and v3, as well as 4 arrows from v3 to v4.}]{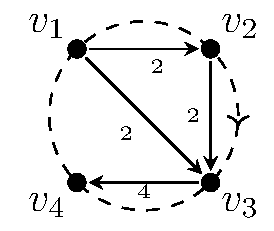}$$}

Then $\Delta_Q(t) = t^4 + 32 t^3 - 2 t^2 + 32t + 1$ is not unimodal.
\end{example}

\begin{example}
\label{eg:bipartite acyclic}
Consider the COQ $Q$ below:
{
\newcommand\cyclicVerts[5]{%
	\draw[gray, dashed, decoration={markings, mark=at position 0 with {\arrow{<}}}, postaction={decorate}] (#4,#5) circle (0.8cm);
	\filldraw[black] (#4,#5)++(135:0.8cm) circle (2pt) node[above left=-1pt] {$v_1$} coordinate (v_1);
	\filldraw[black] (#4,#5)++(45:0.8cm) circle (2pt) node[above right=-1pt] {$#1$} coordinate (#1);
	\filldraw[black] (#4,#5)++(-45:0.8cm) circle (2pt) node[below right=-1pt] {$#2$} coordinate (#2);
	\filldraw[black] (#4,#5)++(-135:0.8cm) circle (2pt) node[below left=-1pt] {$#3$} coordinate (#3);
}
$$\includegraphics[alt={A COQ with cyclic ordering (v1,v2,v3,v4). There are a1 arrows from v1 to v3, a2 from v2 to v3, a3 from v1 to v4, and a4 from v2 to v4.}]{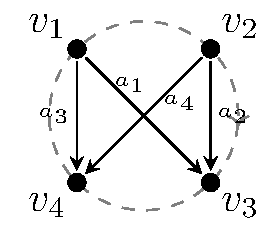}$$}
We compute the Alexander polynomial 
\begin{align*}
\Delta_Q(t) =& t^4 + (a_1^2 + a_2^2 + a_3^2 + a_4^2-4) t^3 \\
&+ (a_2^2a_3^2 - 2a_1a_2a_3a_4 + a_1^2a_4^2 - 2a_1^2 - 2a_2^2 - 2a_3^2 - 2a_4^2 + 6) t^2 \\
&+ (a_1^2 + a_2^2 + a_3^2 + a_4^2-4) t + 1 \\
=& (t-1)^4 + (a_1^2 + a_2^2 + a_3^2 + a_4^2) t (t-1)^2 + (a_2 a_3-a_1a_4)^2 t^2.\\
\end{align*}
A polynomial is \emph{$\gamma$-positive} if it can be expressed as $\sum_i \gamma_i t^i (t+1)^{n-2i}$ for some nonnegative coefficients $\gamma_i$ \cite{AGammaPositivity}.
Our second expression above is almost of this form, except that we have $(t-1)$ instead of $(t+1)$. 
It is not hard to show that $\Delta_Q(t)$ will be unimodal (indeed, $\gamma$-positive) when $64 < 4(a_1^2 + a_2^2 + a_3^2 + a_4^2) < (a_2 a_3-a_1a_4)^2$.
\end{example}

Finally, we note that Lemmas~\ref{lem:mutates like B v1gen}~\ref{lem:mutates like B v2gen} extend to the admissible quasi-Cartan companion $A_U= U + U^T$ (discussed in \cite{COQ}[Remark~16.2]). 

\begin{corollary}
\label{cor:A mutates like B}
Fix a totally proper COQ $Q$ and a proper vertex $v_j$.
Let $U_Q$ be a unipotent companion with linear order $<$ so that $v_i < v_j$ (resp., $v_j < v_i$) for all $v_i \in \In(v_j)$ (resp., $\Out(v_j)$).
Then $M_Q(v_j, \epsilon) (U_Q + U_Q^T) M_Q(v_j, \epsilon)$ 
is an admissible quasi-Cartan companion of $\mu_{v_j}(Q)$ when $\epsilon = -1$ (resp., $1$).
\end{corollary}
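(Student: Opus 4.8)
The plan is to reduce the statement to Lemmas~\ref{lem:mutates like B v1gen} and~\ref{lem:mutates like B v2gen} together with the description of the quasi-Cartan companion $A_U=U+U^T$ recorded in {\cite[Remark~13.2]{COQ}}. First I would split into the two cases by symmetry. For $\epsilon=-1$, the hypothesis that $v_i<v_j$ for all $v_i\in\In(v_j)$ is precisely the hypothesis of Lemma~\ref{lem:mutates like B v1gen}; for $\epsilon=1$, the hypothesis that $v_j<v_i$ for all $v_i\in\Out(v_j)$ is precisely the hypothesis of Lemma~\ref{lem:mutates like B v2gen}. In either case the relevant lemma tells us that
$$U':=M_Q(v_j,\epsilon)\,U_Q\,M_Q(v_j,\epsilon)^T$$
is a unipotent companion of $\mu_{v_j}(Q)$.

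Next I would observe that transposition commutes with the congruence $X\mapsto M_Q(v_j,\epsilon)\,X\,M_Q(v_j,\epsilon)^T$, so that
$$M_Q(v_j,\epsilon)\,(U_Q+U_Q^T)\,M_Q(v_j,\epsilon)^T \;=\; U'+(U')^T.$$
Thus the matrix in the statement is exactly the quasi-Cartan companion $A_{U'}=U'+(U')^T$ attached to the unipotent companion $U'$ of $\mu_{v_j}(Q)$. Since $Q$ is totally proper and $v_j$ is a proper vertex, $\mu_{v_j}(Q)$ is again totally proper, so {\cite[Remark~13.2]{COQ}} applies: for a totally proper COQ, the matrix $U+U^T$ built from any unipotent companion $U$ is an admissible quasi-Cartan companion of the underlying quiver. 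Applying this to $\mu_{v_j}(Q)$ and $U'$ completes the argument.

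I do not anticipate a serious obstacle here: the only input beyond bookkeeping is the admissibility of $A_{U'}$, which is already isolated in {\cite[Remark~13.2]{COQ}}. Should a self-contained argument be preferred, I would instead verify admissibility directly via the winding-number formula \eqref{eq:wind formula}: along a chordless cycle $\mathcal{O}=(w_0-\cdots-w_\ell=w_0)$ in $K_{\mu_{v_j}(Q)}$, an off-diagonal entry $a_{w_iw_{i+1}}=u'_{w_iw_{i+1}}+u'_{w_{i+1}w_i}$ of $A_{U'}$ is positive exactly when the corresponding arrow ``wraps around'' the linear order in which $U'$ is upper triangular, so the number of positive entries along $\mathcal{O}$ has the same parity as $\wind_\sigma(\mathcal{O})$; admissibility then amounts to the fact that in a (totally) proper COQ this winding number is odd precisely for the oriented chordless cycles. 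The one point needing care is matching each value of $\epsilon$ with the correct one of the two lemmas and with the correct half of the ordering hypothesis.
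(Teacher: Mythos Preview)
Your proposal is correct and matches the paper's intended argument: the paper does not give an explicit proof but signals in the preceding sentence that the corollary follows from Lemmas~\ref{lem:mutates like B v1gen} and~\ref{lem:mutates like B v2gen} together with \cite[Remark~13.2]{COQ}, which is exactly the route you take. Your additional sketch of a direct winding-number verification of admissibility is a nice bonus but not needed.
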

\vspace{-0.4cm}

\end{document}